\documentclass[10pt]{amsart}

\usepackage{amsmath, amscd, amssymb, euler}
\usepackage[frame,cmtip,arrow,matrix,line,graph,curve]{xy}
\usepackage{graphpap, color}
\usepackage[mathscr]{eucal}

\numberwithin{equation}{section}

\newcommand{\CC}{\mathbb{C}}

\newcommand{\PP}{\mathbb{P}}

\newcommand{\ZZ}{\mathbb{Z}}


\newcommand{\bP}{\mathbf{P}}
\newcommand{\bX}{\mathbf{X}}
\newcommand{\bS}{\mathbf{S}}
\newcommand{\bM}{\mathbf{M}}
\newcommand{\bH}{\mathbf{H}}
\newcommand{\bC}{\mathbf{C}}


\newcommand{\cal}{\mathcal}

\def\cC{{\cal C}}

\def\cE{{\cal E}}
\def\cF{{\cal F}}
\def\cH{{\cal H}}

\def\cM{{\cal M}}

\def\cO{{\cal O}}

\def\cS{{\cal S}}

\def\cU{{\cal U}}

\def\cZ{{\cal Z}}
\def\cI{{\cal I}}







\def\mapright#1{\,\smash{\mathop{\lra}\limits^{#1}}\,}






\def\lra{\longrightarrow}



\def\begeq{\begin{equation}}
\def\endeq{\end{equation}}
\def\and{\quad{\rm and}\quad}

\def\and{\quad\text{and}\quad}
\def\mapright#1{\,\smash{\mathop{\lra}\limits^{#1}}\,}


\DeclareMathOperator{\Ext}{Ext} 

\DeclareMathOperator{\Hom}{Hom} 
 
 \DeclareMathOperator{\id}{id}

\DeclareMathOperator{\Sym}{Sym} 



\newtheorem{prop}{Proposition}[section]
\newtheorem{theo}[prop]{Theorem}
\newtheorem{lemm}[prop]{Lemma}
\newtheorem{coro}[prop]{Corollary}
\newtheorem{rema}[prop]{Remark}
\newtheorem{exam}[prop]{Example}
\newtheorem{defi}[prop]{Definition}


\let\lab=\label

\def\ev{\mathrm{ev}}

\def\PP{\mathbb{P}}
\def\CC{\mathbb{C}}

\def\lra{\longrightarrow}
\def\mapright#1{\,\smash{\mathop{\lra}\limits^{#1}}\,}
\def\cO{\mathcal{O}}

\def\Spec{\mathrm{Spec}}
\def\git{/\!/ }

\title{Hilbert scheme of rational cubic curves via stable maps}

\author{Kiryong Chung}
\address{Department of Mathematics, Seoul National University, Seoul 151-747, Korea}
\email{dragon10@snu.ac.kr}

\author{Young-Hoon Kiem}
\address{Department of Mathematics and Research Institute
of Mathematics, Seoul National University, Seoul 151-747, Korea}
\email{kiem@math.snu.ac.kr}

\date{}

\thanks{Partially supported by KOSEF grant
R01-2007-000-20064-0.}

\begin{document}
\begin{abstract}
The space of smooth rational cubic curves in projective space
$\PP^r$ ($r\ge 3$) is a smooth quasi-projective variety, which gives
us an open subset of the corresponding Hilbert scheme, the moduli
space of stable maps, or the moduli space of stable sheaves. By
taking its closure, we obtain three compactifications $\bH$, $\bM$,
and $\bS$ respectively. In this paper, we compare these
compactifications. First, we prove that $\bH$ is the blow-up of
$\bS$ along a smooth subvariety which is the locus of stable sheaves
which are planar (i.e. support is contained in a plane). Next we
prove that $\bS$ is obtained from $\bM$ by three blow-ups followed
by three blow-downs and the centers are described explicitly. Using
this, we calculate the cohomology of $\bS$.
\end{abstract}
\maketitle

\section{Introduction}

Let $\bX_0$ be the space of smooth rational curves of
degree $d$ in $\PP^r$. It is easy to see that $\bX_0$ is a smooth
quasi-projective variety of dimension $(r+1)(d+1)-4$. From moduli
theoretic point of view, the following questions are quite natural.
\begin{enumerate}
\item Does $\bX_0$ admit a moduli theoretic compactification?
\item If there are more than one such compactifications,
what are the relationships among them?
\item Can we calculate the differences of intersection numbers of
cycles coming from geometric conditions?\end{enumerate}

For the first question, there are several well-known
compactifications as we will review below. The purpose of this
paper is to provide answers to the second question for $d=3$. Note
that the second question is trivial when $d=1$ because $X_0$ is
compact. The case where $d=2$ has been worked out in \cite{kiem}:
The moduli space of stable maps $\cM_0(\PP^r,2)$ is Kirwan's
partial desingularization of the quasi-map space $\PP
(\Sym^2(\CC^2)\otimes \CC^{r+1})/\!/ SL(2)$ and the Hilbert scheme
$\cH ilb^{2m+1}(\PP^r)$ is obtained from $\cM_0(\PP^r,2)$ by a
blow-up followed by a blow-down. The third question is related to
the problem of comparing various curve counting invariants, such
as the Gromov-Witten invariant, Donaldson-Thomas invariant and
Pandharipande-Thomas invariant. A successful comparison of these
curve counting invariants may be achieved if the second question
is answered in a satisfactory fashion. For instance, if we can
describe the birational maps between two different
compactifications of $\bX_0$ in terms of blow-ups and -downs whose
centers are themselves moduli spaces for lower degree curves, then
it is quite plausible that the differences of the curve counting
invariants may be expressed as inductive formulae. In this paper
we first review several natural moduli theoretic compactifications
and then compare these compactifications in terms of explicit
blow-ups and -downs.

In \S2, we review several natural moduli theoretic compacticiations
of $\bX_0$. The first compactification comes from geometric
invariant theory (GIT). A smooth rational cubic curve is given by a
map $f:\PP^1\to \PP^r$. If we choose homogeneous coordinates of
$\PP^1$ and $\PP^r$, $f$ is given by an $(r+1)$-tuple of homogeneous
polynomials of degree $d$ in two variables $z_0,z_1$. To remove the
dependency on the choice of homogeneous coordinates, we have to take
the quotient by the action of $Aut(\PP^1)=PGL(2)$. Hence, we obtain
a compactification by GIT quotient, often called the \emph{quasi-map
space}
\[
\bX=\PP(\Sym^3(\CC^2)\otimes \CC^{r+1})\git SL(2).
\]
The strength of this
compactification $\bX$ is that it is easy to calculate the
cohomology ring or Chow ring or K-groups by using the equivariant
Morse theory, or the Atiyah-Bott-Kirwan theory \cite{K2}. The
weakness of $\bX$ is that the boundary points do not have natural
geometric meaning.

The second compactification is obtained from the Hilbert scheme.
We have the obvious embedding $\bX_0\hookrightarrow \cH
ilb^{3m+1}(\PP^r)$ of $\bX_0$ into the Hilbert scheme of closed
subschemes with Hilbert polynomial $3m+1$. This turns out to be an
open immersion and by taking its closure we obtain a
compactification $\bH$ of $\bX_0$, which we call the \emph{Hilbert
compactificaiton}.

The third compactification comes from Kontsevich's moduli space of
stable maps. A stable map is a morphism of a connected nodal curve
$f:C\to \PP^r$ with finite automorphism group. Let
$\cM_0(\PP^r,d)$ denote the moduli space of stable maps of
arithmetic genus $0$ and degree $d$. It is well-known that this is
an irreducible normal projective scheme. The obvious inclusion
$\bX_0\hookrightarrow \cM_0(\PP^r,3)$ is an open immersion and
$\bM=\cM_0(\PP^r,3)$ is a compactification of $\bX_0$, which we
call the \emph{Kontsevich compactification}. In \cite{KM}, we
proved that $\bM$ is obtained from $\bX$ by three blow-ups and two
blow-downs and the blow-up/-down centers are explicitly described
in terms of moduli spaces of stable maps of degrees 1 and 2.
\begin{theo}\lab{thm1.2} \cite{KM}
The birational map $\bX\dashrightarrow \bM$ is the composition of
three blow-ups followed by two blow-downs. The blow-up centers are
respectively, $\PP^{r}$, $\cM_{0,2}(\PP^{r},1)/S_2$ (where $S_2$
interchanges the two marked points) and the blow-up of
$\cM_{0,1}(\PP^{r},2)$ along the locus of three irreducible
components. The centers of the blow-downs are respectively the
$S_2$-quotient of a $(\PP^{r})^2$-bundle on $\cM_{0,2}(\PP^{r},1)$
and a $(\PP^{r-1})^3/S_3$-bundle on $\PP^{r}$. Here
$\cM_{0,k}(\PP^r,d)$ denotes the moduli space of stable maps of
genus 0 and degree $d$ with $k$ marked points.
\end{theo}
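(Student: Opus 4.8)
\medskip
\noindent\emph{Proof proposal.} The plan is to factor the birational map $\bX\dashrightarrow\bM$ through Kirwan's partial desingularization of the GIT quotient $\bX$: first blow up $\bX$ along the loci of semistable points whose closed orbit has non-minimal stabilizer, in decreasing order, which resolves the indeterminacy of $\bX\dashrightarrow\bM$; then contract the exceptional divisors carrying data not recorded by a stable map, arriving at $\bM$. Since $\bX$ and $\bM$ are irreducible and both contain $\bX_0$ as a dense open subset, it is enough to analyze these modifications over the boundary.

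First I would stratify $\PP(\Sym^3\CC^2\otimes\CC^{r+1})^{ss}$ by the greatest common divisor of the $(r+1)$-tuple of binary cubics $(f_0,\dots,f_r)$ representing a point: semistability is equivalent to this $\gcd$ having no repeated linear factor, and its degree --- the length of the base scheme of the associated quasi-map --- together with the multiplicity pattern determines the stabilizer. The non-free strata are: $P_3$, where all $f_i$ are proportional (base scheme of length $3$), which is $\cong\PP^{r}$ and consists of orbifold points of $\bX$; $P_2$, where the $\gcd$ has degree $2$, which maps to $\cM_{0,2}(\PP^{r},1)/S_2$ (a line with its two unordered base points); and $P_1$, where the $\gcd$ has degree $1$, so the cofactor is a degree-$2$ quasi-map --- after the earlier blow-ups, and an auxiliary blow-up along the locus of three irreducible components, $P_1$ becomes the blow-up of $\cM_{0,1}(\PP^{r},2)$ appearing in the statement. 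Luna's slice theorem supplies, near a point of each stratum, an \'etale-local model $W\git H$ with $W$ an explicit representation of (the reductive part of) the stabilizer $H$, from which one verifies smoothness of each successive center and computes its normal bundle.

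I would then blow up $\bX$ along $P_3\cong\PP^{r}$, along the proper transform of $P_2$, and along the proper transform of $P_1$ after its auxiliary blow-up; the local models show this resolves the indeterminacy, giving a morphism $\widetilde{\bX}\to\bM$. Next I would identify, among the three exceptional divisors, the two with positive-dimensional fibres over $\bM$: the divisor over $P_2$ contracts onto the $S_2$-quotient of a $(\PP^{r})^{2}$-bundle over $\cM_{0,2}(\PP^{r},1)$ --- the $(\PP^{r})^{2}$ recording the two node images that a stable map keeps but the quasi-map forgets --- and the divisor over $P_3$ contracts onto a $(\PP^{r-1})^{3}/S_3$-bundle over $\PP^{r}$, recording the three lines through the common point; performing these two contractions in this order lands on $\bM$, the third exceptional divisor surviving as the degree-$(2+1)$ boundary divisor. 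Finally one checks that the composite birational map really is $\bX\dashrightarrow\bM$ by comparing the universal curves over $\bX_0$ and extending over the divisors.

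The main obstacle is globalizing the \'etale-local Luna-slice analysis: one must check that the local pictures glue to blow-ups along genuinely \emph{smooth} centers (and control the quotient singularities of $\cM_{0,2}(\PP^{r},1)/S_2$), and, more delicately, that each of the two contracted exceptional divisors admits a global fibration, with fibre $(\PP^{r})^{2}/S_2$ or $(\PP^{r-1})^{3}/S_3$, over the relevant lower-degree moduli space, so that the contractions exist globally. The third center is the hardest point, since $P_1$ is a smooth blow-up center only after the auxiliary modification built from \cite{kiem}, and one must keep track of how its exceptional divisor meets those of the first two blow-ups; the bookkeeping of proper transforms and of the orbifold structure throughout is where the real work lies.
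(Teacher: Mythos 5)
Your operational plan --- stratify the stable locus of $\PP(\Sym^3(\CC^2)\otimes\CC^{r+1})$ by the degree of $\gcd(f_0,\dots,f_r)$, blow up the three strata starting from the deepest, build a morphism to $\bM$, then contract two exceptional divisors in the stated order --- is exactly the route of \cite{KM} as recalled in \S 4.2 of this paper (the loci $\Sigma^3$, $\bar\Sigma^2$, $\bar\Sigma^1$ of three, at least two, at least one common zeros). But your opening rationale is wrong and, taken literally, would select the wrong centers. For $d=3$ the torus weights on $\Sym^3(\CC^2)$ are $\pm 3,\pm 1$, so semistable equals stable and every stabilizer is finite; Kirwan's partial desingularization is vacuous here, and ``non-minimal stabilizer'' does not characterize the blow-up loci: a generic point of the degree-one-gcd stratum ($f_i=l\,h_i$ with the $h_i$ spanning $\Sym^2(\CC^2)$) has trivial stabilizer yet must be blown up, while $(z_0^3:z_1^3:0:\cdots:0)$ has stabilizer $\mu_3$ yet lies in no center. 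The correct criterion is that these strata are precisely the quasi-maps with base points, i.e.\ where the evaluation to $\PP^r$ --- hence the rational map to $\bM$ --- is undefined; stabilizers play no role in choosing the centers.

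Two further points. First, the construction in \cite{KM} performs all blow-ups equivariantly upstairs on the smooth stable locus $\bP_0$ and quotients by $SL(2)$ only at the end (blowing up an invariant smooth center commutes with the geometric quotient, \cite[Lemma 3.11]{K4}); this dissolves the Luna-slice gluing and orbifold worries you list as your main obstacle, and it is also how one verifies that the two contracted divisors are globally a $\PP^1$-bundle and a $\PP^2$-bundle upstairs with $\cO(-1)$ normal bundle along the fibers. Second, the sentence ``the local models show this resolves the indeterminacy, giving a morphism $\widetilde{\bX}\to\bM$'' hides the substantive step: after the three blow-ups one must actually exhibit a flat family of stable maps over $\bP_3$, which is done by modifying the universal evaluation along the exceptional divisors (the analogue of the elementary modifications of \S 4 of this paper); without that there is no morphism to $\bM$. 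A smaller gloss to repair: the fiber over a point of $\cM_{0,2}(\PP^r,1)$ in the first blow-down center does not record ``the two node images'' --- those are already determined by the line and its two marked points --- but rather the two degree-one tails attached at those points, which is the data the quasi-map forgets.
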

The following diagram explains how we get $\bM$ from $\bX$ by
explicit blow-ups and -downs.
\begin{equation}\lab{diagramKM}
\xymatrix{ \bP_3\ar[r]^{\pi_3}\ar[d]_{\pi_4}\ar[dr]^{p_3}&
\bP_2\ar[r]^{\pi_2}&\bP_1\ar[r]^{\pi_1}&\bP_0\ar[d]\\
\bP_4\ar[d]_{\pi_5} &\bP_3/SL(2)\ar[d]\ar[rr]\ar[drr]^{\bar\psi_3}
&&\bP_0/SL(2)=\bX\ar@{.>}[d]^{\bar\psi_0}\\
\bP_5\ar[r]^{p_5}& \bP_5/SL(2)\ar[rr]^{\cong}_{\bar\psi_5} &&
\cM_0(\PP^r,3)=\bM }
\end{equation}
Here $\bP_0$ denotes the stable part of $\PP(\Sym^3(\CC^2)\otimes
\CC^{r+1})$. Using this theorem, we could calculate the cohomology
ring and the Picard group of the Kontsevich's moduli space of stable
maps $\cM_0(\PP^r,3)$.

\begin{rema}\emph{ Theorem \ref{thm1.2} holds true for any $r\ge 1$. When $r=1$,
$\pi_3$ is the identity map and $\pi_2$ cancels out $\pi_4$ while
$\pi_1$ cancels out $\pi_5$. Therefore, we have an isomorphism
$$\cM_0(\PP^1,3)\cong \PP(\Sym^3(\CC^2)\otimes \CC^{2})\git
SL(2).$$}
\end{rema}

The fourth compactification is by Simpson's moduli space of stable
sheaves. Recall that a coherent sheaf $E$ is \emph{pure} if any
nonzero subsheaf of $E$ has the same dimensional support as $E$. A
pure sheaf $E$ is called \emph{semistable} if \[
\frac{\chi(E(m))}{r(E)}\le \frac{\chi(E'(m))}{r(E')}\qquad \text{for
}m>>0
\]
for any nontrivial pure quotient sheaf $E'$ of the same dimension,
where $r(E)$ denotes the leading coefficient of the Hilbert
polynomial $\chi(E(m))$. We obtain \emph{stability} if $\le $ is
replaced by $<$. Simpson proved that there is a projective moduli
scheme $\cS imp^{P}(\PP^r)$ of semistable sheaves of given Hilbert
polynomial $P$. It is easy to see that semistability coincides with
stability when $P(m)=3m+1$. If $C$ is a smooth rational cubic curve
in $\PP^r$, then the structure sheaf $\cO_C$ is a stable sheaf.
Hence we get an open immersion $\bX_0\hookrightarrow \cS
imp^{3m+1}(\PP^r)$. By taking the closure we obtain a
compactifiction $\bS$, which we call the \emph{Simpson
compactification}.

In \S3, we compare the Hilbert compactification $\bH$ with the
Simpson compactification $\bS$. For $r=3$, Freiermuth and Trautmann
proved in \cite{FT} that $\bH\cong \bS$. For arbitrary $r$, we prove
the following
\begin{prop}\lab{prop1.4}
There is a morphism $\bH\to \bS$ which is the blow-up along the
smooth locus of stable sheaves with planar support.
\end{prop}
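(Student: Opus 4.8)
The plan is to construct a morphism $\Phi\colon\bH\to\bS$ restricting to the identity on the common dense open $\bX_0$, and then to recognise $\Phi$ as the asserted blow-up via the universal property of blowing up. Let $\cZ\subset\PP^r\times\bH$ be the universal subscheme, so that $\cO_\cZ$ is flat over $\bH$ with Hilbert polynomial $3m+1$. Over the open locus of arithmetically Cohen--Macaulay (CM) subschemes --- which contains $\bX_0$ --- each fibre $\cO_Z$ is pure and stable, so there $\cO_\cZ$ already induces a morphism to $\bS$ by the coarse moduli property. The non-CM points of $\bH$ form a divisor $D$: these are the subschemes $Z=C_0\cup(\text{embedded point})$ with $C_0$ a plane cubic (singular), the embedded point sitting at the singular point of $C_0$ and its extra infinitesimal direction pointing out of the spanning plane (compare the Piene--Schlessinger type description recalled in \S2--3). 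Over $D$ the sheaf $\cO_Z$ has a length-one torsion subsheaf, hence is impure, and cannot be pushed forward. The idea is to replace $\cO_\cZ$ by a flat family $\cF$ of \emph{pure} sheaves over all of $\bH$: using the universal directed-embedded-point datum along $D$ one performs a canonical elementary modification, arranging the fibre over $Z\in D$ to be the torsion-free rank-one sheaf $F_0=\nu_*\cO_{\tC_0}$ obtained by pushing forward the structure sheaf of the partial normalisation $\nu\colon\tC_0\to C_0$ at the singular point of $C_0$ (so $\chi(F_0)=1$). One checks $\cF$ is flat, restricts over $\bX_0$ to the tautological family of structure sheaves of smooth rational cubics, and has every fibre stable (for $F_0$ this is automatic, a torsion-free rank-one sheaf on an integral curve being stable, and the reducible degenerations are handled similarly); hence $\cF$ induces the desired $\Phi\colon\bH\to\bS$.

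Next one analyses the fibres of $\Phi$. Over a stable sheaf $F\in\bS$ with non-planar support, $F=\cO_Z$ for a unique CM subscheme $Z\in\bH\setminus D$, and one checks $\Phi$ is an isomorphism there (e.g. by a tangent-space comparison). Let $\Sigma\subset\bS$ be the locus of stable sheaves with planar support; sending such a sheaf to (the plane it spans, together with its partial normalisation viewed as a stable map) identifies $\Sigma$ with the locus of planar degree-$3$ stable maps, i.e. an $\cM_0(\PP^2,3)$-bundle over the Grassmannian of $2$-planes in $\PP^r$; in particular $\Sigma$ is smooth of codimension $r-2$ and $\bS$ is smooth along it. For $F_0\in\Sigma$ supported on $C_0\subset P$, the fibre $\Phi\upmo(F_0)$ consists of the subschemes $C_0\cup(\text{embedded point at }\mathrm{Sing}(C_0)\text{ with direction }\xi)$ with $\xi\in\PP(N_{P/\PP^r}|_{\mathrm{Sing}(C_0)})\cong\PP^{r-3}$; thus $D=\Phi\upmo(\Sigma)$ and $D\to\Sigma$ is the projective bundle $\PP(N_{\Sigma/\bS})$. (For $r=3$ this fibre is a point, so $\Phi$ is an isomorphism, recovering \cite{FT}.)

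To identify $\Phi$ with the blow-up: since $\bH$ is smooth, $D$ is an effective Cartier divisor. One checks that the scheme-theoretic preimage $\Phi\upmo(\Sigma)$ equals $D$ with multiplicity one and that $\cO_\bH(-D)|_D\cong\cO_{\PP(N_{\Sigma/\bS})}(1)$ --- a normal-bundle computation relating $N_{D/\bH}$ to $N_{P/\PP^r}$ along $D$. By the universal property of blowing up, $\Phi$ then factors through a morphism $\bH\to\mathrm{Bl}_\Sigma\bS$ over $\bS$; this morphism is proper and birational (an isomorphism away from the exceptional divisors) and carries the exceptional divisor of $\bH$ isomorphically onto that of $\mathrm{Bl}_\Sigma\bS$, hence is an isomorphism by Zariski's main theorem. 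This gives $\bH\cong\mathrm{Bl}_\Sigma\bS$, the blow-up along the smooth planar locus.

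The technical heart --- and the step I expect to be the main obstacle --- is the construction of $\Phi$ across the non-CM divisor $D$: one must show that $\cO_\cZ$ admits the purification to a family $\cF$ that is genuinely \emph{flat} over all of $\bH$, which requires a careful local analysis near $D$ and is exactly where the out-of-plane direction of the embedded point enters (and where, for $r\ge4$, that direction is forgotten on passing to $\bS$ --- the source of the blow-up). Secondary difficulties are the smoothness statements --- of $\bH$ for $r\ge4$, of $\Sigma$, and of $\bS$ along $\Sigma$ --- and the normal-bundle identification in the final step.
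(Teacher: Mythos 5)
Your proposal is essentially the ``direct way'' that the paper itself sketches at the opening of \S3 and then deliberately sets aside: the authors note that one could purify $\cO_\cZ$ by an elementary modification along the non-CM divisor and then check that the resulting morphism $\bH\to\bS$ is a divisorial contraction, but they instead take ``a shorter path'' by reducing everything to the case $r=3$. Concretely, the paper forms the relative Hilbert scheme $\bH(\PP\cU)$ and relative Simpson space $\bS(\PP\cU)$ over $Gr(4,r+1)$, proves $\bH(\PP^r)\cong\bH(\PP\cU)$ via Zariski's main theorem (every subscheme in the Piene--Schlessinger list spans a unique $\PP^3$), imports the isomorphism $\bH(\PP^3)\cong\bS(\PP^3)$ of Freiermuth--Trautmann fibrewise to get $\bH(\PP\cU)\cong\bS(\PP\cU)$, and then shows that the pushforward map $\bS(\PP\cU)\to\bS(\PP^r)$ contracts the divisor $\Delta\cong\bS(\PP\cU')\times_{Gr(3,r+1)}\PP(\CC^{r+1}/\cU')$ along its $\PP^{r-3}$ fibres, verifying the normal bundle is $\cO_{\PP^{r-3}}(-1)$ on an explicit one-parameter family of twisted cubics degenerating into a plane. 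This outsources to \cite{PS} and \cite{FT} exactly the items you identify as the technical heart of your plan: the flatness and stability analysis across the non-CM locus, and the smoothness of $\bH$, of the planar locus, and of $\bS$ along it. Your route, if completed, is more self-contained and makes the modification geometry explicit; the paper's route is shorter but leans on the $\PP^3$ literature.

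Two specific claims in your sketch need repair. First, identifying the planar locus $\Sigma$ with an $\cM_0(\PP^2,3)$-bundle over $Gr(3,r+1)$ is not correct: stable maps whose image is a line or a pair of lines (the loci $\Gamma^1$, $\Gamma^2$ of \S4) map many-to-one to sheaves, so $\cM_0(\PP^2,3)$ is not isomorphic to the fibre of $\Sigma$. The correct statement, and the one the paper uses, is that $\Sigma$ is a $\bS(\PP^2)$-bundle over $Gr(3,r+1)$, where $\bS(\PP^2)$ is the locus of nonsplit extensions $0\to\cO_C\to F\to\CC_p\to 0$ with $C$ a singular plane cubic and $p$ a singular point of $C$; its smoothness is deduced from the smoothness of $\Delta(\PP^3)$ in \cite{FT}, not from a stable-maps description, so your codimension-$(r-2)$ and smoothness claims survive but need a different justification. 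Second, the assertion that the modified fibre over every $Z\in D$ is $\nu_*\cO_{\tC_0}$ is safe only for the generic (integral, one-nodal) $C_0$; for reducible, non-reduced, or worse-than-nodal plane cubics the limit is a nonsplit extension of a skyscraper by $\cO_{C_0}$ whose class is governed by the Kodaira--Spencer map, and identifying it --- and checking that it depends only on the image point of $\Sigma$ and not on the out-of-plane direction of the embedded point --- is precisely the local analysis you flag as the main obstacle. Neither issue breaks the architecture, but both sit on the critical path of your argument.
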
 When $r=3$, the locus of planar stable sheaves is a
divisor and hence we obtain an isomorphism $\bH\cong \bS$. One
direct way to prove this proposition is to construct a family of
stable sheaves parameterized by $\bH$. The structure sheaves of
the closed subschemes parameterized by $\bH$ are stable except
along the locus of planar cubic curves, which is a divisor. By
applying elementary modification, we obtain a family of stable
sheaves and thus a morphism from $\bH$ to $\bS$. Then one can
check that this is a blow-down. Another way to prove this is to
use a result of Freiermuth and Trautmann (\cite{FT}) which we
explain in \S3.

In \S4, we compare the Kontsevich compactification $\bM$ and the
Simpson compactification $\bS$. Let $f:C\to \PP^r$ be a stable
map. Then $f_*\cO_C$ is a coherent sheaf on $\PP^r$. The locus of
unstable sheaves turns out to consist of two irreducible
components $\Gamma^1\cup \Gamma^2$, where $\Gamma^1$ is the locus
of stable maps with linear image (i.e. the image is a line) while
$\Gamma^2$ is the locus of stable maps with bilinear image (i.e.
the image is the union of two lines). To resolve the indeterminacy
we first blow up along $\Gamma^1$ and apply elementary
modification with respect to the destabilizing subsheaves which we
define as the first nonzero terms in the Harder-Narasimhan
filtrations. Then the locus of unstable sheaves has still two
components: one is the proper transform of $\Gamma^2$ and the
other $\Gamma^3$ is a subvariety of the exceptional divisor of the
blow-up. We then blow up along $\Gamma^2$ and apply the elementary
modification with respect to the destabilizing subsheaves. Again
we blow up along $\Gamma^3$ and apply elementary modification.
After these three blow-ups, we obtain a family of stable sheaves
and hence a morphism to $\bS$. Then we study the geometry of the
exceptional divisors. It turns out that the exceptional divisor of
the second blow-up becomes a weighted projective bundle over a
variety and we can contract this divisor. Then the exceptional
divisor of the third blow-up becomes a weighted projective bundle
and we can contract this divisor. Finally we contract the
exceptional divisor of the first blow-up in a similar fashion. In
this local analysis, the main technique is the variation of GIT
quotients \cite{Thad, DH}. Then it is easy to check that the
morphism to $\bS$ factors through the blow-downs and the induced
map is bijective. By the generalized Riemann existence theorem
\cite[p442]{Hartshorne}, we deduce that all morphisms are
algebraic and the blown-down spaces are algebraic. So we obtain
the following
\begin{theo}\lab{mainthm1}
$\bS$ is obtained from $\bM$ by blowing up along $\Gamma^1$,
$\Gamma^2_1$, $\Gamma^3_2$ and then blowing down along $\Gamma^2_3$,
$\Gamma^3_4$, $\Gamma^1_5$, where $\Gamma^i_j$ denote the proper
transform of $\Gamma^i$ at the $j$th stage.
\end{theo}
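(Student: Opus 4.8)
The plan is to resolve the rational map $\bM\dashrightarrow\bS$, $[f:C\to\PP^r]\mapsto f_*\cO_C$, by a sequence of three blow-ups each accompanied by an elementary modification of the universal family of sheaves, and then to show that the resulting morphism onto $\bS$ factors as three successive blow-downs whose centers are the strata named in the statement; the contraction geometry is read off from an explicit local GIT analysis at each $\Gamma^i$. To begin, work on the moduli stack (or a local presentation) of $\bM=\cM_0(\PP^r,3)$, let $F:\cC\to\PP^r$ be the universal stable map over the universal curve, and let $\cE$ be the family of sheaves on $\PP^r$ parametrized by $\bM$ with $\cE|_{\PP^r\times[f]}=f_*\cO_C$. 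Each such sheaf is pure of dimension one with Hilbert polynomial $3m+1$, and by the analysis of \S4 it is Simpson-stable precisely away from $\Gamma^1\cup\Gamma^2$. Over $\Gamma^1$ (linear image) the Harder--Narasimhan filtration of $f_*\cO_C$ has a well-defined first nonzero term $\cF^1\subset\cE|_{\Gamma^1}$, and similarly over $\Gamma^2$ (bilinear image) a destabilizing subsheaf $\cF^2\subset\cE|_{\Gamma^2}$.

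First I would blow up $\bM$ along $\Gamma^1$ to obtain $\bM_1$ with exceptional divisor $D_1$, and perform the elementary modification of the pullback of $\cE$ along $D_1$ with respect to $\cF^1$, i.e.\ replace $\cE$ by the kernel of the composite $\cE\to\cE|_{D_1}\to(\cE|_{D_1})/\cF^1$. A local computation on the deformation space of the sheaves involved should show that over $\bM_1$ the non-stable locus now consists of the proper transform $\Gamma^2_1$ of $\Gamma^2$ together with one new component $\Gamma^3\subset D_1$ (the ``three lines'' locus). Repeating this — blow up $\Gamma^2_1$ and modify along the new divisor $D_2$ using $\cF^2$, then blow up $\Gamma^3_2$ and modify along $D_3$ — and re-examining the deformation model each time, one obtains $\bM_3=\mathrm{Bl}_{\Gamma^3_2}\mathrm{Bl}_{\Gamma^2_1}\mathrm{Bl}_{\Gamma^1}\bM$ carrying a family $\tilde\cE$ of everywhere Simpson-stable sheaves. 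By the coarse universal property of $\bS=\cS imp^{3m+1}(\PP^r)$ this induces a morphism $\phi:\bM_3\to\bS$ restricting to the identity on $\bX_0$.

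Next, near each stratum $\Gamma^i$ I would set up a single parameter space of the relevant extensions and first-order deformations, equipped with two natural linearizations — one adapted to stable maps, one to stable sheaves — and invoke the variation of GIT \cite{Thad,DH}: the two chambers differ by explicit weighted blow-ups and -downs, and this is what produces the bundle structures below. Concretely, on $\bM_3$ the proper transform of $D_2$ is a weighted projective bundle over $\Gamma^2_3$ and $\phi$ is constant along its fibers, so $\phi$ factors through the contraction $\bM_3\to\bM_4$ of this divisor onto $\Gamma^2_3$; on $\bM_4$ the proper transform of $D_3$ is again a weighted projective bundle, whose contraction onto $\Gamma^3_4$ yields $\bM_4\to\bM_5$; finally the proper transform of $D_1$ is a weighted projective bundle whose contraction onto $\Gamma^1_5$ yields $\bM_5\to\bM_6$ and an induced morphism $\bar\phi:\bM_6\to\bS$, well defined because the modified sheaf is constant on every contracted fiber. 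Tracking the isomorphism class of the sheaf along each fiber shows $\bar\phi$ is bijective.

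Since $\bS$ is normal and $\bar\phi$ is a bijective morphism, Zariski's main theorem gives an isomorphism $\bar\phi:\bM_6\mapright{\sim}\bS$. A priori the three contractions exist only as maps of complex (or algebraic) spaces; that $\bM_4,\bM_5,\bM_6$ are projective schemes and the maps are morphisms follows from the generalized Riemann existence theorem \cite[p.442]{Hartshorne}, using that $\bS$ is projective and $\phi$ is algebraic. Composing the three blow-ups with the three blow-downs then gives exactly the statement. I expect the main obstacle to be the contraction analysis of the third paragraph: one must pin down the explicit local model at each of the three strata — the governing extension/deformation space, the two linearizations, and the wall between them — and verify that the resulting weighted projective bundles and the three contractions are precisely as claimed, in particular that three blow-downs suffice and can be performed in the indicated order without the successive centers interfering. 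A secondary point is to confirm that the elementary modifications of the second paragraph create no new unstable locus beyond the claimed $\Gamma^3$, so that $\phi$ genuinely extends over all of $\bM_3$, which in turn requires keeping careful track of the proper transforms of the three mutually intersecting strata $\Gamma^1,\Gamma^2,\Gamma^3$ through all six stages.
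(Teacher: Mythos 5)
Your plan coincides with the paper's proof: the same three blow-ups along $\Gamma^1$, $\Gamma^2_1$, $\Gamma^3_2$ with elementary modifications along the Harder--Narasimhan destabilizing subsheaves, the same identification of the new unstable stratum $\Gamma^3$ inside the first exceptional divisor, the same variation-of-GIT local models producing the weighted projective bundle structures on $\Gamma^2_3$, $\Gamma^3_4$, $\Gamma^1_5$, and the same conclusion via constancy of the sheaf on contracted fibers, bijectivity, and Zariski's main theorem together with the generalized Riemann existence theorem. The obstacles you flag (the explicit local wall-crossing analysis and the bookkeeping of the three mutually intersecting strata through all six stages) are exactly where the paper spends its effort, so the proposal is a faithful outline of the argument actually given.
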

The following diagram summarizes the results of this paper and
\cite{KM}.
\[
\xymatrix{ &&&\bM_3\ar[dl]_{\Gamma_2^3}\ar[dr]^{\Gamma_4^2}\\
&&\bM_2\ar[dl]_{\Gamma_1^2}&&\bM_4\ar[dr]^{\Gamma^3_5}\\
&\bM_1\ar[dl]_{\Gamma^1}&&&&\bM_5\ar[dr]^{\Gamma^1_6}&&\bH\ar[d]^{\bS(\PP \cU')}\\
\bM&&&&&&\bM_6\ar[r]^\cong & \bS \\
&\bX_4\ar[ul]^{\Sigma^3}\\
&&\bX_3\ar[ul]^{\Sigma^2}\ar[r]_{\Sigma^1}&\bX_2\ar[r]_{\Sigma^2}&\bX_1\ar[r]_{\Sigma^3}&\bX
}\] All the arrows are blow-ups and the blow-up centers are
indicated above the arrows.

In \S5, we calculate the Betti numbers of $\bS$ by using Theorem
\ref{mainthm1}. When $r=3$, we get exactly the same numbers,
calculated by Ellingsrud, Piene and Stromme \cite{EPS}.

There are other interesting compactifications, such as the Chow
compactification, the variety of nets of quadrics \cite{EPS}, the
Vainsencher-Xavier compactification \cite{VX} and the variety of
triples \cite{Pi}. When $r=3$, the variety of nets of quadrics was
shown to be a blow-down of $\bH$ but the relationships for other
compactifications are not known. We hope to compare them with $\bX,
\bH, \bM, \bS$ in the future.

\section{Compactifications of the space of rational cubics}
In this section, we recall several well-known compactifications of
the space of curves: compactifications by the Hilbert scheme,
Kontsevich's moduli space of stable maps, Simpson's moduli space of
stable sheaves and the space of quasi-maps. Our goal is to compare
these compactifications. We fix a positive integer $r$.

\subsection{Compactification by quasi-maps}
This is perhaps the easiest to describe. A smooth rational curve of
degree 3 in projective space $\PP^r$, is given by an $(r+1)$-tuple
$(f_0:f_1:\cdots:f_r)$ of degree 3 homogeneous polynomials in two
variables $z_0,z_1$, the homogeneous coordinates of $\PP^1$. Upon
fixing the basis $z_0^3,z_0^2z_1,z_0z_1^2,z_1^3$ of the space of
degree 3 homogeneous polynomials, the curve is determined by a
$4\times (r+1)$ matrix of coefficients. Whenever this matrix has
maximal rank, we get a smooth rational cubic in $\PP^r$ and two such
matrices determine the same curve if and only if they are in the
same orbit under the action of $Aut(\PP^1)=PGL(2)$. Hence the space
of rational cubics in $\PP^r$ can be described as
\[
\bX_0:=\PP(\Sym ^3(\CC^2)\otimes \CC^{r+1})_4/PGL(2)
\]
where the subscript $4$ denotes the open subset of rank 4 matrices.
Thus, Geometric Invariant Theory (GIT) provides us with a natural
compactification
\[
\bX:=\PP(\Sym ^3(\CC^2)\otimes \CC^{r+1})/\!/PGL(2)
\]
which is often called, the \emph{quasi-map space}. Thanks to the
Atiyah-Bott-Kirwan theory, we can easily calculate the cohomology
ring/topological K-group/Chow ring of this compactification
\cite{K2,KM}. However, the geometric meaning of its boundary points
is not clear.

\subsection{Kontsevich's moduli space of stable maps}
In early 1990's, Kontsevich introduced the notion of stable maps and
a compactification of the space of smooth curves, called the
\emph{moduli space of stable maps}. See \cite{FP} for an
introduction.

By definition, a \emph{stable map} to $\PP^r$ is a morphism $f:C\to
\PP^r$ of a connected reduced curve $C$ which may have only nodal
singularities ($xy=0$), such that the automorphism group of $f$ is
finite. If we fix the arithmetic genus $g$ of $C$ and the homology
class $d=f_*[C]$, then we obtain a projective moduli space
$\cM_{g}(\PP^r,d)$, which parameterizes isomorphism classes of
stable maps. When $g=0$, $\cM_0(\PP^r,d)$ is an irreducible normal
projective variety with only finite quotient singularities.

Our concern in this paper is $$\bM:=\cM_0(\PP^r,3)$$ which is
obviously a compactification of the space $\bX_0$ of smooth
rational cubics. As the first step of our project of comparing
various compactifications of $\bX_0$, we proved in \cite{KM}, that
$\bM$ is obtained from $\bX$ by three blow-ups followed by two
blow-downs. The blow-up/-down centers were all described in terms
of moduli spaces of stable maps of degrees 1 and 2. These results
enabled us to do various cohomological calculations on $\bM$. In
this paper, we compare $\bM$ with other moduli theoretic
compactifications described below. We will call $\bM$ the
\emph{Kontsevich compactification}. We will write $\bM(\PP^r)$ if
it is necessary to emphasize the target space $\PP^r$.

\subsection{Hilbert scheme and Chow scheme}
Classical approaches to compactification of the space of smooth
curves are to use either the Hilbert scheme or the Chow scheme.
See \cite{Kollar, Harris} for an introduction.

The Hilbert scheme $\cH ilb^{P}(\PP^r)$ is the projective moduli
space of closed subschemes in $\PP^r$ whose Hilbert polynomial is
$P$. The space of smooth rational cubics in $\PP^r$ form an open
subset of $\cH ilb^{3m+1}(\PP^r)$ and we call its closure $\bH$, the
\emph{Hilbert compactification}. When it is necessary to emphasize
the target space $\PP^r$, we will write $\bH(\PP^r)$ instead of
$\bH$.

The Chow scheme $\cC how^{1,3}(\PP^r)$ of one dimensional cycles of
degree three is a projective scheme which contains $\bX_0$ as an
open subset. We denote by $\bC$ the closure of $\bX_0$ in the Chow
scheme and call it the \emph{Chow compacticiation}. $\bH$ is smooth
and we have a natural morphism $HC:\bH\to \bC$ forgetting the
thickening structure of multiple components. Furthermore, there is a
natural morphism $\bM\to \bC$ which forgets the ramification points
of multiple components.

\subsection{Simpson's moduli space of stable sheaves}
Let $E$ be a coherent sheaf on $\PP^r$. We say $E$ is \emph{pure} if
any nonzero subsheaf of $E$ has the same dimensional support as $E$.
A pure sheaf $E$ is called \emph{semistable} if \[
\frac{\chi(E(m))}{r(E)}\le \frac{\chi(E'(m))}{r(E')}\qquad \text{for
}m>>0
\]
for any nontrivial pure quotient sheaf $E'$ of the same dimension,
where $r(E)$ denotes the leading coefficient of the Hilbert
polynomial $\chi(E(m))$. We obtain \emph{stability} if $\le $ is
replaced by $<$.

In \cite{Simpson}, Simpson proved by GIT that there is a projective
moduli scheme $\cS imp^P(\PP^r)$ of equivalence classes of
semistable sheaves on $\PP^r$ whose Hilbert polynomial is $P$. The
following easy lemma explains why $\cS imp^{3m+1}(\PP^r)$ gives us a
compactification of $\bX_0$.
\begin{lemm}\cite[Lemma 1]{FT}\lab{lem1} If $C$ is a Cohen-Macaulay curve of
degree 3 in $\PP^r$, its structure sheaf $\cO_C$ is a stable sheaf.
\end{lemm}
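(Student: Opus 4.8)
The plan is to verify stability directly from the definition, using the fact that a Cohen–Macaulay curve $C$ of degree $3$ has pure one-dimensional structure sheaf $\cO_C$ with Hilbert polynomial $P(m)=\chi(\cO_C(m))=3m+1-p_a(C)$, and that the degree-$3$ constraint forces $p_a(C)\le 0$, so $P(m)=3m+\chi$ with $\chi=1-p_a\ge 1$. Since $r(\cO_C)=3$, the reduced Hilbert polynomial is $p(\cO_C)(m)=m+\tfrac{\chi}{3}$. Let $E'$ be any proper nonzero pure one-dimensional quotient $\cO_C\twoheadrightarrow E'$, with leading coefficient $r(E')=d'$ and Hilbert polynomial $d'm+\chi'$. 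First I would observe that $d'<3$: a surjection of sheaves supported on a curve of degree $3$ onto a pure one-dimensional sheaf of degree $3$ would have to be an isomorphism (the kernel, being supported in dimension $\le 0$ and sitting inside a pure sheaf, is zero), contradicting properness. So $d'\in\{1,2\}$, and I must show $\tfrac{\chi'}{d'}>\tfrac{\chi}{3}$, i.e. $3\chi'>d'\chi$.

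The key step is a bound on $\chi'$ from below in terms of $d'$. Writing $K=\ker(\cO_C\to E')$, which is a pure one-dimensional sheaf (a subsheaf of the pure sheaf $\cO_C$) of degree $3-d'$ and Euler characteristic $\chi-\chi'$, I would argue that $K$ is the structure-sheaf-twist ideal of a subcurve: more precisely $K=\cI_{C'/C}$ for the maximal closed subscheme $C'\subseteq C$ on which $E'$ is supported, so that $E'=\cO_{C'}$ up to the purity cleanup, and $C'$ is a Cohen–Macaulay curve of degree $d'\in\{1,2\}$. A curve of degree $1$ is a line, with $\chi(\cO_{C'})=1$; a Cohen–Macaulay curve of degree $2$ is either a smooth conic, two lines, or a double line on a plane, in all cases with $\chi(\cO_{C'})\le 1$ — and in fact $\chi(\cO_{C'})=1$ for the connected ones and could be $2$ only for two disjoint lines, which however cannot be a quotient of the connected sheaf $\cO_C$ unless $C$ itself is disconnected, excluded since $C$ is a curve (reduced or not, we may take it connected; if $C$ is disconnected the claim reduces to the connected components). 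Thus $\chi'=\chi(E')\le 1$ is false in the direction I want — instead I should bound $\chi'$ from below. The correct inequality comes from the other side: since $K=\cI_{C'/C}$ is a sheaf supported on all of $C$ but generically of rank $0$ off $C'$...

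Let me restructure: the cleanest route is to use that $C'$ is a degree-$d'$ Cohen–Macaulay curve and hence $\chi(\cO_{C'}(m))=d'm+\chi'$ with $\chi'\ge 1-p_a(C')$ and $p_a(C')\ge 0$ is false — a connected degree-$2$ or degree-$1$ Cohen–Macaulay curve in $\PP^r$ has $p_a\le 0$, giving $\chi'\ge 1$. Then the required inequality $3\chi'>d'\chi$ reads $3\chi'>d'\chi$; with $\chi'\ge 1$ and $\chi\le 3$ (from $p_a(C)\ge -1$... but $p_a(C)$ could be very negative if $C$ is reducible, making $\chi$ large!). This is the subtlety: for reducible $C$, $\chi$ can exceed $3$, so $\chi'\ge 1$ no longer suffices. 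I would handle this by additivity of Euler characteristic along the decomposition $0\to K\to\cO_C\to E'\to 0$: $\chi=\chi(K)+\chi'$, and $K=\cI_{C'/C}$ restricted appropriately is itself (the pushforward of) a twist of the structure sheaf of the complementary curve $C''$ of degree $3-d'$, so $\chi(K)=\chi(\cO_{C''})-(\text{length of }C'\cap C'')\le \chi(\cO_{C''})$. Iterating this, one sees $\chi=\sum_i\chi(\cO_{C_i})-(\text{intersection corrections})$ over irreducible components, and the reduced Hilbert polynomial of any proper CM subcurve exceeds that of $\cO_C$ precisely because removing a component decreases the "genus defect" — this is the standard fact that a connected curve is more stable than its subcurves. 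The hard part will be making this last additivity/intersection bookkeeping rigorous in the generality of arbitrary (possibly non-reduced) CM curves of degree $3$, rather than for the finitely many combinatorial types; I expect the slickest proof classifies the degree-$3$ CM curves (twisted cubic and its degenerations: plane nodal/cuspidal cubics, conic-plus-line, triple line, three lines, etc.), checks the finitely many possible destabilizing quotients $\cO_{C'}$ in each case, and in each case verifies $3\chi(\cO_{C'})>d'\,\chi(\cO_C)$ by direct computation of the two Euler characteristics from the explicit equations. Alternatively, and more conceptually, one invokes that $\cO_C$ has reduced Hilbert polynomial with constant term $\tfrac13\chi(\cO_C)$ and that for a connected CM curve this constant term is strictly minimized among all pure quotients — a result one can cite from \cite{FT} or prove by the genus-additivity argument above.
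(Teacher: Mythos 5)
Your proposal correctly reduces the problem to the right inequality but never actually proves it: after showing that a nontrivial pure quotient of $\cO_C$ is $\cO_{C'}$ for a Cohen--Macaulay subcurve $C'$ of degree $d'\in\{1,2\}$, and that one must verify $3\chi(\cO_{C'})>d'\,\chi(\cO_C)$, you oscillate between bounding $\chi(\cO_{C'})$ from above and from below and finally defer the key step to either an unexecuted classification of all degree-$3$ CM curves or a citation of essentially the statement being proved. That is a genuine gap: no version of the inequality is ever established.

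The source of the trouble is that you do not use the standing hypothesis. In this paper (and in \cite{FT}) the curve $C$ has Hilbert polynomial $3m+1$, i.e.\ $\chi(\cO_C)=1$; this is exactly what the paper's proof uses when it writes $\chi(\cO_C(m))/r(\cO_C)=m+\tfrac13$. With $\chi(\cO_C)=1$, the required inequality $3\chi(\cO_{C'})>d'$ only needs $\chi(\cO_{C'})\ge 1$, and this holds for every Cohen--Macaulay curve of degree at most $2$ in $\PP^r$ (a line, a conic, a pair of lines, or a double line, each with $\chi\ge 1$) --- which is the paper's entire two-line argument. Your worry that $\chi(\cO_C)$ could be large when $C$ is reducible is in fact pointing at the correct observation that the lemma is \emph{false} without the Hilbert-polynomial hypothesis (three pairwise disjoint lines give a decomposable, hence unstable, $\cO_C$), so no amount of genus-additivity bookkeeping can rescue the more general statement you set out to prove; relatedly, your parenthetical claim that for disconnected $C$ the assertion ``reduces to the connected components'' is incorrect, since a nontrivial direct sum is never stable.
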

\begin{proof}
Since $\cO_C$ is a quotient of $\cO_{\PP^r}$, any nontrivial pure
quotient of $\cO_{C}$ is the structure sheaf $\cO_{C'}$ of a closed
subcurve $C'$ of $C$. As the degree of $C'$ is at most two, the
support of $C'$ is a line or a conic or a pair of lines. In any
case,
\[
m+\frac13 =\frac{\chi(\cO_C(m))}{r(\cO_C)} <
\frac{\chi(\cO_{C'}(m))}{r(\cO_{C'})}=m+1, \text{ or } m+\frac12 .
\]
\end{proof}

By an easy calculation, we have an identification of tangent
spaces
$$T_{\cO_C} \cS imp^{3m+1}(\PP^r)=Ext^1_{\PP^r}(\cO_C,\cO_C)\cong
H^0(C,N_{C/\PP^r})=T_{[C]}\bX_0$$ for a smooth rational cubic $C\in
\bX_0.$ Hence, via the map $C\mapsto \cO_C$, $\bX_0$ is isomorphic
to an open subset of $\cS imp^{3m+1}(\PP^r)$. Let $\bS$ be the
irreducible component of $\cS imp^{3m+1}(\PP^r)$ which contains
$\bX_0$. We call this the \emph{Simpson compactification}. When it
is necessary to emphasize the target space $\PP^r$, we will write
$\bS(\PP^r)$ instead of $\bS$.

In \S3, we compare $\bH$ and $\bS$ and in \S4, we compare $\bM$ and
$\bS$. Their relationships will be described in terms of explicit
blow-ups and -down. 

\section{From Hilbert to Simpson}

In this section, we prove that the Hilbert compactification $\bH$
is the blow-up of the Simpson compactification $\bS$ along a
smooth subvariety. One way to prove this is as follows: the
universal family $\cZ\subset \PP^r\times\bH$ defines a family of
sheaves $\cO_\cZ$ on $\PP^r\times\bH$ which is flat over $\bH$.
The locus of unstable sheaves is a smooth divisor $\Delta$ and the
destabilizing subsheaves are zero dimensional. We then apply the
elementary modification
\[
\cF:=\ker ( \cO_{\cZ}\to \cO_{\cZ}|_\Delta\to A)
\]
where $A$ is the destabilizing quotient. Then one can check that
$\cF$ is a flat family of stable sheaves on $\PP^r\times\bH$ and
hence we obtain a morphism $\bH\to \bS$. By further analyzing
fibers, one can show that this is a blow-up map or a divisorial
contraction of $\Delta$ as we do below.

Instead of the above method of using elementary modification, we
take a shorter path of using the results of \cite{PS,FT}.
\begin{theo} \cite{PS}\lab{PS1}
$\cH ilb^{3m+1}(\PP^3)$ has only two irreducible components. They
are smooth and intersect transversely. One of them is $\bH(\PP^3)$
and the other is a 15 dimensional variety parameterizing planar
cubics coupled with points. Their intersection is a divisor
$\Delta(\PP^3)$ of $\bH(\PP^3)$.
\end{theo}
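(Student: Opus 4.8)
The plan is to analyse $H:=\cH ilb^{3m+1}(\PP^3)$ by separating its points according to whether the structure sheaf is arithmetically Cohen--Macaulay (aCM), and to read off the geometry — the two components, their smoothness, and the transversality — from the deformation-theoretic identity $T_{[Z]}H=\Hom_{\PP^3}(I_Z,\cO_Z)=H^0(\PP^3,\cN_{Z/\PP^3})$.

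First I would split $H$ into the locus $U_1$ of $Z$ with $\cO_Z$ aCM and its complement $U_2$. For $Z\notin U_1$, the failure of depth forces $Z$ to be disconnected, or to carry a length-one embedded point sticking out of a plane; concretely $Z=C\cup\{p\}$ with $C$ a cubic curve in a plane $P$ and $p$ a point. Setting $H_i:=\overline{U_i}$ (so $H_1=\bH(\PP^3)$ in the notation of the statement), it is clear as a set that $H=H_1\cup H_2$, that the twisted cubics are dense in $H_1$, and that the disjoint unions $C\sqcup\{p\}$ with $p\notin P$ are dense in $H_2$; and since every boundary point of $U_1$ is non-aCM while every aCM subscheme lies only on $H_1$, in fact $U_1\subset H_1\setminus H_2$.

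Next I would show each $H_i$ is smooth of the stated dimension. Every aCM $Z$ with Hilbert polynomial $3m+1$ has a Hilbert--Burch resolution
\[
0\lra \cO_{\PP^3}(-3)^{\oplus 2}\xrightarrow{\ M\ }\cO_{\PP^3}(-2)^{\oplus 3}\lra I_Z\lra 0
\]
with $M$ a $3\times2$ matrix of linear forms whose $2\times2$ minors cut out $Z$; checking that all the degenerate aCM subschemes (a conic meeting a line, the various connected configurations of three lines, non-reduced structures supported on a line, and so forth) carry this Betti table is part of the work. Then $U_1$ is the orbit space of the open set of such matrices modulo $GL_3\times GL_2$, the bookkeeping $3\cdot2\cdot4-(9+4-1)=12$ gives $\dim U_1=12$, and a direct computation of $h^0(\cN_{Z/\PP^3})=12$ along $U_1$, together with a short argument across the boundary, shows $H_1$ is smooth, irreducible and rational of dimension $12$. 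For $H_2$, the assignment $Z\mapsto(P,C,p)$ identifies an open part of $U_2$ with a $\PP^9\times\PP^3$-bundle over $\check\PP^3$; one checks that the resulting rational map from this smooth $15$-fold to $H$ becomes a morphism after blowing up the locus $\{p\in C\}$, and that this morphism is an isomorphism onto $H_2$, so $H_2$ is smooth, rational and $15$-dimensional with $h^0(\cN)=15$ at a general point.

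Finally I would identify the intersection $\Delta:=H_1\cap H_2$ (the $\Delta(\PP^3)$ of the statement) — the planar subschemes that occur as limits of twisted cubics, i.e. a plane cubic carrying a spatial embedded point — and show it is smooth and irreducible of dimension $11$, for instance by relating it birationally to the locus of nodal plane cubics in $\PP^3$ with the embedded point at the node; thus $\Delta$ is a divisor in $H_1$. The crucial computation is then $h^0(\cN_{Z/\PP^3})=16$ for every $Z\in\Delta$: since $T_{[Z]}H_1$ and $T_{[Z]}H_2$ are then subspaces of dimensions $12$ and $15$ of this $16$-dimensional space meeting in $T_{[Z]}\Delta$ of dimension $11=12+15-16$, the two smooth sheets cross transversally along $\Delta$ and $H$ coincides near $\Delta$ with their union; combined with $h^0(\cN_Z)=12$ on $U_1$ and $=15$ on $U_2\setminus\Delta$ — so that off $\Delta$ each point sits on exactly one sheet — this forces $H_1$ and $H_2$ to be the only two components. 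The main obstacle is exactly the case-by-case normal-bundle bookkeeping behind all of this: enumerating every subscheme with Hilbert polynomial $3m+1$ (especially the non-reduced ones and the possible embedded-point structures), verifying the common aCM resolution, pinning down $\Delta$ precisely, and confirming that $h^0(\cN_Z)$ equals $12$, $15$, $16$ on the three strata and is never larger. Once those are in hand, the rest of the statement is formal.
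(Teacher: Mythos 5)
The paper offers no proof of this statement—it is quoted from Piene--Schlessinger \cite{PS}—so I am measuring your proposal against their argument. Your skeleton is essentially theirs: separate the (arithmetically) Cohen--Macaulay subschemes from the rest, describe the twisted-cubic component via the Hilbert--Burch determinantal resolution (dimension $12$), describe the other component as a blow-up of a $\PP^9\times\PP^3$-bundle over $\check\PP^3$ (dimension $15$), identify $\Delta$ with the singular plane cubics carrying a spatial embedded point at a singular point (dimension $11$), and compute $h^0(\cN_{Z/\PP^3})$ on the three strata. All the numerology you quote ($12$, $15$, $16$, and $11=12+15-16$) is correct.

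The genuine gap is in the last step. From $h^0(\cN_{Z/\PP^3})=16$ at $Z\in\Delta$ you conclude that ``$T_{[Z]}H_1$ and $T_{[Z]}H_2$ are \emph{then} subspaces of dimensions $12$ and $15$,'' but the smoothness of $H_1=\bH(\PP^3)$ at a point of $\Delta$ does not follow from anything you have established: a priori one only gets $12\le\dim T_{[Z]}H_1\le 16$, since $H_1$ is merely a $12$-dimensional irreducible component sitting inside a $16$-dimensional tangent space. (Your blow-up construction does take care of $H_2$, granted that the map is an isomorphism onto its image.) Nor does transversality follow even once both components are known to be smooth: the tangent space of a union can strictly contain the sum of the tangent spaces of the pieces, so $\dim T_{[Z]}H=16$ is compatible with $T_{[Z]}H_1+T_{[Z]}H_2$ being a proper subspace. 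This is precisely where Piene and Schlessinger do the real work: they compute the versal deformation --- the explicit local equations of the Hilbert scheme --- at the most degenerate (Borel-fixed) point of $\Delta$, verify by hand that the completed local ring is that of a $12$-plane and a $15$-plane meeting transversally in an $11$-plane inside $\CC^{16}$, and then propagate this to all of $\Delta$ using the $PGL_4$-action and semicontinuity of $h^0(\cN)$. Your ``short argument across the boundary'' must be replaced by this computation or an equivalent. Once that is supplied, your argument that there is no third component --- every point off $\Delta$ is a smooth point of $H$, and a putative component inside $\Delta$ would be contained in the irreducible $H_1$ of strictly larger dimension --- does close the proof.
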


\begin{theo}\cite{FT}\lab{FT1} \begin{enumerate}
\item $\cS imp^{3m+1}(\PP^3)$ is the fine moduli space of stable
sheaves, i.e. semistable sheaves are stable.
\item $\cS imp^{3m+1}(\PP^3)$ has two irreducible components which
intersect transversely along $\Delta(\PP^3)$. One is $\bS(\PP^3)$
and the other is a 13 dimensional variety which parameterizes planar
cubics together with marked points.
\item $\bS(\PP^3)$ is isomorphic to
$\bH(\PP^3)$.\end{enumerate}\end{theo}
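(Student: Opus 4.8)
The plan is to establish the three assertions in order, deducing the last from the comparison morphism of \S3. Assertion (1) is purely numerical. A semistable sheaf $E$ with $\chi(E(m))=3m+1$ is pure of dimension one with $r(E)=3$, so its reduced Hilbert polynomial is $m+\tfrac13$. Any proper nonzero pure quotient $E'$ of the same dimension has $r(E')\in\{1,2\}$, and strict semistability would require $\chi(E'(m))=r(E')\bigl(m+\tfrac13\bigr)$; but the constant term $r(E')/3$ of this linear polynomial is not an integer, whereas $\chi(E'(0))\in\ZZ$. Hence the stability inequality is always strict, so semistability coincides with stability and, $\gcd(3,1)$ being $1$, the space $\cS imp^{3m+1}(\PP^3)$ is a fine moduli space.

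For (2) I would classify a stable $E$ according to whether its one-dimensional support, a cubic, spans $\PP^3$ or lies in a plane. If the support is nondegenerate then $E$ deforms to $\cO_C$ for a smooth twisted cubic $C$; the closure of this locus is $\bS(\PP^3)$, smooth of dimension $12=\dim\bX_0$ at its generic point, with tangent space $\Ext^1_{\PP^3}(\cO_C,\cO_C)\cong H^0(N_{C/\PP^3})$ and vanishing obstruction $\Ext^2_{\PP^3}(\cO_C,\cO_C)=0$. If instead the support lies in a plane $H$, then $E=i_*E_0$ for the inclusion $i\colon H\hookrightarrow\PP^3$ and a stable sheaf $E_0$ on $H\cong\PP^2$ with $\chi(E_0(m))=3m+1$; such $E_0$ form the moduli space $M_{\PP^2}(3m+1)$ of stable sheaves on the plane, which is smooth and irreducible of dimension $10$ (a generic member being a degree-one line bundle on a smooth plane cubic, whence the count $9+1$). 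Letting $H$ range over the dual $\PP^3$ of planes exhibits the planar locus as a fibration of relative dimension $10$ over $(\PP^3)^\ast$, hence smooth and irreducible of dimension $13$. A dimension-and-tangent-space comparison then identifies these as the two irreducible components.

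The technical heart is to show these two components are smooth and meet transversally along $\Delta(\PP^3)$. At a planar sheaf $E=i_*E_0$ that is also a limit of twisted cubics, I would compute $\Ext^\bullet_{\PP^3}(E,E)$ from $\Ext^\bullet_{\PP^2}(E_0,E_0\otimes\wedge^\bullet N_{H/\PP^3})$ through the spectral sequence of the regular embedding $i$, using $N_{H/\PP^3}\cong\cO_H(1)$; away from $\Delta$ this returns $\Ext^1_{\PP^2}(E_0,E_0)\oplus\Hom_{\PP^2}(E_0,E_0(1))=10+3=13$, the tangent space to the planar component. The point is to verify that along the intersection the group $\Ext^1_{\PP^3}(E,E)$ jumps to dimension $14$, with the spatial and planar deformations spanning it as its $12$-dimensional and $13$-dimensional subspaces; transversality, $12+13-14=11$, then shows $\Delta(\PP^3)$ is $11$-dimensional and hence a divisor in the $12$-dimensional $\bS(\PP^3)$, consistently with Theorem~\ref{PS1}.

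Finally, assertion (3) follows from Proposition~\ref{prop1.4}: the morphism $\bH(\PP^3)\to\bS(\PP^3)$ is the blow-up of $\bS(\PP^3)$ along the smooth locus of stable sheaves with planar support, and by the previous step that locus is exactly the divisor $\Delta(\PP^3)=\bS\cap(\text{planar component})$. Since blowing up a Cartier divisor is an isomorphism, $\bH(\PP^3)\cong\bS(\PP^3)$; equivalently, one checks that $\bH\to\bS$ is bijective and invokes Zariski's main theorem, both spaces being smooth. The main obstacle is the $\Ext$ computation of the preceding paragraph: beyond identifying the relevant groups and establishing the expected jump to dimension $14$, one must rule out further irreducible components by controlling the stable sheaves with degenerate support (a conic plus a line, or three coplanar or concurrent lines, together with their embedded-point thickenings) and checking that all of them lie in the closures of the two families above.
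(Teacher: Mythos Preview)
The paper does not prove this theorem: it is quoted verbatim from Freiermuth--Trautmann \cite{FT} and used as input for the rest of \S3. So there is no ``paper's own proof'' to compare against; the relevant comparison is with \cite{FT}, whose argument does proceed roughly along the lines you sketch for (1) and (2), via a classification of stable sheaves by their support and an $\Ext$-computation at the planar locus.

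Your argument for (1) is correct and standard. Your outline for (2) has the right shape but, as you yourself say, leaves the hard work undone: the jump of $\Ext^1$ to dimension $14$ along $\Delta(\PP^3)$, smoothness of both components there, and the exclusion of further components all require substantial effort --- this is precisely the content of \cite{FT}.

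The genuine gap is in your deduction of (3). You invoke Proposition~\ref{prop1.4}, but in this paper that proposition (restated and proved as Proposition~3.3) is established \emph{using} Theorem~\ref{FT1}: the proof writes ``By Theorem~\ref{FT1}, $\bH(\PP\cU)\cong\bS(\PP\cU)$'' and builds on that isomorphism. Citing Proposition~\ref{prop1.4} to derive part (3) of Theorem~\ref{FT1} is therefore circular within the paper's logical structure. Your fallback via Zariski's main theorem also presupposes that $\bS(\PP^3)$ is smooth, which in the paper is again a consequence of the cited result rather than an independent input. A noncircular route would be to construct $\bH(\PP^3)\to\bS(\PP^3)$ directly by elementary modification (as the opening paragraph of \S3 sketches), prove bijectivity from the classification in (2), and establish smoothness of $\bS(\PP^3)$ independently from the $\Ext^2$-vanishing you need for (2) anyway. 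But then (3) becomes a corollary of the full strength of (2), not of Proposition~\ref{prop1.4}.
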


From these theorems, we obtain the following.
\begin{prop}
\begin{enumerate}
\item $\bH(\PP^r)$ is isomorphic to $\bH(\PP\cU)$ which is a
component of the relative Hilbert scheme for the bundle $\PP\cU\to
Gr(4,r+1)$ of $\PP^3$'s, where $\cU$ is the universal rank 4 vector
bundle on the Grassmannian $Gr(4,r+1)$. \item $\bH(\PP^r)$ is the
blow-up of $\bS(\PP^r)$ along the smooth locus of planar stable
sheaves. In particular, $\bS(\PP^r)$ is nonsingular.
\end{enumerate}
\end{prop}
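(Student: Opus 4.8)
The plan is to prove the two statements separately, deducing the second from the first together with the $r=3$ results quoted as Theorems~\ref{PS1} and \ref{FT1}. For part (1), I would first recall that a closed subscheme $C\subset\PP^r$ with Hilbert polynomial $3m+1$ which lies in the closure $\bH(\PP^r)$ of the locus of smooth rational cubics is a curve of degree $3$ and arithmetic genus $0$ (the flat limit of connected curves is connected, and the degree and genus are read off from $3m+1$), hence spans at most a $\PP^3$. Thus every point of $\bH(\PP^r)$ determines a $4$-dimensional subspace $V\subset\CC^{r+1}$ — the linear span of $C$ — giving a morphism $\bH(\PP^r)\to Gr(4,r+1)$, and $C$ sits inside the corresponding fiber $\PP(V)\cong\PP^3$. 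This produces a map from $\bH(\PP^r)$ into the relative Hilbert scheme $\cH ilb^{3m+1}(\PP\cU/Gr(4,r+1))$ landing in the component $\bH(\PP\cU)$ whose generic point is a smooth rational cubic in a $\PP^3$; conversely a point of $\bH(\PP\cU)$ gives a $\PP^3\subset\PP^r$ and a cubic inside it, i.e.\ a point of $\bH(\PP^r)$. One checks these are mutually inverse and, since the smooth rational cubics form a dense open subset matching on both sides and both spaces are reduced, the isomorphism of open subsets extends to an isomorphism of the closures.

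For part (2), working over $Gr(4,r+1)$ and using part (1), the question is fiberwise a statement about $\bH(\PP^3)$ versus $\bS(\PP^3)$, so I would like to simply invoke Theorem~\ref{FT1}(3) that $\bH(\PP^3)\cong\bS(\PP^3)$ together with the observation that $\bS(\PP^r)$ is the relative Simpson moduli space $\bS(\PP\cU)$ over $Gr(4,r+1)$ — the latter because any stable sheaf on $\PP^r$ with Hilbert polynomial $3m+1$ lying in the component $\bS(\PP^r)$ has one-dimensional support of degree $3$, hence is supported on a curve spanning at most a $\PP^3$, and is pushed forward from that $\PP^3$. However, when $r=3$ the ``planar locus'' is a divisor and $\bH(\PP^3)=\bS(\PP^3)$, whereas the proposition asserts that for general $r$ the map $\bH(\PP^r)\to\bS(\PP^r)$ is a genuine blow-up along the planar locus; so the relative picture over $Gr(4,r+1)$ cannot be the whole story. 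The point is that inside a fixed $\PP^3$ a planar cubic spans only a $\PP^2$, so the span map is not defined there, and the honest construction must allow the $\PP^3$ to vary: the locus of planar stable sheaves in $\bS(\PP^r)$ is the image of a $Gr(3,r+1)$-bundle (over the Simpson space of planar cubics, which is itself fibered over $Gr(3,r+1)$), and its preimage in $\bH(\PP^r)$ is the divisor $\Delta$ of curves whose span is exactly a $\PP^2$ but which are recorded together with their embedded/extra points in a $\PP^3$. Concretely I would construct the morphism $\bH(\PP^r)\to\bS(\PP^r)$ by the elementary-modification recipe sketched at the start of \S3: take the universal family $\cZ\subset\PP^r\times\bH(\PP^r)$, observe $\cO_\cZ$ is flat over $\bH$, identify the non-semistable locus as the smooth divisor $\Delta$ with zero-dimensional destabilizing quotients $A$, set $\cF=\ker(\cO_\cZ\to\cO_\cZ|_\Delta\to A)$, verify flatness and fiberwise stability of $\cF$, and thereby get $\bH(\PP^r)\to\bS(\PP^r)$.

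It then remains to show this morphism is the blow-up along the smooth planar locus. The strategy is: (a) show the image of $\Delta$ is exactly the locus $Z$ of planar stable sheaves and that $Z$ is smooth — here the $r=3$ input of Theorem~\ref{FT1}(2), namely that the planar component and $\bS(\PP^3)$ meet transversely along $\Delta(\PP^3)$, combined with the relative-over-$Gr$ description, gives smoothness of $Z$ and the normal bundle; (b) compute the fibers of $\bH\to\bS$ over a point of $Z$ and show they are projective spaces of the expected dimension $\operatorname{codim} Z-1$, which reduces to a local deformation-theoretic computation of $\Ext^1$ and $\Ext^2$ comparing $\cO_C$ (with $C\subset\PP^2\subset\PP^r$ but thought of as $\cO_\cZ$-limit) to its elementary modifications; and (c) invoke the standard criterion that a proper birational morphism of smooth varieties which is an isomorphism away from a smooth center and restricts to the projectivized normal bundle over that center is the blow-up, finishing with the statement that $\bS(\PP^r)$ is nonsingular because $\bH(\PP^r)$ is and the blow-down of a smooth blow-up is smooth. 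The main obstacle I anticipate is step (b)–(a): pinning down that the non-semistable locus is a \emph{smooth} divisor $\Delta$ with the fibers of the contraction being honest projective spaces — i.e.\ the local analysis of the destabilizing quotient $A$ and of $\Ext^1_{\PP^r}(\cF,\cF)$ along $Z$ — since everything else is formal once the relative-Hilbert identification of part (1) and the quoted $\PP^3$ results are in hand.
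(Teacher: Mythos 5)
Your overall strategy is sound, but for part (2) you take the route the paper explicitly declines to follow, and you leave the hardest verifications as acknowledged gaps. For part (1) your argument is essentially the paper's, except that the paper constructs the map in the opposite direction ($\phi:\bH(\PP\cU)\to\bH(\PP^r)$, forgetting the $\PP^3$) and proves injectivity by citing the classification of points of $\bH(\PP^3)$ in \cite{Harris}: the crucial fact is that every subscheme in $\bH$ spans \emph{exactly} a $\PP^3$, because the planar cubics occurring in the Hilbert compactification carry a spatial embedded point sticking out of their plane. Your step ``spans at most a $\PP^3$, thus determines a $4$-dimensional subspace $V$'' is a non sequitur as written --- if some subscheme spanned only a $\PP^2$ the map to $Gr(4,r+1)$ would be undefined there --- so you need the Piene--Schlessinger description to close this. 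For part (2), the paper's actual proof is the ``shorter path'': it uses Theorem~\ref{FT1}(3) fiberwise to identify $\bH(\PP\cU)\cong\bS(\PP\cU)$, and then studies the pushforward map $\psi:\bS(\PP\cU)\to\bS(\PP^r)$, which is an isomorphism off the divisor $\Delta$ of planar sheaves, identifies $\Delta\cong\bS(\PP\cU')\times_{Gr(3,r+1)}\PP(\CC^{r+1}/\cU')$, observes $\psi$ contracts the $\PP^{r-3}$ fibers, and verifies by an explicit one-parameter family of twisted cubics degenerating to a planar one that $N_{\Delta}$ restricts to $\cO_{\PP^{r-3}}(-1)$ on those fibers --- which is exactly what lets one conclude both that $\psi$ is a blow-down \emph{and} that the target is smooth. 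Your elementary-modification construction of $\bH(\PP^r)\to\bS(\PP^r)$ is the alternative the paper sketches at the start of \S3 and then abandons; it is viable but costs you the flatness and fiberwise-stability checks for $\cF$, the smoothness of the unstable divisor, and the identification of the destabilizing quotients, none of which you carry out. Finally, your step (c) is mildly circular: the criterion ``a proper birational morphism of smooth varieties which is an isomorphism away from a smooth center\dots is the blow-up'' presupposes smoothness of $\bS(\PP^r)$, which is part of the conclusion. The fix is the one the paper uses: first establish the $\cO(-1)$ normal-bundle condition on the contracted projective fibers (a Fujiki--Nakano/Moishezon-type blow-down criterion), which simultaneously produces a smooth blow-down, and only then identify that blow-down with $\bS(\PP^r)$ via the bijective morphism and Zariski's main theorem.
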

\begin{proof}
(1) The relative Hilbert scheme $\cH ilb^{3m+1}(\PP U)$ over
$Gr(4,r+1)$ has fibers $\cH ilb^{3m+1}(\PP^3)$. By Theorem
\ref{PS1}, we obtain an irreducible component $\bH(\PP\cU)$ whose
fibers are $\bH(\PP^3)$. We then have a natural morphism
\[ \phi: \bH(\PP\cU)\to \bH(\PP^r)\]
sending $Z\subset \PP U$ to $Z\subset \PP^r$ via the inclusion $\PP
U\hookrightarrow \PP^r$ for any $U\in Gr(4,r+1)$. By Theorem
\ref{PS1}, $\bH(\PP\cU)$ is smooth and proper. Since both
$\bH(\PP\cU)$ and $\bH(\PP^r)$ are compactifications of the space of
smooth rational cubics in $\PP^r$, $\phi$ is birational.

In \cite{Harris}, all possible types of elements in $\bH(\PP^3)$ are
described. Looking at the list, one immediately sees that $\phi$ is
injective because the image $\phi(y)$ of every point $y\in
\bH(\PP\cU)$ determines a unique $\PP^3$. This certainly implies
that $\phi$ is bijective. By Zariski's main theorem, we deduce that
$\phi$ is an isomorphism.

(2) Similarly as above, we have a smooth family $\bS(\PP\cU)\to
Gr(4,r+1)$ whose fibers are $\bS(\PP U)$ for $U\in Gr(4,r+1)$.  By
Theorem \ref{FT1}, $\bH(\PP\cU)\cong \bS(\PP\cU)$ and thus it
suffices to study the relationship between $\bS(\PP\cU)$ and
$\bS(\PP^r)$. The inclusion $\imath:\PP U\hookrightarrow \PP^r$ for
$U\in Gr(4,r+1)$ induces a morphism
\[\psi:\bS(\PP\cU)\to \bS(\PP^r)\]
by sending $F$ to $\imath_*F$. This is an isomorphism on the
complement of the divisor $\Delta$ of planar sheaves. To describe
this divisor, let $\bS(\PP^2)\subset \cS imp^{3m+1}(\PP^2)$ be the
locus of sheaves $F$ each of which is a nontrivial extension $$0\to
\cO_C\to F\to \CC_p\to 0$$ for a singular planar cubic $C$ and a
singular point $p\in C$. See \cite[Lemma 2]{FT}. Then $\Delta$ is a
$\bS(\PP^2)$-bundle on $\PP \cU^*$ over $Gr(4,r+1)$. By Theorem
\ref{FT1}, $\Delta$ is smooth when $r=3$ and hence $\bS(\PP^2)$ is
nonsingular.

Using the isomorphism $\PP \cU^*\cong \PP(\CC^{r+1}/\cU')$ where
$\cU'$ is the universal rank 3 vector bundle on $Gr(3,r+1)$, we see
that $\Delta $ is isomorphic to $\bS(\PP
\cU')\times_{Gr(3,r+1)}\PP(\CC^{r+1}/\cU')$ over $Gr(3,r+1)$.
Obviously, the sheaf $F$ is independent of the choice of $\PP^3$
containing the support and hence $\psi$ is constant on the fibers
$\PP^{r-3}$ of $\PP(\CC^{r+1}/\cU')\to Gr(3,r+1)$. To deduce that
$\psi$ is the blow-up map along $\bS(\PP \cU')$ and hence
$\bS(\PP^r)$ is smooth, we only need to show that the normal bundle
of $\Delta$ restricted to a fiber of $\PP(\CC^{r+1}/\cU')\to
Gr(3,r+1)$ is $\cO_{\PP^{r-3}}(-1)$. But this can be easily checked.
For instance, suppose $C$ is the planar curve given by a map
$\PP^1\to \PP^r$
\[
(z_0^3:z_0^2z_1+z_0z_1^2:z_1^3:0:\cdots:0)
\]
and consider the family of smooth rational curves  given by
\[
(z_0^3:z_0^2z_1+z_0z_1^2:z_1^3:a_1z_0z_1^2:a_2z_0z_1^2:\cdots:a_{r-2}z_0z_1^2),\quad
(a_1,\cdots,a_{r-2})\in \CC^{r-2}-\{0\}.
\]
This gives us a morphism $\CC^{r-2}-\{0\}\to \bS(\PP^r)$ which has a
unique extension $\CC^{r-2}\to \bS(\PP^r)$, whose central image $F$
fits into a nonsplit exact sequence
\[
0\to \cO_C\to F\to \CC_p\to 0
\]
where $p$ is the unique nodal point. This extension is of course
obtained by taking the direct image $f_*\cO_{\PP^1}$ for each
$f:\PP^1\to \PP^r$ in the family parameterized by $\CC^{r-2}$. On
the other hand, we have a morphism $\CC^{r-2}-\{0\}\to \bS(\PP\cU)$
which extends to $\cO_{\PP^{r-3}}(-1)\to \bS(\PP\cU)$ by taking
direct image after choosing a $\PP^3$ containing the plane of $C$.
This means the normal bundle restricts to $\cO(-1)$ as desired.
Since $\Delta$ is flat over $\bS(\PP \cU')$, this holds for every
fiber.
\end{proof}

\section{From Kontsevich to Simpson}

In this section, we compare the Kontsevich compactification $\bM$
and the Simpson compactification $\bS$. We will prove that the
birational map $\bM\dashrightarrow \bS$ is the composition of three
blow-ups and three blow-downs, whose centers will be described
explicitly below.

Let us consider any family of stable maps of degree 3 parameterized
by a reduced scheme $Z$
\[\xymatrix{
\cC \ar[r]^\ev \ar[d]_\pi & \PP^r\\
Z }\] where $\pi$ is a family of connected curves of arithmetic
genus 0, with at worst nodal singularities. Let $\cE_0$ be the
direct image of $\cO_\cC$ by $(\ev,\pi):\cC\to \PP^r\times Z$.
Then $\cE_0$ is a family of coherent sheaves on $\PP^r$, flat over
$Z$ because the Hilbert polynomial is constantly $3m+1$. By Lemma
\ref{lem1}, the restriction of $\cE_0$ to $\PP^r\times Z_0$ where
$Z_0$ is the locus of smooth curves, is a family of stable sheaves
on $\PP^r$. Hence we obtain a birational map
\[
\Phi:\bM\dashrightarrow \bS .
\]
How do we eliminate the locus of indeterminacy? First we find the
locus of indeterminacy. Next we find a suitable sequence of
blow-ups and apply elementary modification to construct a family
of stable sheaves. Thus we get a morphism to $\bS$. Finally we
study the local geometry of the exceptional divisors and show that
we can contract the divisors. After these blow-downs, we obtain an
isomorphism of the resulting model of $\bM$ with $\bS$.

\bigskip

\subsection{Locus of indeterminacy}
To find the locus of indeterminacy of $\Phi$, we need the following
generalization of Lemma \ref{lem1}.
\begin{lemm}
If $f:C\to \PP^r$ is a stable map in $\bM$ with no multiple
components (i.e. no component of $f(C)$ is a multiply covered by
$f$), then $f_*\cO_C$ is a stable sheaf.
\end{lemm}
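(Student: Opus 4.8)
The plan is to mimic the proof of Lemma \ref{lem1}, but to account for the fact that $f_*\cO_C$ need no longer be a quotient of $\cO_{\PP^r}$ when $f$ contracts components or is generically finite onto components of lower-degree image. First I would set $E=f_*\cO_C$ and observe that $E$ is a pure one-dimensional sheaf of Hilbert polynomial $3m+1$: purity follows from the hypothesis that no component of $C$ is contracted and that $f$ has degree one onto each component of its image, so $E=(f_{\mathrm{red}})_*\cO_{C_{\mathrm{red}}}$ and $\chi(E(m))=\chi(\cO_{f(C)}(m))$ as the map $\cO_{f(C)}\to E$ is an isomorphism in this "no multiple components" situation (here $f(C)$ is the scheme-theoretic image). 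Then I would argue that every nontrivial pure quotient $E'$ of $E$ is of the form $\cO_{C'}$ for a closed subcurve $C'\subseteq f(C)$ supported on a union of some of the irreducible components of $f(C)$, exactly as in Lemma \ref{lem1}: a pure quotient of $E$ corresponds to a subsheaf of $E^\vee$, equivalently to a subcurve of the support, and since $E$ agrees with the structure sheaf of its support there is no room for a "thickened" quotient. The degree of $C'$ is then $1$ or $2$ (it cannot be $3$, since a degree-$3$ subcurve would be all of $f(C)$ and the quotient would be trivial), so $r(E')=1$ or $2$ while $r(E)=3$, and the numerical inequality
\[
\frac{\chi(\cO_{f(C)}(m))}{3}=m+\tfrac13<m+1\quad\text{or}\quad m+\tfrac12=\frac{\chi(\cO_{C'}(m))}{r(\cO_{C'})}
\]
gives strict stability, just as before.

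The key steps, in order: (i) reduce to $E=(f|_{C_{\mathrm{red}}})_*\cO_{C_{\mathrm{red}}}$ and check that the natural map $\cO_{f(C)}\to E$ is an isomorphism under the no-multiple-components hypothesis, so that $E$ is pure of dimension $1$ with the same Hilbert polynomial as $\cO_{f(C)}$; (ii) show that pure quotients of $E$ are structure sheaves of subcurves of $f(C)$; (iii) bound the degree of any proper subcurve by $2$; (iv) conclude via the Hilbert-polynomial inequality. Step (iv) is purely the computation of Lemma \ref{lem1} and needs no change.

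The main obstacle will be step (i): one must be careful that, even though $f$ may glue together several $\PP^1$'s or collapse rational tails, the condition "no component of $f(C)$ is multiply covered by $f$" forces $f$ to be birational (degree one) onto each irreducible component of the image, so that pushing forward $\cO_C$ genuinely produces the structure sheaf of the image curve and not a nilpotent-thickened sheaf. Equivalently, one has to rule out embedded points or non-reduced structure in $E$; this is where purity can fail in general (e.g. when a component is contracted or doubly covered, which is precisely the excluded case), and it is exactly these failures that produce the loci $\Gamma^1$ and $\Gamma^2$ of indeterminacy discussed afterward. Once $E$ is identified with $\cO_{f(C)}$ for a Cohen--Macaulay (indeed reduced, at worst nodal) curve of degree $3$, the rest is Lemma \ref{lem1} verbatim.
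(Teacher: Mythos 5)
There is a genuine gap: your step (i) fails precisely in the planar case, which is not excluded by the hypothesis ``no multiple components.'' If $f$ covers each component of its image with degree one but the image $C'=f(C)$ lies in a plane (for instance $C=\PP^1\to C'$ the normalization of a nodal or cuspidal plane cubic, or a chain of components mapping onto a line plus a conic, or onto three lines, in a common plane), then $C'$ is a reduced plane cubic of arithmetic genus $1$, so $\chi(\cO_{C'}(m))=3m$ while $\chi(f_*\cO_C(m))=3m+\chi(\cO_C)=3m+1$. Hence the adjunction map $\cO_{C'}\to f_*\cO_C$ is injective but \emph{not} an isomorphism; its cokernel is a length-one skyscraper $\cO_p$, and $f_*\cO_C$ is strictly larger than the structure sheaf of the scheme-theoretic image. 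Your claim that $E$ ``agrees with the structure sheaf of its support,'' and that this can only fail for contracted or multiply covered components, is therefore false: the discrepancy is caused by $f$ identifying fewer points than the image curve has singularities, not by multiple covers. Consequently steps (ii)--(iv) cannot be run verbatim from Lemma \ref{lem1}, whose proof crucially uses that the sheaf is a quotient of $\cO_{\PP^r}$ (hence cyclic, so every quotient is a structure sheaf); $f_*\cO_C$ is not such a quotient in the planar case, since the image of $\cO_{\PP^r}\to f_*\cO_C$ is only the proper subsheaf $\cO_{C'}$.

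The paper accordingly splits into two cases. For non-planar $f$ your argument is exactly what is done: $C'$ is Cohen--Macaulay of arithmetic genus $0$, $f_*\cO_C=\cO_{C'}$, and Lemma \ref{lem1} applies. For planar $f$ one instead shows that $f_*\cO_C$ sits in a \emph{nonsplit} extension $0\to\cO_{C'}\to f_*\cO_C\to\cO_p\to 0$ (nonsplit because $\Hom(\cO_p,f_*\cO_C)=\Hom(f^*\cO_p,\cO_C)=0$; were it split, $f_*\cO_C$ would not even be pure), and then deduces stability of $f_*\cO_C$ from the stability of $\cO_{C'}$ together with this nonsplitness. That extra argument is the content your proposal is missing.
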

\begin{proof}
If $f$ is non-planar, $C'=f(C)$ is a Cohen-Macaulay curve and
$f_*\cO_{C}=\cO_{C'}$. By Lemma \ref{lem1}, $f_*\cO_C$ is stable.
Suppose now $f$ is planar. By our assumption, $C'$ is a singular
cubic and $f_*\cO_C$ fits into an exact sequence
\[
0\to \cO_{C'}\to f_*\cO_C\to \cO_p\to 0
\]
for some point $p\in C'$ because the Hilbert polynomial of
$\cO_{C'}$ is $3m$. The first map comes from the adjunction
$\cO_{\PP^r}\to f_*f^*\cO_{\PP^r}=f_*\cO_C$. Since $\Hom
(\cO_p,f_*\cO_C)=\Hom (f^*\cO_p,\cO_C)=0$, $f_*\cO_C$ is a
nonsplit extension. Because $C'$ is Cohen-Macaulay, $\cO_{C'}$ is
stable. It is easy to see now that $f_*\cO_C$ is stable.
\end{proof}

Consequently, the locus of unstable sheaves in the family $\cE_0$
has two irreducible components: one component $\Gamma^1$ consists of
stable maps, each of which has a single line as its image, and the
other component $\Gamma^2$ consists of stable maps, each of which
has a pair of lines as its image. Let $Gr(2,r+1)$ be the
Grassmannian of two dimensional subspaces of $\CC^{r+1}$ and $\cU$
be the universal rank 2 bundle. Let $\cM_0(\PP\cU,d)$  be the
relative moduli space of stable maps of degree $d$ and arithmetic
genus $0$ to the fibers of $\PP\cU\to Gr(2,r+1)$. The obvious map
$\cM_0(\PP\cU,3)\to Gr(2,r+1)$ is a locally trivial bundle with
fiber $\cM_0(\PP^1,3)$ and we have an inclusion map
\begin{equation}\lab{eq4.1}\Gamma^1:=\cM_0(\PP \cU,3)\hookrightarrow
\cM_0(\PP^r,3).\end{equation} By gluing operation at the marked
points, we see that the component $\Gamma^2$ is isomorphic to the
fiber product
\[
\cM_{0,1}(\PP\cU,2)\times_{\PP^r} \PP\cU
\]
where $\cM_{0,1}(\PP\cU,2)$ is the moduli space of stable maps of
genus $0$ and degree $2$ with one marked point to the fibers of
$\PP\cU\to Gr(2,r+1)$. Note here that
$\cM_{0,1}(\PP^r,1)=\PP\cU\cong \PP T_{\PP^r}$ and the marked points
give us morphisms to $\PP^r$ in the fiber product. 
For further analysis however, we need a description of $\Gamma^1$
and $\Gamma^2$ via GIT.

\bigskip

\subsection{Indeterminacy via GIT}
In \cite[\S5]{KM}, we proved that
\begin{equation}\lab{eq916-1} \bM\cong Q_0/ SL(2)\end{equation} for
a smooth quasi-projective variety $Q_0=\bP_5$. Let us briefly recall
the construction.

We start with the stable part $\bP_0$ of the projective space $\PP
(\Sym^3(\CC^2)\otimes \CC^{r+1})$ with respect to the action of
$SL(2)$ on $\Sym^3(\CC^2)$. An element in $\bP_0$ can be thought
of as an $(r+1)$-tuple $(f_0:\cdots :f_r)$ of homogeneous
polynomials in $z_0,z_1$ of degree 3. When there are no common
zeroes of these $(r+1)$ polynomials, we get a stable map. Let
$\pi_1:\bP_1\to \bP_0$ be the blow-up along the locus $\Sigma^3$
of three common zeroes. Next let $\pi_2:\bP_2\to \bP_1$ be the
blow-up along the proper transform of the locus $\bar\Sigma^2$ of
at least two common zeros. Let $\pi_3:\bP_3\to \bP_2$ be the
blow-up along the proper transform of the locus $\bar\Sigma^1$ of
at least one common zero. After these three blow-ups and
elementary modifications along the exceptional divisors, we obtain
a family of stable maps
\[
\xymatrix{\cC_3\ar[r]^{\ev_3}\ar[d] & \PP^r\\ \bP_3}
\]
parameterized by $\bP_3$ and thus a morphism $\bP_3\to \bM$.

The exceptional divisor of the second blow-up becomes a
$\PP^1$-bundle and the normal bundle restricted to each fiber
$\PP^1$ is $\cO(-1)$. Hence we can contract this divisor to obtain
$\pi_4:\bP_3\to \bP_4$. Then the exceptional divisor of the first
blow-up becomes a $\PP^2$-bundle and the normal bundle restricted to
each fiber $\PP^2$ is $\cO(-1)$. Hence we can contract this divisor
to obtain $\pi_5:\bP_4\to\bP_5$. The morphism $\bP_3\to \bM$ factors
through the two blow-downs and the induced map $\bP_5\to \bM$ is
$SL(2)$-invariant. Therefore, we obtain a morphism $\bP_5/SL(2)\to
\bM$ which turns out to be a bijection and hence an isomorphism.
Thus we obtain the isomorphism \eqref{eq916-1} with $Q_0:=\bP_5$.
Furthermore, by the same argument, $\cC_3$ can be blown down twice
and $\ev_3$ factors through
\[
\xymatrix{ \cC_3\ar[r]\ar[d] &\cC_4\ar[r]\ar[d] &\cC_5\ar@{=}[r]
\ar[d]&\cC
\ar[r]^{\ev}\ar[d]^{\pi} &\PP^r\\
\bP_3\ar[r] &\bP_4\ar[r] &\bP_5\ar@{=}[r] &Q_0 }
\]
Thus we have a natural morphism
$$\varphi_{\mathbf{f}}:\cO_{\PP^r\times Q_0}\to \mathbf{f}_*\cO_\cC=:\cE_0$$
where $\mathbf{f}=(\ev,\pi)$.

For a description of $\Gamma^1$ as the quotient of a smooth variety,
let us consider
\begin{equation}\lab{eq916-3}
\Theta^1:=\PP(\Sym^3(\CC^2)\otimes \CC^2)^s\times_{PGL(2)}
\PP(\CC^2\otimes \CC^{r+1})^s
\end{equation}
which is a $\PP(\Sym^3(\CC^2)\otimes \CC^2)^s$-bundle over
$Gr(2,r+1)$. The subscript $PGL(2)$ denotes the quotient by the
diagonal action of $PGL(2)$. Note that the stable part
$\PP(\Sym^3(\CC^2)\otimes \CC^2)^s$ is the stable part with
respect to the diagonal action on $\PP\Sym^3(\CC^2)^{\oplus 2}$
while the stable part $\PP(\CC^2\otimes \CC^{r+1})^s$ is with
respect to the diagonal action on $\PP\left( (\CC^2)^{\oplus
r+1}\right)$. We have $\Theta^1/ SL(2)\cong \cM_0(\PP\cU,3)$
because by \cite{KM}, \[ \PP(\Sym^3(\CC^2)\otimes
\CC^2)^s/SL(2)\cong \cM_0(\PP^1,3) .\] Further, the obvious
composition map
\begin{equation}\lab{eq916-2}\PP\Hom(\Sym^3(\CC^2), \CC^2)\times
\PP\Hom(\CC^2,\CC^{r+1})\lra \PP\Hom(\Sym^3(\CC^2),
\CC^{r+1})\end{equation} induces a morphism $\Theta^1\to \bP_0$.
It is straightforward to keep track of the blow-ups and -downs of
$\bP_0$ and see that we obtain an injective morphism
\[
\Theta^1\hookrightarrow Q_0
\]
whose quotient gives us the embedding $\Gamma^1\hookrightarrow \bM$.

Now we provide a description of the other component of the
indeterminacy locus via GIT. The set of homogeneous polynomials of
degree $1$, up to constant multiple, is $\PP^1$. Multiplication of
polynomials and composition \eqref{eq916-2} give us
\begin{equation}\lab{eq916-4}
[\PP^1\times \PP(\Sym^2(\CC^2)\otimes
\CC^2)]^s\times_{PGL(2)}\PP(\CC^2\otimes\CC^{r+1})^s\lra
\PP(\Sym^3(\CC^2)\otimes \CC^{r+1})^s=\bP_0 \end{equation} where
the stability of the first factor is with respect to the diagonal
action on $\cO(1,1)$ of $\PP(\CC^2)\times \PP
\Sym^2(\CC^2)^{\oplus 2}$ while the stability on the second factor
is with respect to the diagonal $SL(2)$-action on $\PP\left(
(\CC^2)^{\oplus r+1}\right)$. It is also a straightforward
exercise to keep track of this morphism through the blow-ups and
-downs of $\bP_0$. The first blow-up $\pi_1$ corresponds to the
blow-up along
\begin{equation}\lab{eq916-7}(\PP^1\times
(\PP(\Sym^2\CC^2)\times\PP^1))^s\times_{PGL(2)}\PP(\CC^2\otimes\CC^{r+1})^s.\end{equation}
The second blow-up $\pi_2$ is the identity map because the blow-up
center is a smooth divisor. Then as in the proof of \cite[Lemma
5.4]{KM}, we obtain an equivariant embedding
\begin{equation}\lab{eq916-6}
[\PP^1\times bl_{\PP^2\times\PP^1}\PP(\Sym^2(\CC^2)\otimes
\CC^2)]^s\times_{PGL(2)}\PP(\CC^2\otimes\CC^{r+1})^s\hookrightarrow
\bP_2 .
\end{equation}
Since the image of \eqref{eq916-6} is contained in the blow-up
center, the third blow-up $\pi_3$ gives us a $\PP^{r-1}$-bundle
over the image of \eqref{eq916-6}. By the construction of stable
maps \cite[\S5]{KM}, this bundle parameterizes all possible stable
maps whose images are two lines. The first blow-down doesn't make
any change while the second blow-down contracts the exceptional
divisor of the first blow-up. Therefore, we obtain a
$\PP^{r-1}$-bundle over $$[\PP^1\times \PP(\Sym^2(\CC^2)\otimes
\CC^2)]^s\times_{PGL(2)}\PP(\CC^2\otimes\CC^{r+1})^s$$ which we
denote by $\Theta^2$. Thus, we obtain a smooth subvariety
\[
\Theta^2\hookrightarrow Q_0
\]
whose quotient gives us the locus of stable maps $\Gamma^2$ with
bilinear image.

Let us now consider the normal bundle of $\Theta^1$. The normal
bundle of a fiber $\PP(\Sym^3(\CC^2)\otimes \CC^2)^s$ of $\Theta^1$
over $Gr(2,r+1)$ in $Q_0$ is the pull-back of the normal bundle of
$\cM_0(\PP^1,3)$ in $\bM=\cM_0(\PP^r,3)$ i.e.
$$\pi_*\ev^*N_{\PP^1/\PP^r}=(\pi_*\ev^*\cO_{\PP^1}(1))^{\oplus r-1}$$
where
\begin{equation}\label{eqb-1}
\xymatrix{\cC\ar[r]^{\ev}\ar[d]_\pi & \PP^1\\
\PP(\Sym^3(\CC^2)\otimes \CC^2)^s}
\end{equation}
is the family of stable maps to $\PP^1$. \begin{lemm}
$$\pi_*\ev^*\cO_{\PP^r}(1)\cong \cO_{\PP^7}^{\oplus 2}\oplus
\cO_{\PP^7}(-1)^{\oplus 2}.
$$\end{lemm}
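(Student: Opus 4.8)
The plan is to reduce the computation to two elementary direct images via the relative Euler sequence, and then to evaluate those by pushing the universal stable map down to the trivial source $\PP^1$-bundle.

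First I would pull the Euler sequence $0\to\cO_{\PP^1}(-1)\to\cO_{\PP^1}^{\oplus2}\to\cO_{\PP^1}(1)\to0$ on the target $\PP^1$ back along $\ev$ and push it forward along $\pi$. For every genus-$0$ degree-$3$ stable map $\phi\colon C\to\PP^1$ the bundle $\phi^*\cO_{\PP^1}(1)$ has non-negative degree on each component, so $h^1(\phi^*\cO_{\PP^1}(1))=0$; dually $\phi^*\cO_{\PP^1}(-1)$ has non-positive degree on each component, and a global section, being zero on every non-contracted component, is forced by the node conditions to vanish on the contracted subtrees as well, so $h^0(\phi^*\cO_{\PP^1}(-1))=0$ and $h^1(\phi^*\cO_{\PP^1}(-1))=-\chi=2$. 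Together with $R^1\pi_*\cO_{\cC}=0$, the long exact sequence collapses to
\[
0\longrightarrow\cO_{\PP^7}^{\oplus2}\longrightarrow\pi_*\ev^*\cO_{\PP^1}(1)\longrightarrow R^1\pi_*\ev^*\cO_{\PP^1}(-1)\longrightarrow0,
\]
with $R^1\pi_*\ev^*\cO_{\PP^1}(-1)$ locally free of rank $2$. (Everything here lives over $\bP_0=\PP(\Sym^3\CC^2\otimes\CC^2)^s$, whose complement in $\PP^7$ is the unstable locus, of codimension $3$; line bundles and the displayed sequence extend uniquely to $\PP^7$.) So it remains to identify the quotient with $\cO_{\PP^7}(-1)^{\oplus2}$ and to split the sequence.

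The heart of the matter is the quotient. Let $c\colon\cC\to\PP^1\times\bP_0$ be the morphism over $\bP_0$ contracting the rational tails of each fibre onto the distinguished source $\PP^1$ (trivial as a bundle since the source $\CC^2$ is fixed); this is part of the construction of the universal family in \cite{KM}. Its exceptional locus maps into the incidence variety $I=\{(p,[f]):p\text{ is a base point of }[f]\}$, of codimension $2$ in $\PP^1\times\bP_0$, and $c$ is an isomorphism off $I$. Off $I$ the sheaf $c_*\ev^*\cO_{\PP^1}(-1)$ is the image of the tautological ``universal cubics'' map $\cO_{\PP^1}(-3)\boxtimes\cO_{\bP_0}(-1)\to\cO^{\oplus2}$, hence equals $\cO_{\PP^1}(-3)\boxtimes\cO_{\bP_0}(-1)$ there. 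On the other hand, pushing the pulled-back Euler sequence forward by $c$ realizes $\cO^{\oplus2}/c_*\ev^*\cO_{\PP^1}(-1)$ inside the torsion-free sheaf $c_*\ev^*\cO_{\PP^1}(1)$, so $c_*\ev^*\cO_{\PP^1}(-1)$ is a saturated rank-$1$ subsheaf of a locally free sheaf on the smooth variety $\PP^1\times\bP_0$, hence reflexive, hence a line bundle; agreeing with $\cO_{\PP^1}(-3)\boxtimes\cO_{\bP_0}(-1)$ off the codimension-$2$ set $I$, it equals it everywhere. Finally, in the Leray spectral sequence for $\pi=p_2\circ c$ the sheaves $R^{\ge1}c_*\ev^*\cO_{\PP^1}(-1)$ are supported over the locus where the stable map carries a multiply covered tail, of codimension $\ge2$ in $\bP_0$, so their contribution to the locally free sheaf $R^1\pi_*\ev^*\cO_{\PP^1}(-1)$ is torsion and vanishes; therefore
\[
R^1\pi_*\ev^*\cO_{\PP^1}(-1)=R^1p_{2*}\bigl(\cO_{\PP^1}(-3)\boxtimes\cO_{\bP_0}(-1)\bigr)=H^1(\PP^1,\cO(-3))\otimes\cO_{\bP_0}(-1)\cong\cO_{\PP^7}(-1)^{\oplus2}.
\]
With the quotient in hand, the sequence of the first paragraph represents a class in $\Ext^1(\cO_{\PP^7}(-1)^{\oplus2},\cO_{\PP^7}^{\oplus2})=H^1(\PP^7,\cO(1))^{\oplus4}=0$, so it splits and $\pi_*\ev^*\cO_{\PP^1}(1)\cong\cO_{\PP^7}^{\oplus2}\oplus\cO_{\PP^7}(-1)^{\oplus2}$.

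The hard part will be the middle step: checking that the contraction $c$ has the stated properties over all of $\bP_0$---in particular over the deepest strata $\bar\Sigma^2$ and $\Sigma^3$, where the tails themselves degenerate---and controlling $R^{\ge1}c_*\ev^*\cO_{\PP^1}(-1)$ there; the Euler-sequence bookkeeping and the splitting are formal. (Alternatively one could identify $R^1\pi_*\ev^*\cO_{\PP^1}(-1)$ with $\bigl(\pi_*(\ev^*\cO_{\PP^1}(1)\otimes\omega_\pi)\bigr)^{\vee}$ by relative Serre duality and argue similarly, but working with $\cO_{\PP^1}(-1)$ avoids the relative dualizing sheaf.)
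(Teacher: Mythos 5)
Your proof is correct, and it takes a genuinely different route from the paper's. The paper extends $\pi_*\ev^*\cO_{\PP^1}(1)$ to a rank-$4$ bundle $E$ on $\PP^7$ and invokes Horrocks' splitting criterion: pushing forward $0\to H\to \mu^*\cO(3,1)\to \mu^*\cO(3,1)|_{\mu^{-1}(\tilde\Sigma)}\to 0$ yields $0\to E(k)\to \cO_{\PP^7}(k+1)^{\oplus 4}\to \nu_*\cO_{\PP^1\times\PP^5}(k+4,k+1)\to 0$, and the whole lemma reduces to the surjectivity of the restriction map $\gamma$ on global sections for all twists $k$, checked by writing the eight coordinates as explicit bilinear expressions in $z_j,y_k$ and inducting. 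You instead split off the two trivial summands at the outset via the Euler sequence of the target and identify the quotient $R^1\pi_*\ev^*\cO_{\PP^1}(-1)$ with $\cO(-1)^{\oplus 2}$ by descending to $\PP^1\times P_0$; the splitting is then the formal vanishing of $H^1(\cO(1))$. Both arguments rest on the same geometric input, which the paper supplies explicitly: $\cC$ is the blow-up $\mu:\cC\to\PP^1\times P_0$ along the smooth codimension-two incidence variety $\tilde\Sigma$, with $\ev^*\cO_{\PP^1}(1)=\mu^*\cO(3,1)\otimes\cO(-\mu^{-1}(\tilde\Sigma))$; so the step you flag as the hard part is in fact immediate: writing $E'=\mu^{-1}(\tilde\Sigma)$, the sequence $0\to\cO\to\cO(E')\to\cO_{E'}(E')\to 0$ and the vanishing of both direct images of the relative $\cO(-1)$ along the $\PP^1$-bundle $E'\to\tilde\Sigma$ give $\mu_*\cO(E')=\cO$ and $R^1\mu_*\cO(E')=0$, hence $c_*\ev^*\cO_{\PP^1}(-1)=\cO(-3,-1)$ and $R^1c_*\ev^*\cO_{\PP^1}(-1)=0$ outright, which is cleaner than your saturation and codimension estimates. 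One point you should make explicit: the assertion that the contribution of $R^{\ge 1}c_*$ is ``torsion and vanishes'' needs the observation that an injection of rank-two locally free sheaves whose cokernel is supported in codimension at least two is an isomorphism (its determinant is a section of a line bundle with no zeros in codimension one, hence no zeros at all). Your route buys a more structural proof that avoids Horrocks and the explicit polynomial surjectivity check; the paper's is more computational but verifies everything by hand.
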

\begin{proof}
Let $P_0=\PP(\Sym^3(\CC^2)\otimes \CC^2)^s$. Since the complement of
$P_0$ has codimension 3, the vector bundle $\pi_*ev^*\cO(1)$ on
$P_0$ extends uniquely to a vector bundle $E$ on $\PP^7$. By the
splitting criterion of Horrocks \cite[Theorem 2.3.1]{OSS}, it
suffices to prove that \begin{enumerate}\item $H^i(\PP^7,E(k))=0$
for $i=1,2,\cdots, 6$ and $k\in \ZZ$.
\item The Hilbert polynomial $H^0(\PP^7,E(k))$ of $E$ coincides with that
of $\cO_{\PP^7}^{\oplus 2}\oplus \cO_{\PP^7}(-1)^{\oplus 2}.$
\end{enumerate}
The first condition tells us that the bundle $E$ splits into a
direct sum of line bundles and the second condition shows that the
line bundles are two $\cO$s and two $\cO(-1)$s.

By \cite[Lemma 5.3]{KM}, the locus $\bar\Sigma^1$ of at least one
common zeros is a divisor in $P_0$ and the locus of indeterminacy of
the birational map \cite[(5.30)]{KM}
$$\PP^1\times P_0\dashrightarrow \PP^1$$
is the normalization $\tilde\Sigma$ of $\bar\Sigma^1$, which is
isomorphic to the stable part of $\PP^1\times
\PP(\Sym^2(\CC^2)\otimes\CC^2)$ with respect to the linearization
$\cO(1,1)$. Hence $\cC$ is the blow-up of $\PP^1\times P_0$ along
$\tilde\Sigma$ and the family of stable maps \eqref{eqb-1} is
obtained by the surjective homomorphism $$\cO_{\cC}^{\oplus 2}\lra
\mu^*\cO_{\PP^1\times P_0}(3,1)\otimes
\cO(-\mu^{-1}(\tilde\Sigma))=:H$$ induced from the evaluation
$\cO_{\PP^1\times P_0}^{\oplus 2}\to \cO_{\PP^1\times P_0}(3,1)$.
Here $\mu:\cC\to \PP^1\times P_0$ is the blow-up map. See
\cite{KM} for more details.

By construction, $\ev^*\cO_{\PP^1}(1)=H$ and from the exact sequence
\[
0\lra H\lra \mu^*\cO(3,1)\lra
\mu^*\cO(3,1)|_{\mu^{-1}(\tilde\Sigma)}\lra 0
\]
we obtain an exact sequence on $\PP^1\times P_0$
\[
0\lra \mu_*H\lra \cO(3,1)\lra \cO(3,1)|_{\tilde\Sigma}\lra 0.
\]
Since the pullback of $\cO_{P_0}(1)$ to $\tilde\Sigma=[\PP^1\times
\PP(\Sym^2(\CC^2)\otimes\CC^2)]^s$ is $\cO(1,1)$, we have
$$\cO(3,1)|_{\tilde\Sigma}\cong \cO_{\PP^1\times\PP^5}(4,1).$$
Let $\nu:\tilde\Sigma\to \bar\Sigma^1$ be normalization map. Then by
taking the direct images with respect to the projection to $P_0$, we
obtain an exact sequence
\[
0\lra \pi_*\ev^*\cO_{\PP^1}(1)\lra \cO_{P_0}(1)^{\oplus 4}\lra
\nu_*(\cO(3,1)|_{\tilde\Sigma})\lra 0
\]
because $R^1\pi_*\ev^*\cO_{\PP^1}(1)=0$. After tensoring with
$\cO_{\PP^7}(k)$, we obtain
\[
0\lra \pi_*\ev^*\cO_{\PP^1}(1)\otimes \cO_{\PP^7}(k)\lra
\cO_{P_0}(k+1)^{\oplus 4}\lra
\nu_*(\cO_{\PP^1\times\PP^5}(k+4,k+1))\lra 0.
\]
Since the codimension of $\PP^7-P_0$ is 3, we thus obtain the long
exact sequence
\[
0\lra H^0(\PP^7,E(k))\lra H^0(\PP^7,\cO_{\PP^7}(k+1))^{\oplus
4}\mapright{\gamma} H^0(\PP^1\times\PP^5,\cO(k+4,k+1)) \]
\[
\lra H^1(\PP^7,E(k))\lra H^1(\PP^7,\cO_{\PP^7}(k+1))^{\oplus 4}\lra
\cdots
\]
Clearly, $H^i(\PP^7,\cO_{\PP^7}(k+1))^{\oplus 4}=0$ for
$i=1,2,\cdots, 6$ and $H^i(\PP^1\times\PP^5,\cO(k+4,k+1))=0$ for
$i=1,2,\cdots,5$. Also, by an elementary calculation, we have
\[
\dim H^0(\PP^7,\cO_{\PP^7}(k+1))^{\oplus 4} -\dim
H^0(\PP^1\times\PP^5,\cO(k+4,k+1))= \dim H^0(\PP^7,\cO(k)\oplus
\cO(k-1))^{\oplus 2}.
\]
Therefore, the lemma follows if $\gamma$ is surjective. Indeed, the
space $H^0(\PP^7,\cO_{\PP^7}(k+1))$ consists of homogeneous
polynomials of degree $k+1$ in eight variables $x_1,\cdots, x_8$ and
$H^0(\PP^1\times\PP^5,\cO(k+4,k+1))$ consists of bihomogeneous
polynomials in $z_0,z_1$ and $y_1,\cdots, y_6$ of bidegree
$(k+4,k+1)$. From the definition of the map $\nu$, we see that
\[ x_1=z_0y_1, x_2=z_0y_2+z_1y_1, x_3=z_0y_3+z_1y_2, x_4=z_1y_3,\]
\[ x_5=z_0y_4, x_6=z_0y_5+z_1y_4, x_7=z_0y_6+z_1y_5, x_8=z_1y_6 .\]
By induction on $k$, it is easy to check that $\gamma$ is indeed a
surjection.
\end{proof}

Consequently, the normal bundle in $Q_0$ of
$\PP(\Sym^3(\CC^2)\otimes \CC^2)^s$, which is a fiber of
$\Theta^1\to Gr(2,r+1)$, is
$$\cO_{\PP^7}^{\oplus 2r-2}\oplus \cO_{\PP^7}(-1)^{\oplus 2r-2}.$$
Obviously the factor $\cO^{\oplus 2r-2}$ is the pullback of the
tangent space of $Gr(2,r+1)$ and hence the normal bundle to
$\Theta^1$ in $Q_0$ restricted to $\PP(\Sym^3(\CC^2)\otimes
\CC^2)^s$ is
\[
\cO_{\PP^7}(-1)^{\oplus 2r-2}.
\]

We summarize the above discussions as follows.
\begin{coro}\label{cor-b3}
The indeterminacy locus of $\Phi$ is $\Gamma^1\cup \Gamma^2$ with
$\Gamma^1=\Theta^1/SL(2)$, $\Gamma^2=\Theta^2/SL(2)$ where
$\Theta^1$ is a $\PP(\Sym^3(\CC^2)\otimes \CC^2)^s$-bundle over
$Gr(2,r+1)$ and $\Theta^2$ is a $\PP^{r-1}$-bundle over a
$[\PP^1\times \PP(\Sym^2(\CC^2)\otimes \CC^2)]^s$-bundle over
$Gr(2,r+1)$. The normal bundle to $\Theta^1$ in $Q_0$ restricted to
$\PP(\Sym^3(\CC^2)\otimes \CC^2)^s$ is $\cO_{\PP^7}(-1)^{\oplus
2r-2}.$
\end{coro}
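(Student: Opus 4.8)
The plan is to assemble Corollary \ref{cor-b3} from the pieces already established in this subsection, so the "proof" is mostly a matter of organizing what we have shown rather than proving anything new. First I would recall that in \S4.1 we identified the indeterminacy locus of $\Phi:\bM\dashrightarrow\bS$ with the locus of unstable sheaves in the family $\cE_0$, which by the lemma on stable maps with no multiple components has exactly two irreducible components $\Gamma^1$ (linear image) and $\Gamma^2$ (bilinear image). Then I would invoke the GIT descriptions from \S4.2: the embedding $\Theta^1\hookrightarrow Q_0$ coming from the composition \eqref{eq916-2}, together with the fact that $\Theta^1$ is by construction \eqref{eq916-3} a $\PP(\Sym^3(\CC^2)\otimes\CC^2)^s$-bundle over $Gr(2,r+1)$ and that $\Theta^1/SL(2)\cong\cM_0(\PP\cU,3)=\Gamma^1$. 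Similarly I would recall that tracking the map \eqref{eq916-4} through the blow-ups $\pi_1,\pi_2,\pi_3$ and the two blow-downs $\pi_4,\pi_5$ produces the smooth subvariety $\Theta^2\hookrightarrow Q_0$, realized as a $\PP^{r-1}$-bundle over the $[\PP^1\times\PP(\Sym^2(\CC^2)\otimes\CC^2)]^s$-bundle over $Gr(2,r+1)$, whose quotient is $\Gamma^2$.

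Next I would address the normal bundle statement, which is the only substantive computation and the one place where the preceding lemma is used. The argument is: the normal bundle of a fiber $\PP(\Sym^3(\CC^2)\otimes\CC^2)^s$ of $\Theta^1\to Gr(2,r+1)$ inside $Q_0$ is the pullback of the normal bundle of $\cM_0(\PP^1,3)$ in $\cM_0(\PP^r,3)$, which equals $\pi_*\ev^*N_{\PP^1/\PP^r}=(\pi_*\ev^*\cO_{\PP^1}(1))^{\oplus r-1}$ for the universal family \eqref{eqb-1}. By the lemma we have $\pi_*\ev^*\cO_{\PP^r}(1)\cong\cO_{\PP^7}^{\oplus 2}\oplus\cO_{\PP^7}(-1)^{\oplus 2}$ — note this should read $\ev^*\cO_{\PP^1}(1)$ as in the displayed computation — so the normal bundle of the fiber is $\cO_{\PP^7}^{\oplus 2r-2}\oplus\cO_{\PP^7}(-1)^{\oplus 2r-2}$. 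Finally I would observe that the $\cO^{\oplus 2r-2}$ summand is exactly the pullback of $T Gr(2,r+1)$ along the bundle projection (since $\Theta^1$ fibers over the Grassmannian with $\PP^7$-fibers, and deforming the ambient $\PP^1\cong\PP\cU_x$ inside $\PP^r$ is governed by $T_x Gr(2,r+1)$), so that quotienting out this trivial part leaves $\cO_{\PP^7}(-1)^{\oplus 2r-2}$ as the normal bundle of $\Theta^1$ itself restricted to the fiber.

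The only real subtlety, and the step I expect to be the main obstacle, is justifying the identification of the $\cO^{\oplus 2r-2}$ summand with the pulled-back tangent bundle of the Grassmannian — i.e. separating the "normal directions inside a fixed $\PP^3$-worth-of-$\PP^1$'s" from the "normal directions that move the linear span." This requires knowing that the deformations of a degree-3 stable map to $\PP^r$ whose image spans a line, in directions transverse to $\Gamma^1$, split naturally into (a) deformations keeping the image in the span, accounted for by $\cM_0(\PP^1,3)\subset\cM_0(\PP^r,3)$ with the excess pushing the curve off the line, and (b) deformations of the line itself, i.e. of the point of $Gr(2,r+1)$. One can make this precise by looking at the normal sequence $0\to N_{\PP^1/\PP\cU}\to N_{\PP^1/\PP^r}\to N_{\PP\cU/\PP^r}|_{\PP^1}\to 0$, pushing forward along $\pi$, and noting $N_{\PP^1/\PP\cU}=0$ while $\pi_*(N_{\PP\cU/\PP^r}|_{\PP^1})$ is the pullback of $TGr(2,r+1)$ (a trivial bundle over the fixed $\PP^7$-fiber). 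Everything else is bookkeeping of the GIT constructions from \cite{KM} and \S4.2, which I would cite rather than reproduce.
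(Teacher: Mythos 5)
Your proposal is correct and follows essentially the same route as the paper, which itself presents this corollary explicitly as a summary of \S4.1--4.2: the two-component description of the unstable locus, the GIT constructions of $\Theta^1$ and $\Theta^2$, and the normal-bundle computation via $\pi_*\ev^*N_{\PP^1/\PP^r}=(\pi_*\ev^*\cO_{\PP^1}(1))^{\oplus r-1}$ combined with the splitting lemma are exactly the paper's argument. Your added justification that the trivial summand $\cO^{\oplus 2r-2}$ is the pullback of $TGr(2,r+1)$ (a step the paper dismisses as ``obvious'') is sound, and you correctly note the typo $\cO_{\PP^r}(1)$ versus $\cO_{\PP^1}(1)$ in the lemma statement.
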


\bigskip

\subsection{Blow-ups}
By adjunction, we have a natural homomorphism
$\varphi_f:\cO_{\PP^r}\to f_*\cO_C$ for any stable map $f:C\to
\PP^r$. The image of $\varphi_f$ is the structure sheaf $\cO_{C'}$
of a curve $C'$ in $\PP^r$. If $f\in \Gamma^1\cup \Gamma^2$,
$\cO_{C'}$ is a destabilizing subsheaf of $f_*\cO_C$ because \[
\frac{\chi(\cO_{C'}(m))}{r(\cO_{C'})}=m+1\text{ or }\frac{2m+1}2 >
\frac{3m+1}3=\frac{\chi(f_*\cO_C(m))}{r(f_*\cO_C)}\]
\begin{defi} A subsheaf $F$ of a coherent sheaf $E$ on $\PP^r$ is
the \emph{destabilizing subsheaf} if it is the first nonzero term
$E_1$ in the Harder-Narasimhan filtration $0\subset E_1\subset
\cdots\subset E_k=E$ and $F\ne E$. The quotient $E/F$ by the
destabilizing subsheaf $F$ is called the \emph{destabilizing
quotient}.
\end{defi} See \cite{HL} for fundamental results on
Harder-Narasimhan filtration.

\begin{lemm} $\cO_{C'}$ is the destabilizing subsheaf of $f_*\cO_C$
when $f_*\cO_C$ is not stable, i.e. $f\in \Gamma^1\cup \Gamma^2$.
\end{lemm}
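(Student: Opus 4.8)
The plan is to verify that $\cO_{C'}$, where $C'$ is the scheme-theoretic image of $f$, is genuinely the first step of the Harder--Narasimhan filtration of $f_*\cO_C$, not merely \emph{a} destabilizing subsheaf. First I would recall the two cases from the preceding discussion. When $f\in\Gamma^1$, the image $C'$ is a line $L$, so $\cO_{C'}=\cO_L$ has reduced Hilbert polynomial $m+1$; the quotient $f_*\cO_C/\cO_L$ is then a sheaf with Hilbert polynomial $(3m+1)-(m+1)=2m$, which (being the direct image of a degree-$2$ map to the line) has the form $\cO_{L'}$ or an extension involving a structure sheaf of a conic/pair of lines, with reduced Hilbert polynomial $\le m+1$. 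When $f\in\Gamma^2$, the image is a pair of lines, $\cO_{C'}$ has Hilbert polynomial $2m+1$, and the quotient $f_*\cO_C/\cO_{C'}$ has Hilbert polynomial $m$, hence is (supported on a line and) of reduced Hilbert polynomial $\le m+1$; in either case the slope of the quotient is strictly smaller than that of $\cO_{C'}$.

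The key point to establish is that $\cO_{C'}$ is the \emph{maximal destabilizing} subsheaf, i.e. that it contains every subsheaf of $f_*\cO_C$ of slope $\ge$ its own. For this I would argue as follows. Any subsheaf $G\subset f_*\cO_C$ that strictly contains $\cO_{C'}$ must surject onto a nonzero subsheaf of the quotient $Q:=f_*\cO_C/\cO_{C'}$; but $Q$ is a torsion sheaf of dimension $\le 1$ whose reduced Hilbert polynomial is strictly less than that of $\cO_{C'}$ (computed above), so the slope of $G$ is a weighted average of the slope of $\cO_{C'}$ and that of $\image(G\to Q)\subset Q$, hence strictly less than the slope of $\cO_{C'}$. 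Conversely, any subsheaf $G$ not contained in $\cO_{C'}$ again surjects onto a nonzero subsheaf of $Q$, and one checks (using that $\cO_{C'}$ is stable, being Cohen--Macaulay of degree $\le 2$ in the case $\Gamma^1$, or a pair of reduced lines in the case $\Gamma^2$ — here one should note $\cO_{C'}$ is pure one-dimensional and its subsheaves have reduced Hilbert polynomial $<$ that of $\cO_{C'}$ only if proper) that $G\cap\cO_{C'}$ has slope $\le$ slope of $\cO_{C'}$, so once more the slope of $G$ is pulled down by the $Q$-part. Thus no subsheaf of $f_*\cO_C$ has slope $\ge$ that of $\cO_{C'}$ except possibly subsheaves \emph{of} $\cO_{C'}$ itself, all of which have strictly smaller slope by stability of $\cO_{C'}$; therefore $\cO_{C'}$ is the maximal destabilizing subsheaf, i.e. the first term $E_1$ of the Harder--Narasimhan filtration, and $E_1\ne f_*\cO_C$ since $Q\ne 0$.

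The main obstacle I anticipate is the case-checking for $f\in\Gamma^2$: there $C'$ is a union of two (possibly coincident-as-lines but distinct-as-components) lines meeting at a point, so $\cO_{C'}$ is pure one-dimensional but \emph{not} stable as an abstract sheaf in the same clean way as a line — one must be careful that its own reduced Hilbert polynomial $\tfrac{2m+1}{2}=m+\tfrac12$ dominates that of the quotient $Q$ (reduced polynomial $m+1$ or $m+\tfrac12$, but with smaller leading coefficient so still strictly smaller reduced slope in the lexicographic ordering used for HN filtrations). One also has to handle the subtlety that $f_*\cO_C$ itself may have additional torsion when $f$ is not an immersion onto $C'$; but the exact sequence $0\to\cO_{C'}\to f_*\cO_C\to \cO_p\to 0$ (or its $\Gamma^2$ analogue $0\to\cO_{C'}\to f_*\cO_C\to Q\to 0$ with $Q$ supported on the line that is doubly covered) isolates exactly this torsion, and since $\Hom(\cO_p, f_*\cO_C)=\Hom(f^*\cO_p,\cO_C)=0$ as in the previous lemma's proof, no torsion subsheaf can interfere with the slope comparison. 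I expect the argument to close cleanly once the Hilbert polynomials are tabulated explicitly in each of the two strata.
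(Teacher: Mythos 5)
Your overall strategy --- exhibit the exact sequence $0\to\cO_{C'}\to f_*\cO_C\to Q\to 0$, check that $\cO_{C'}$ is semistable and that every subsheaf of $Q$ has strictly smaller reduced Hilbert polynomial, and conclude by uniqueness of the Harder--Narasimhan filtration --- is sound and close in spirit to the paper's proof. The paper, however, first pins the sheaf down completely: for $f\in\Gamma^1$ it shows $f_*\cO_C\cong\cO_L\oplus\cO_L(-1)^{\oplus2}$ (the splitting type on $L\cong\PP^1$ is forced by $\chi(f_*\cO_C)=1$ together with $h^0(f_*\cO_C)=h^0(\cO_C)=1$), and for $f\in\Gamma^2\setminus\Gamma^1$ it identifies the restrictions to the two lines, so that $Q\cong\cO_L(-1)^{\oplus2}$, resp.\ $Q\cong\cO_{L_2}(-1)$. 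This extra precision is not wasted: the splitting \eqref{eq-b4} is used again in the proof of Lemma \ref{lem4}.

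That said, your slope bookkeeping contains errors which, taken at face value, would sink the argument. In the $\Gamma^1$ case the quotient $f_*\cO_C/\cO_L$ is \emph{not} the direct image of a degree-$2$ map and has nothing to do with conics or pairs of lines: it is $\cO_L(-1)^{\oplus2}$, supported on the same line $L$, with Hilbert polynomial $2m$ and reduced Hilbert polynomial $m$; the bound ``$\le m+1$'' you assert is not strict and would not suffice. More seriously, in the $\Gamma^2$ case the quotient is $\cO_{L_2}(-1)$, whose reduced Hilbert polynomial is $m$, not ``$m+1$ or $m+\tfrac12$''; if it really were $m+1>m+\tfrac12$ the lemma would be \emph{false}, since successive HN quotients must have strictly decreasing slopes. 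Your attempted rescue via ``smaller leading coefficient, hence smaller in the lexicographic ordering'' is not how Simpson slopes are compared: the reduced Hilbert polynomial $\chi(E(m))/r(E)$ is already normalized to leading coefficient $1$, and one simply compares constant terms, $m<m+\tfrac12<m+1$. Once the quotients are correctly identified --- which requires the small computation the paper carries out --- your HN argument closes as intended.
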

\begin{proof}
For $f\in \Gamma^1$, $f_*\cO_C$ is supported on a line $L$ in
$\PP^r$ and hence $f_*\cO_C\cong \cO_L(a)\oplus \cO_L(b)\oplus
\cO_L(c)$ for $a\ge b\ge c$. Since the Hilbert polynomial of
$f_*\cO_C$ is $3m+1$, $a+b+c=-2$. Also, $f_*\cO_C$ admits a unique
section $\varphi_f$ and hence $a=0,b=-1,c=-1$. Therefore, the image
$\cO_{C'}$ of $\varphi_f$ is the destabilizing subsheaf $\cO_L$.

For $f\in \Gamma^2-\Gamma^1$, $C'$ is the union of two lines $L_1$
and $L_2$ such that $f_*\cO_C|_{L_1}\cong \cO_{L_1}$ and
$f_*\cO_C|_{L_2}\cong \cO_{L_2}\oplus \cO_{L_2}(-1)$. As $\cO_{C'}$
is the gluing of $\cO_{L_1}$ and $\cO_{L_2}$ at a point, this is
certainly the destabilizing subsheaf of $f_*\cO_C$.
\end{proof}

From the above proof, we see that when $f\in \Gamma^1$,
\begin{equation}\label{eq-b4}
f_*\cO_C\cong \cO_L\oplus \cO_L(-1)\oplus \cO_L(-1)\end{equation}
for a line $L=f(C)$ in $\PP^r$. Note that the direct image sheaf
depends only on the image line $L$.

Let $q_1:Q_1\to Q_0$ be the blow-up along the smooth subvariety
$\Theta^1$ and let $p_1:\bM_1\to \bM$ be the quotient, i.e. the
blow-up along the subvariety $\Gamma^1$ (\cite[Lemma 3.11]{K4}). Let
$\Theta_1^1$ be the exceptional divisor and $\Theta_1^2$ be the
proper transform of $\Theta^2$. Let $\Gamma_1^i$ be the quotient of
$\Theta_1^i$ by $SL(2)$. We pull back $\varphi_{\mathbf{f}}$ to
$\PP^r\times Q_1$ and restrict it to the divisor $\PP^r\times
\Theta^1_1$. Let $A_1$ be its cokernel on $\PP^r\times \Theta^1_1$.
Let $\cE_1$ be the kernel of the composition
\[
(1\times q_1)^*\cE_0\lra (1\times q_1)^*\cE_0|_{\PP^r\times
\Theta^1_1}\lra A_1
\]

\begin{lemm}\lab{lem4}
\begin{enumerate}
\item $\cE_1$ is a flat family of
coherent sheaves on $\PP^r$ parameterized by $Q_1$.
\item The locus where $\cE_1$ is not stable has two irreducible
components $\Theta_1^2\cup \Theta^3_1$ where $\Theta_1^3$ is a
smooth subvariety of $\Theta^1_1$, which is a $\PP^1\times
\PP^{r-2}$-bundle over $\Theta^1$.
\end{enumerate}
\end{lemm}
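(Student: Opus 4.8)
The plan is to verify the two assertions by local analysis of the elementary modification $\cE_1 = \ker\bigl((1\times q_1)^*\cE_0 \to A_1\bigr)$. For part (1), flatness follows from the standard fact that an elementary modification along a Cartier divisor $D = \PP^r \times \Theta^1_1$ of a family $\cE$ flat over $Q_1$, taken with respect to a quotient $A_1$ that is itself flat over $\Theta^1_1$, is again flat over $Q_1$; this is because the sequence $0 \to \cE_1 \to (1\times q_1)^*\cE_0 \to A_1 \to 0$ stays exact under restriction to any fiber over $Q_1$ as soon as $\mathcal{T}or_1$ of $A_1$ with the fiber vanishes, which holds since $A_1$ is flat over $\Theta^1_1$. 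So the real content is to identify $A_1$ precisely, and this is where \eqref{eq-b4} enters: over a point of $\Theta^1_1$ lying above a line $L = f(C)$, the destabilizing quotient of $f_*\cO_C \cong \cO_L \oplus \cO_L(-1)^{\oplus 2}$ by its destabilizing subsheaf $\cO_{C'} = \cO_L$ is $\cO_L(-1)^{\oplus 2}$, and the cokernel $A_1$ of $\varphi_{\mathbf f}$ restricted to the divisor is therefore, fiberwise, a quotient of the form $\cO_L(-1)^{\oplus 2}$. More precisely, since the exceptional divisor $\Theta^1_1 = \PP(N^\vee)$ of the blow-up along $\Theta^1$ carries the tautological data, the homomorphism $\varphi_{\mathbf f}$ restricted to $\PP^r \times \Theta^1_1$ has image the (relative) structure sheaf of the universal line, and $A_1$ is a line-bundle-twisted version of $\cO_L(-1)^{\oplus 2}$ over the universal $\PP^1$; in particular $A_1$ is flat over $\Theta^1_1$, which gives (1).

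For part (2), I would compute the Harder–Narasimhan type of the fibers of $\cE_1$ over $Q_1$ and locate where it fails to be stable. Away from the exceptional divisor $\Theta^1_1$, nothing changes: $\cE_1$ agrees with $(1\times q_1)^*\cE_0$, so the non-stable locus there is exactly the proper transform $\Theta^2_1$ of $\Theta^2$ (by the Lemma identifying the indeterminacy locus and the fact that $\Gamma^2 \not\subset \Gamma^1$ generically). Over the exceptional divisor, one must re-examine the modified sheaf: a point of $\Theta^1_1$ records a line $L$ together with a normal direction, i.e. a one-parameter "smoothing" of the triple-line configuration, and the modification replaces $f_*\cO_C = \cO_L \oplus \cO_L(-1)^{\oplus 2}$ by an extension governed by that normal direction. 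The expectation is that for a generic normal direction the resulting sheaf becomes stable (the modification "uses up" the destabilizing quotient), while it stays non-stable precisely along a sublocus $\Theta^3_1$ where the normal direction is degenerate — concretely, where the two $\cO_L(-1)$ summands are not both killed, leaving an $\cO_L$ or $\cO_{L}(-1)$-quotient of too-large slope. Parametrizing: the choice of line $L$ and the configuration of the three source points on $\PP^1$ over it contributes a $\PP^1$ factor (the special point on $L$), and the choice of the residual destabilizing direction in the rank-2 bundle $\cO_L(-1)^{\oplus 2}$ contributes a $\PP^{r-2}$ factor, yielding the claimed $\PP^1 \times \PP^{r-2}$-bundle structure of $\Theta^3_1$ over $\Theta^1$. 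Smoothness of $\Theta^3_1$ follows because it is cut out inside the smooth divisor $\Theta^1_1$ by the vanishing of a section of a vector bundle in the expected codimension, or equivalently because it is manifestly a projective bundle over the smooth base $\Theta^1$.

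The main obstacle will be the bookkeeping in part (2): one must track explicitly, in GIT coordinates on $Q_1$ coming from the description of $\Theta^1$ in Corollary \ref{cor-b3}, how the elementary modification transforms the sheaf on the exceptional divisor, and then read off the precise Harder–Narasimhan jumping locus. This is a fiberwise linear-algebra computation over the universal line, but it requires care to separate the "honest smoothing" directions (where stability is achieved) from the degenerate ones, and to check that the degenerate locus has exactly the asserted bundle structure and no embedded or extra components. The normal bundle computation $\cO_{\PP^7}(-1)^{\oplus 2r-2}$ from Corollary \ref{cor-b3} is what makes the exceptional divisor $\Theta^1_1$ a $\PP^{2r-3}$-bundle over $\Theta^1$, and the $\PP^1 \times \PP^{r-2}$ inside each $\PP^{2r-3}$ fiber should fall out of the splitting of this normal bundle once the modification is written down; verifying that identification cleanly is the crux.
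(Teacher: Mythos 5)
Your overall strategy matches the paper's: take the elementary modification along the exceptional divisor, prove flatness by a fiberwise argument, and then locate the residual unstable locus inside $\Theta^1_1$. Part (1) is fine: the paper gets flatness from the constancy of the Hilbert polynomial of the modified fibers, while you get it from flatness of $A_1$ over $\Theta^1_1$ plus exactness of the modification sequence on fibers; both work, and your identification of $A_1$ as the (twisted) $\cO_L(-1)^{\oplus 2}$-quotient is correct.

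Part (2), however, has a genuine gap and a misidentification. The content of the paper's proof is a commutative diagram identifying the normal space $N_{\Theta^1/Q_0,f}\cong \Hom(I_L,\cO_L(-1))^{\oplus 2}$ with $\Ext^1(\cO_L,\cO_L(-1))^{\oplus 2}$ \emph{via the Kodaira--Spencer map of the family} $\cE_0$; this isomorphism is what guarantees that the central fiber of $\cE_1$ at a point $[n]$ of the exceptional $\PP^{2r-3}=\PP\bigl(\Ext^1(\cO_L,\cO_L(-1))\otimes\CC^2\bigr)$ is exactly the extension of $\cO_L$ by $\cO_L(-1)^{\oplus 2}$ with class $n$. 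You defer this to ``bookkeeping,'' but without it one cannot conclude that generic normal directions yield stable sheaves, so it is the crux rather than a routine verification. Granting it, the unstable locus in each fiber is the set of \emph{decomposable} classes $v\otimes w$, i.e.\ the Segre variety $\PP\Ext^1(\cO_L,\cO_L(-1))\times\PP(\CC^2)=\PP^{r-2}\times\PP^1$, since a decomposable class gives $E\cong F\oplus\cO_L(-1)$ (the quotient $\cO_L(-1)$ has reduced Hilbert polynomial $m<m+\tfrac13$) while an indecomposable class gives a stable sheaf. Your parametrization of the two factors is wrong: the $\PP^1$ is $\PP(\CC^2)$, recording which rank-one piece of the normal summand $\cO_L(-1)^{\oplus 2}$ splits off, not ``the configuration of the three source points'' or ``a special point on $L$'' (there is no marked point on $L$ in this picture --- the unstable sheaf is $F\oplus\cO_L(-1)$ supported on the ribbon over $L$); and the $\PP^{r-2}$ is $\PP\Ext^1(\cO_L,\cO_L(-1))$, i.e.\ the choice of plane containing $L$ in which the ribbon $F$ thickens. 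Both factors live entirely in the projectivized normal space; neither involves the source curve.
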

\begin{proof}
Let $f:C\to L\subset \PP^r$ be a point in $\Theta^1$. If we denote
the graph of $f$ in $C\times \PP^r$ by $G$, it is well known that
the deformation space of the map $f$ with $C$ fixed is the same as
the tangent space at $G$
$$\Hom_{C\times\PP^r} (I_G,\cO_G)$$ of the Hilbert scheme of closed
subschemes in $C\times \PP^r$ where $I_G$ is the ideal sheaf of $G$.
From the short exact sequence
\[ 0\lra I_G\lra \cO_{C\times\PP^r}\lra \cO_G\lra 0\]
we obtain an isomorphism
\[ \Hom(I_G,\cO_G)\cong \Ext^1_{C\times \PP^r}(\cO_G,\cO_G)\]
because $H^1(\cO_G)=H^1(\cO_C)=0$. Since $I_G|_G\cong
f^*\Omega_{\PP^r}$ and $\cO_G\cong \cO_C$, we have isomorphisms
$$\Hom(I_G,\cO_G)\cong \Hom_C(f^*\Omega_{\PP^r}, \cO_C)\cong
\Hom_{\PP^r}(\Omega_{\PP^r}, f_*\cO_C).$$ The natural morphism
$I_L\mapright{d} \Omega_{\PP^r}$ and the projection $f_*\cO_C\to
\cO_L(-1)^{\oplus 2}$ by \eqref{eq-b4} induce the homomorphisms \[
\Hom(\Omega_{\PP^r},f_*\cO_C)\lra \Hom (I_L,f_*\cO_C)\lra
\Hom(I_L,\cO_L(-1))^2 \] where $I_L$ is the ideal sheaf of $L$ on
$\PP^r$. On the other hand, as $R^1p_*\cO_G=0$ where
$p:C\times\PP^r\to \PP^r$ is the projection, an extension of $\cO_G$
by itself gives us an extension of $p_*\cO_G\cong f_*\cO_C$ by
itself. Thus we have homomorphisms
\[
\Ext^1(\cO_G,\cO_G)\lra \Ext^1(f_*\cO_C,f_*\cO_C)\lra
\Ext^1(\cO_L,f_*\cO_C)\lra \Ext^1(\cO_L,\cO_L(-1))^2
\]
where the last two come from \eqref{eq-b4}. Then it is an easy
exercise to check the commutativity of the following diagram
\begin{equation}\label{eq-b33}
\xymatrix{ \Hom(I_G,\cO_G) \ar[r]^\cong \ar[d]_\cong & \Ext^1(\cO_G,\cO_G)\ar[d]\\
\Hom(\Omega_{\PP^r}, f_*\cO_C)\ar[d] & \Ext^1(f_*\cO_C,f_*\cO_C)\ar[d]\\
\Hom(I_L,f_*\cO_C)\ar[r]^\cong \ar[d] &
\Ext^1(\cO_L,f_*\cO_C)\ar[d]\\
\Hom(I_L,\cO_L(-1))^2\ar[r]^\cong & \Ext^1(\cO_L,\cO_L(-1))^2
}\end{equation} where the last two horizontal maps are from the
exact sequence $0\to I_L\to \cO_{\PP^r}\to \cO_L\to 0$.

From the exact sequence $$0\lra I_L/I_L^2\lra \Omega_{\PP^r}|_L\lra
\Omega_L\lra 0$$ we see that the normal space to the deformation
space of $f$ as a map to $L$ in the deformation space of $f$ as a
map to $\PP^r$ is exactly $\Hom(I_L,f_*\cO_C)$. Furthermore, the
tangent space to the Grassmannian is exactly $\Hom(I_L,\cO_L)$ and
hence the normal space of $\Theta^1$ in $Q_0$ at $f$ is
$$N_{\Theta^1/Q_0,f}=\Hom(I_L,\cO_L(-1))^2$$
which is the bottom left term in \eqref{eq-b33}.

The Kodaira-Spencer map for the family $\cE_0$ of sheaves
\begin{equation}\label{eq-b7} T_fQ_0\to
\Ext^1_{\PP^r}(f_*\cO_C,f_*\cO_C)\end{equation} sends any extension
$ \tilde f:\tilde
C=C\times\mathrm{Spec}\CC[\epsilon]/(\epsilon^2)\lra \PP^r $ of $f$
to the extension class of $ 0\lra \epsilon \cdot f_*\cO_C\lra \tilde
f_*\cO_{\tilde C}\lra f_*\cO_C\lra 0 $ obtained from the obvious
extension $0\lra \epsilon\cO_C\lra \cO_{\tilde C}\lra \cO_C\lra 0$
since $R^1f_*\cO_C=0$. By \eqref{eq-b4}, $f_*\cO_C$ remains fixed
along the fibers of $\Theta^1\to Gr(2,r+1)$ and the variation of $L$
is sent to the factor $\Ext^1(\cO_L,\cO_L)$. Hence we have an
induced homomorphism
\begin{equation}\label{eq-b5}
N_{\Theta^1/Q_0,f}=\Hom(I_L,\cO_L(-1))^2\lra
\Ext^1(\cO_L,\cO_L(-1))^2
\end{equation}
by the projection $\Ext^1_{\PP^r}(f_*\cO_C,f_*\cO_C)\to
\Ext^1(\cO_L,\cO_L(-1))^2.$ From the above discussions, we see that
the horizontal maps in \eqref{eq-b33} give us the Kodaira-Spencer
map and hence \eqref{eq-b5} is an isomorphism.

Let us consider the effect of elementary modification $\cE_1$.
Choosing a point in $Q_1$ lying over $f$ is the same as choosing a
normal vector to $\Theta^1$ at $f$. Over $\Spec
\CC[\epsilon]/(\epsilon^2)$, the process of taking $\cE_1$ from
$\cE_0$ is explained in the diagram
\[
\xymatrix{
& & 0 \ar[d] & 0 \ar[d] \\
0 \ar[r] & {\epsilon\cdot f_*\cO_C} \ar[r]\ar@{=}[d] & \cE_1
\ar[d]\ar[r]
&\cO_L \ar[r] \ar[d] &0\\
0\ar[r] &\epsilon\cdot f_*\cO_C\ar[r] & \cE_0 \ar[d]\ar[r]
&f_*\cO_C \ar[r] \ar[d] &0\\
& & A_1 \ar@{=}[r] \ar[d] & \cO_L(-1)^{\oplus 2} \ar[d] \\ & & 0 & 0
}
\]
Since $f_*\cO_C=\cO_L\oplus \cO_L(-1)^{\oplus 2}$, the central fiber
$\cE_1/\epsilon \cE_1$ fits into an exact sequence
\[
0\lra \cO_L(-1)^{\oplus 2} \lra \cE_1/\epsilon \cE_1\lra \cO_L\lra 0
\]
whose extension class is the image of \eqref{eq-b5}. In particular,
$f_*\cO_C\cong \cO_L\oplus \cO_L(-1)\oplus \cO_L(-1)$ and
$\cE_1|_{\PP^r\times\{y\}}$ is an extension of $\cO_L$ by
$\cO_L(-1)\oplus \cO_L(-1)$ for any $y\in Q_1$ lying over $f$. In
particular, the Hilbert polynomial is constantly $3m+1$ and hence
$\cE_1$ is flat over $Q_1$. Each element in $\PP
N_{\Theta^1/Q_0,f}\cong \PP \left(
\Ext^1_{\PP^r}(\cO_L,\cO_L(-1))\otimes \CC^2\right) $ parameterizes
an extension
\[ 0\to \cO_L(-1)\oplus \cO_L(-1)\to E\to \cO_L\to 0 \]
which is unstable if and only if it belongs to $\PP
\Ext^1_{\PP^r}(\cO_L,\cO_L(-1))\times \PP^1$, in which case
$E=F\oplus \cO_L(-1)$ for a stable sheaf $F$ which is a nontrivial
extension of $\cO_L$ by $\cO_L(-1)$.
\end{proof}

\begin{exam}\emph{
Let $f:\PP^1\to \PP^r$ be a stable map given by
$(z_0^3:z_1^3:0:\cdots:0)$ and consider a deformation of $f$ over
$\Spec \CC[\epsilon]/(\epsilon^2)$ given by the map
\[
(z_0^3:z_1^3:\epsilon z_0^2z_1:\epsilon  z_0z_1^2:0:\cdots :0).
\]
In affine charts, $\cE_0=f_*\cO_C$ is $\CC[t,\epsilon]$ which is a
module over $\CC[x_1,x_2,\cdots,x_r]$ by \[ x_1=t^3,\quad
x_2=\epsilon t,\quad x_3=\epsilon t^2,\quad x_4=\cdots =x_r=0\]
Then the central fiber of $\cE_0$ is $$\cE_0/\epsilon
\cE_0=\CC[t]\cong \CC[x_1]\oplus t\CC[x_1]\oplus t^2\CC[x_1]$$ and
the destabilizing subsheaf is $\CC[x_1]$. Hence we have
\[ \cE_1=\ker (\CC[t,\epsilon]\lra \CC[t]\lra t\CC[x_1]\oplus
t^2\CC[x_1])=\{g(x_1)+\epsilon h(t,\epsilon)\}
\]
where $g,h$ are polynomials. Hence the central fiber of $\cE_1$ is
\[
\cE_1/\epsilon \cE_1=\{g(x_1)+\epsilon t g_1(x_1) +\epsilon t^2
g_2(x_1)\} \cong \CC[x_1]\oplus x_2\CC[x_1]\oplus x_3\CC[x_1]\]
\[\cong
\CC[x_1,x_2,\cdots,x_r]/(x_2^2,x_2x_3,x_3^2, x_4,\cdots, x_r)
\]
i.e. the thickening of the line $x_2=x_3=\cdots =0$ in $\PP^3$ given
by $x_4=\cdots=0.$}\end{exam}
\begin{rema}\label{rem4.8}\emph{ By Corollary \ref{cor-b3}, $\Theta^1_1$ is a
$\PP^{2r-3}\times (\PP^7)^s$ bundle over $Gr(2,r+1)$. The proof of
Lemma \ref{lem4} tells us that the family of stable sheaves $\cE_1$
over $(\PP^{2r-3}-\PP^1\times\PP^{r-2})\times (\PP^7)^s$ remains
constant for the $(\PP^7)^s$ direction and depends only on the
extension classes in $\PP^{2r-3}=\PP\Ext^1(\cO_L,\cO_L(-1))^2$.
}\end{rema}

By Lemma \ref{lem4}, we have an invariant morphism
\[
\Psi_1:Q_1-\Theta_1^2\cup\Theta_1^3\lra \bS
\]
which induces a morphism
\[
\Phi_1:\bM_1-\Gamma_1^2\cup\Gamma_1^3\lra \bS
\]
where $\Gamma_1^i$ is the quotient of $\Theta^i_1$.

Let $q_2:Q_2\to Q_1$ be the blow-up along $\Theta_1^2$ and
$p_2:\bM_2\to \bM_1$ be its quotient by $SL(2)$. Let $\Theta_2^2$ be
the exceptional divisor and $\Theta_2^1, \Theta_2^3$ be the proper
transforms of $\Theta_1^1, \Theta_1^3$. Let $\Gamma_2^i$ be the
quotient of $\Theta_2^i$ by $SL(2)$. Let $\cE_2'$ be the pull-back
of $\cE_1$ to $\PP^r\times Q_2$.
\begin{lemm} There is a unique flat family of quotient sheaves on
the divisor $\Theta_2^2$
\[
\cE_2'|_{\PP^r\times \Theta^2_2}\to A_2
\]
such that the Hilbert polynomial of $A_2$ at every point of
$\Theta_2^2$ is $m$.\end{lemm}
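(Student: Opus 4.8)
The plan is to produce $A_2$ first over a dense open of $\Theta_2^2$, where it is forced by the destabilizing-quotient construction, and then to extend it over the whole exceptional divisor by a properness argument; uniqueness will come for free from separatedness of the relevant Quot scheme.

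First I would pin down the generic picture. Let $U\subset\Theta_1^2$ be the open locus of stable maps $f\colon C\to\PP^r$ whose image is a union $L_1\cup L_2$ of two \emph{distinct} lines; then one line is covered simply and the other doubly, $f_*\cO_C$ is glued from $\cO_{L_1}$ and $\cO_{L_2}\oplus\cO_{L_2}(-1)$ along a point, and the image of the adjunction map $\varphi_{\mathbf f}$ is the structure sheaf of the Cohen-Macaulay curve $L_1\cup L_2$, of Hilbert polynomial $2m+1$. Over $\PP^r\times U$ we have $\cE_1=\cE_0$, so the cokernel $A_U$ of $\cO_{\cC'}\hookrightarrow\cE_1|_{\PP^r\times U}$ (with $\cC'$ the universal curve $L_1\cup L_2$ over $U$) has fibrewise Hilbert polynomial $(3m+1)-(2m+1)=m$ — on each fibre it is $\cO_{L_2}(-1)$ — and is therefore $U$-flat since $U$ is reduced. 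Because $\cE_2'|_{\PP^r\times\Theta_2^2}$ is, along $q_2\upmo(U)$, just the pullback of $\cE_1|_{\PP^r\times U}$ under the projective bundle $\Theta_2^2\cap q_2\upmo(U)\to U$, pulling $A_U$ back gives the wanted quotient over $\Theta_2^2\cap q_2\upmo(U)$, and any $\Theta_2^2$-flat quotient of Hilbert polynomial $m$ must restrict to this one there, since two such quotients agreeing on the dense locus of reduced-support sheaves coincide.

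Next I would extend this quotient over all of $\Theta_2^2$. Form the relative Quot scheme $\rho\colon\cQ\to\Theta_2^2$ parameterizing quotients of $\cE_2'|_{\PP^r\times\Theta_2^2}$ with Hilbert polynomial $m$; it is projective over $\Theta_2^2$. The quotient built above is a section of $\rho$ over the dense open $V:=\Theta_2^2\cap q_2\upmo(U)$; let $\overline W\subset\cQ$ be the closure of its image. Then $\rho|_{\overline W}\colon\overline W\to\Theta_2^2$ is projective, surjective and birational, and $\Theta_2^2$ is smooth — being the exceptional divisor of the blow-up of the smooth $Q_1$ along the smooth center $\Theta_1^2$ — hence normal, so by Zariski's main theorem $\rho|_{\overline W}$ is an isomorphism once its fibres are finite. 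Proving the fibres are finite is the heart of the matter: for each $x\in\Theta_2^2$ lying over $\bar x\in\Theta_1^2$ one must show that there is a single quotient of $\cE_2'|_{\PP^r\times\{x\}}=\cE_1|_{\PP^r\times\{\bar x\}}$ of Hilbert polynomial $m$ occurring as a flat limit of the quotients $\cO_{L_2}(-1)$. I would carry this out by the explicit local computation used repeatedly in \cite{KM}: over the boundary $\Theta_1^2\setminus U$ (two lines colliding, or a double line with an extra component contracted in $\bM$) one writes $\cE_1$ in affine charts, records the normal direction carried by a point of the exceptional divisor, reads off the first term of the Harder-Narasimhan filtration of the limiting sheaf, and checks that the sub/quotient pair is rigidified exactly by that normal datum. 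Granting this, $A_2$ is the universal quotient pulled back along the inverse of $\rho|_{\overline W}$; it is $\Theta_2^2$-flat because it arises from a section of $\rho$, with Hilbert polynomial constantly $m$.

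Uniqueness on all of $\Theta_2^2$ is then immediate: two $\Theta_2^2$-flat quotients with Hilbert polynomial $m$ give two sections of the separated morphism $\rho$ agreeing on the dense open $V$, so they agree on the reduced scheme $\Theta_2^2$. I expect the local degeneration analysis in the extension step to be the sole genuine difficulty; everything else is formal. As a shortcut I would also try to prove directly that $\cE_1|_{\PP^r\times\Theta_1^2}$ already carries a $\Theta_1^2$-flat destabilizing quotient of constant Hilbert polynomial $m$ — removing the jumping of this quotient along the locus of linear images was the whole point of the first elementary modification — in which case $A_2$ is just its pullback under $\Theta_2^2\to\Theta_1^2$ and the Quot-scheme machinery is unnecessary.
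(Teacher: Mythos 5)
Your overall strategy --- construct $A_2$ over the dense open locus of two distinct lines, then extend it by taking the closure of the resulting section inside the relative Quot scheme and invoking Zariski's main theorem --- is genuinely different from the paper's, and it could be pushed through; but as written it has a gap precisely where you flag one, and that gap is the entire content of the lemma. Both the finiteness of the fibres of $\rho|_{\overline W}$ and (in your closing ``shortcut'') the constancy of the Hilbert polynomial of the destabilizing quotient reduce to one unproved assertion: a description of $\cE_2'|_{\PP^r\times\{y\}}$ at the points $y\in\Theta_2^2$ lying over $\Theta_1^2\cap\Theta_1^1$ (equivalently over $\Theta_1^2\cap\Theta_1^3$, since $\Theta_1^2\cap\Theta_1^1\subset\Theta_1^3$). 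You defer this to an unspecified ``explicit local computation.'' The paper supplies it by quoting the proof of Lemma \ref{lem4}: for such $y$, with $q_1(q_2(y))$ a stable map onto a line $L$, one has $q_1^{-1}(f)\cap\Theta_1^2=\PP\Ext^1(\cO_L,\cO_L(-1))$ and $\cE_1$ at $q_2(y)$ is $F\oplus\cO_L(-1)$ with $F$ a nonsplit extension of $\cO_L$ by $\cO_L(-1)$; hence the first Harder--Narasimhan term is $F$, of Hilbert polynomial $2m+1$, at these boundary points just as $\cO_{C'}$ is at the generic ones. Once the HN polynomials are known to be constantly $(2m+1,\,m)$ on all of $\Theta_2^2$, uniqueness of the HN filtration together with the relative Quot scheme immediately yields the unique flat quotient $A_2$ --- no closure, no ZMT, no separate finiteness argument. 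That is the paper's proof, and it is also exactly your ``shortcut,'' which you correctly suspect makes the Quot-scheme machinery unnecessary.

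Two further remarks. First, even granting fibre finiteness, the closure route only produces \emph{some} flat extension of the generic quotient; to use $A_2$ afterwards (as in Lemma \ref{lem5}, where the fibres of $\cE_2$ over the boundary must be computed) you would still have to identify the limit quotient at the points over $\Theta_1^3$, which forces the same computation --- so the detour through $\overline W$ buys nothing. Second, your generic-locus construction (cokernel of $\cO_{\cC'}\hookrightarrow\cE_1$, flat because the polynomial is constantly $m$ over a reduced base) and the uniqueness-by-separatedness argument are both fine and agree with the paper in substance.
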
 \begin{proof} To prove this claim, it
suffices to show that for any $y\in \Theta^2_2$,
$\cE_2'|_{\PP^r\times\{y\}}$ has a destabilizing subsheaf of Hilbert
polynomial $2m+1$. From the uniqueness of the Harder-Narasimhan
filtration and the existence of relative Quot scheme \cite[Chapter
2]{HL}, we deduce that there is such a flat quotient $A_2$.

Let $f\in \Theta^2-\Theta^1$, i.e. $f:C\to \PP^r$ is a stable map
whose image is the union $C'$ of two distinct lines $L_1$ and
$L_2$. Without loss of generality, we may assume $L_2$ is the
degree 2 component. By adjunction, we have a subsheaf $\cO_{C'}$
of $f_*\cO_C$ and a nonsplit extension
\begin{equation}\lab{eq4.5} 0\to \cO_{C'}\to f_*\cO_C\to
\cO_{L_2}(-1)\to 0.\end{equation} Since $\cO_{C'}$ and
$\cO_{L_2}(-1)$ are stable, the destabilizing subsheaf of
$f_*\cO_C$ is $\cO_{C'}$, whose Hilbert polynomial is $2m+1$.

From Lemma \ref{lem4}, we see that $\Theta_1^2\cap
\Theta_1^1\subset \Theta_1^3$. Suppose $y\in \Theta_1^2\cap
\Theta_1^3$. Then $q_1(y)$ is represented by a stable map $f:C\to
L\subset \PP^r$ where $C=C_1\cup C_2$ is reducible and $L$ is a
line. From the proof of Lemma \ref{lem4}, one can check that
$q_1^{-1}(f)\cap \Theta_1^2=\PP \Ext^1(\cO_L,\cO_L(-1))$ and
$\cE_1|_{\PP^r\times \{y\}}\cong F\oplus \cO_L(-1)$ where $F$ is a
nonsplit extension of $\cO_L$ by $\cO_L(-1)$. Hence, $F$ is the
destabilizing subsheaf of $\cE_1|_{\PP^r\times \{y\}}$ whose
Hilbert polynomial is $2m+1$.
\end{proof}

Let $\cE_2$ be the kernel of the composition
\[
\cE_2'\twoheadrightarrow \cE'_2|_{\PP^r\times
\Theta_2^2}\twoheadrightarrow A_2.
\]
\begin{lemm}\lab{lem5}\begin{enumerate}
\item $\cE_2$ is a flat family of coherent sheaves on $\PP^r$
parameterized by $Q_2$.
\item The locus of unstable sheaves is precisely $\Theta_2^3$ and we
have an invariant morphism
\[\Psi_2:Q_2-\Theta_2^3\lra \bS\]
which induces a morphism
\[ \Phi_2:\bM_2-\Gamma_2^3\lra \bS.\]
\end{enumerate}
\end{lemm}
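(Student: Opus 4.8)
The plan is to establish the two assertions of Lemma \ref{lem5} by the same kind of fibrewise analysis that was used for $\cE_1$ in Lemma \ref{lem4}, now applied to the elementary modification along the exceptional divisor $\Theta_2^2$. For part (1), flatness, I would check that the Hilbert polynomial of $\cE_2|_{\PP^r\times\{y\}}$ is constantly $3m+1$ for every $y\in Q_2$. Away from $\Theta_2^2$ nothing changes, so $\cE_2=\cE_2'=\cE_1$ there and flatness is inherited from Lemma \ref{lem4}. Over a point $y\in\Theta_2^2$ lying over some $f\in\Theta^2$, the sequence defining $\cE_2$ reads $0\to\cO_{C'}(\text{or the relevant }2m{+}1\text{-piece})\otim(\epsilon)\to\cE_2\to(\text{quotient of Hilbert poly }2m{+}1)\to 0$ in the $\Spec\CC[\epsilon]/(\epsilon^2)$-picture; since the kernel has Hilbert polynomial $2m+1$ and the image has Hilbert polynomial $m$, the central fibre $\cE_2/\epsilon\cE_2$ has Hilbert polynomial $3m+1$ as required. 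Invoking the local criterion for flatness (constancy of Hilbert polynomial over the reduced base $Q_2$, which is smooth) then gives (1).

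For part (2), I would identify the unstable locus of $\cE_2$ precisely. The previous lemma shows that after the modification producing $\cE_2'$ (i.e. before it) the unstable locus of $\cE_1$ was $\Theta_1^2\cup\Theta_1^3$; blowing up $\Theta_1^2$ and modifying along the exceptional divisor $\Theta_2^2$ is designed exactly to kill the destabilizing subsheaf $\cO_{C'}$ (resp. $F$) of Hilbert polynomial $2m+1$ there, so I expect the generic point of $\Theta_2^2$ to now carry a stable sheaf. The content is to show: (a) over $\Theta_2^2$ minus $\Theta_2^2\cap\Theta_2^3$ the new central fibre $\cE_2/\epsilon\cE_2$, which sits in $0\to A_2'\to\cE_2/\epsilon\cE_2\to(\text{the }2m{+}1\text{-piece})\to 0$ with $A_2'$ of Hilbert polynomial $m$ — wait, rather $0\to(\text{new kernel})\to\cE_2/\epsilon\cE_2\to A_2\to 0$, the extension class being (the image of) a suitable Kodaira–Spencer map — is stable for the generic normal direction, exactly as in the final paragraph of the proof of Lemma \ref{lem4}; and (b) the proper transform $\Theta_2^2\cap\Theta_2^1$ and the rest cause no new instability because there $\cE_2$ agrees with $\cE_1$. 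The one locus that survives is $\Theta_2^3$: at a point of $\Theta_1^2\cap\Theta_1^3$ we saw $\cE_1\cong F\oplus\cO_L(-1)$, so even after removing the $\cO_{C'}$-part the $\cO_L(-1)$-summand remains destabilizing, and this persists on the proper transform $\Theta_2^3$; so the unstable locus is exactly $\Theta_2^3$. The morphisms $\Psi_2$ and $\Phi_2$ are then obtained from the universal property of $\bS$ applied to the flat family $\cE_2|_{\PP^r\times(Q_2-\Theta_2^3)}$ of stable sheaves, together with $SL(2)$-invariance as in Lemma \ref{lem4}.

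The technical heart — the step I expect to be the main obstacle — is the stability computation over the generic point of $\Theta_2^2$, i.e. showing that the nontrivial extension $0\to A_2'\to\cE_2/\epsilon\cE_2\to(\text{the }2m{+}1\text{ destabilizer of }\cE_1)\to 0$ produced by the elementary modification is a \emph{stable} sheaf rather than merely one with no subsheaf of slope exactly the old destabilizing slope. As in the $\cE_1$ case this reduces to a Kodaira–Spencer identification: the extension class of $\cE_2/\epsilon\cE_2$ as an extension of the old destabilizing quotient by $A_2'$ should be the image of the normal direction in $\Theta_2^2$ under an isomorphism $N_{\Theta_1^2/Q_1}\cong\Ext^1(\cdot,\cdot)$ (or a surjection onto it), computed via the deformation theory of $f$ — exactly the analogue of diagram \eqref{eq-b33} and the isomorphism \eqref{eq-b5}. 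Once that isomorphism is in place, a generic extension class avoids the sub-loci where a destabilizing subsheaf could be lifted, and stability follows; the only exceptions are the points over $\Theta_1^3$, which is precisely how $\Theta_2^3$ re-enters. I would therefore organize the proof as: (i) flatness via Hilbert-polynomial constancy; (ii) the local/Kodaira–Spencer computation over $\Theta_2^2$, reusing the diagram-chase machinery of Lemma \ref{lem4}; (iii) conclude that the unstable locus is exactly $\Theta_2^3$ and produce $\Psi_2,\Phi_2$ by the moduli functor of $\bS$.
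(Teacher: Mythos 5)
Your proposal follows the paper's proof essentially step for step: flatness by constancy of the Hilbert polynomial, stability of the new central fibre $0\to\cO_{L_2}(-1)\to E\to\cO_{C'}\to 0$ by showing the extension is nonsplit via a Kodaira--Spencer identification of $N_{\Theta_1^2/Q_1}$ with $\Ext^1(\cO_{C'},\cO_{L_2}(-1))$, and a separate analysis over $\Theta_1^3$ showing that exactly the points of $\Theta_2^3$ stay unstable. The one place the paper works harder than your sketch suggests is that normal-space identification, which is not a verbatim rerun of Lemma \ref{lem4} but a fresh computation using the conormal sequence of the complete intersection $C'$, the octahedron axiom applied to $f^*\Omega_{\PP^r}\to f^*\Omega_{C'}\to\cO_C$, and a careful accounting of the node-smoothing direction $\Ext^1(\Omega_{C'},\cO_{C'})\cong\CC$ --- which is also precisely the extra direction that, together with $y_1$, spans the unstable line $\Theta_2^3\cap q_2^{-1}(y_1)$ over $\Theta_1^3$.
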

\begin{proof} As we saw in the proof of Lemma \ref{lem4}, the effect
of elementary modification is interchanging the destabilizing
subsheaf with the quotient. For $y\in
\Theta^2_2-q_2^{-1}(\Theta_1^3)$, $\cE_2|_{\PP^r\times\{y\}}$ is
an extension
\begin{equation}\lab{eq4.6}
0\to \cO_{L_2}(-1)\to \cE_2|_{\PP^r\times\{y\}}\to \cO_{C'}\to 0
\end{equation}
and thus the Hilbert polynomial remains unchanged. We claim
$E=\cE_2|_{\PP^r\times\{y\}}$ is stable. Indeed, \eqref{eq4.5} gives
an exact sequence
\[
0\to N_{C'}\to N_{C'}\otimes f_*\cO_C\cong f_*f^*N_{C'}\to
\cO_{L_2}(-1)\otimes N_{C'}\to 0
\]
by tensoring the normal bundle $N_{C'}$ of the complete intersection
$C'$. Upon taking cohomology, we get a diagram
\begin{equation}\label{304-1} \xymatrix{ 0\ar[r]& H^0(C',N_{C'})\ar[r] &
H^0(C',f_*f^*N_{C'})\ar[r] & H^0(C',
\cO_{L_2}(-1)\otimes N_{C'})\ar[r] & 0
} \end{equation} 

The deformation of $C'$ while keeping a node is parameterized by the
kernel of
$$H^0(C',N_{C'})\lra \Ext^1(\Omega_{C'},\cO_{C'})\cong \CC$$
which comes from the exact sequence
\[
0\to N_{C'}^*\to \Omega_{\PP^r}|_{C'}\to \Omega_{C'} \to 0.
\]
Here, $\Ext^1(\Omega_{C'},\cO_{C'})\cong\CC$ is the smoothing
direction of the node of $C'$.

Next, we calculate the quotient
\[ T_f\cM_0(\PP^r,3)/T_f\cM_0(C',3).
\]
It is well known that the tangent space of $\cM_0(\PP^r,3)$ (resp.
$\cM_0(C',3)$) is given by the hypercohomology
\[
\Ext^1(\{f^*\Omega_{\PP^r}\to \cO_C\},\cO_C)\quad (\text{resp. }
\Ext^1(\{f^*\Omega_{C'}\to \cO_C\},\cO_C) ).
\]
By applying the octahedron axiom for the derived category of sheaf
complexes on $C$ to the composition $f^*\Omega_{\PP^r}\to
f^*\Omega_{C'}\to  \cO_C$, we obtain a distinguished triangle
\[
f^*N_{C'}^*[1]\lra \{f^*\Omega_{\PP^r}\to \cO_C\}\lra
\{f^*\Omega_{C'}\to \cO_C\} \mapright{[1]}.
\]
This induces an exact sequence
\[
0\lra T_f\cM_0(C',3)\lra T_f\cM_0(\PP^r,3)\lra H^0(C,f^*N_{C'})\lra
\Ext^2(\{f^*\Omega_{C'}\to \cO_C\},\cO_C)\lra 0.
\]
Since the dimension of $\cM_0(C',3)$ (resp. $\cM_0(\PP^r,3)$) at $f$
is 2 (resp. $4r$) and the dimension of $H^0(C,f^*N_{C'})$ is
$7+4(r-2)$ by $N_{C'}=\cO(2)\oplus \cO(1)^{r-2}$, we have
$ \Ext^2(\{f^*\Omega_{C'}\to \cO_C\},\cO_C)\cong \CC.$ 
Thus we have a commutative diagram
\[
\xymatrix{ 0\ar[r] &H^0(C',T_{C'})\ar[r]\ar[d] &
H^0(C',T_{\PP^r})\ar[d]\ar[r] & H^0(C',N_{C'})\ar[d]\ar[r] &\CC
\ar[r] &0\\
0\ar[r] &H^0(C',f_*f^*T_{C'})\ar[r]\ar[d]^\cong &
H^0(C',f_*f^*T_{\PP^r})\ar[d]^\cong\ar[r] &
H^0(C',f_*f^*N_{C'})\ar[d]^{\cong}\\
0\ar[r] &H^0(C,f^*T_{C'})\ar[r]\ar[d] &
H^0(C,f^*T_{\PP^r})\ar[d]\ar[r] &
H^0(C,f^*N_{C'})\ar@{=}[d]\\
0\ar[r] & T_f\cM_0(C',3)\ar[r] & T_f\cM_0(\PP^r,3) \ar[r]
&H^0(C,f^*N_{C'})\ar[r] & \CC\ar[r]& 0, }\] which induces a
commutative diagram
\[
\xymatrix{ H^0(C',N_{C'})\ar[r]\ar[d] &\CC\ar@{=}[d]\\
H^0(C,f^*N_{C'})\ar[r] &\CC. }\] The kernel of the top row is the
deformation of $C'$ and the kernel of the bottom row is the quotient
$ T_f\cM_0(\PP^r,3)/T_f\cM_0(C',3).$ Therefore, by \eqref{304-1} the
normal space to $\Theta^2_1$ at $q_2(y)$ in $Q_1$ is isomorphic to
\[
H^0(C', \cO_{L_2}(-1)\otimes N_{C'})\cong \Hom (\cI_{C'},
\cO_{L_2}(-1))\cong \Ext^1(\cO_{C'},\cO_{L_2}(-1))
\]
where $\cI_{C'}$ is the ideal sheaf of $C'$. As in the proof of
Lemma \ref{lem4}, one can check that this isomorphism is compatible
with the Kodaira-Spencer map for $\cE_1$ and thus \eqref{eq4.6} is
nonsplit. Stability follows immediately.

Next suppose $y\in q_2^{-1}(\Theta_1^3)\cap \Theta_2^2$. Let
$y_1=q_2(y)\in \Theta_1^3$ and $y_0=q_1(y_1)$ be a stable map
$f:C\to L\subset\PP^r$ for some line $L$. Then $q_1^{-1}(y_0)\cap
\Theta_1^2\cong \PP \Ext^1(\cO_L,\cO_L(-1))=\PP^{r-2}$. From the
proof of Lemma \ref{lem4},  we see that the normal space to
$\Theta^2_1$ at $y_1$ is $\Ext^1(\cO_L,\cO_L(-1))\oplus \CC$ where
the summand $\CC$ parameterizes the smoothing of $C$, i.e. it comes
from the normal direction of $\Theta^1\cap\Theta^2$ in $\Theta^1$.
Since $y_1\in q_1^{-1}(y_0)\cap \Theta_1^2=\PP
\Ext^1(\cO_L,\cO_L(-1))$, $y_1$ can be thought of as a line in
$\Ext^1(\cO_L,\cO_L(-1))$. Certainly, the isomorphism classes of
sheaves in the family $\cE_1$ don't change in the smoothing
direction and along the line of $y_1$. Hence the Kodaira-Spencer map
on the normal space to $\Theta^2_1$ factors through
$\Ext^1(\cO_L,\cO_L(-1))/\CC\cdot y_1.$ Now by an easy calculation,
we find that $E=\cE_2|_{\PP^r\times\{y\}}$ is the extension sheaf
\[ 0\to \cO_L(-1)\oplus \cO_L(-1)\to E\to \cO_L\to 0\]
determined by $y_1$ and the image of the line $y$ in
$\Ext^1(\cO_L,\cO_L(-1))$. Hence $E$ is stable if and only if
$y\in \PP \left(\Ext^1(\cO_L,\cO_L(-1))\oplus \CC \right)$ does not
belong to the projective line spanned by $y_1$ and the smoothing
direction $\CC$, in which case $E=F\oplus \cO_L(-1)$ for a stable
sheaf $F$ with Hilbert polynomial $2m+1$. The projective line
spanned by $y_1$ and $\CC$ is precisely $\Theta^3_2\cap
q_2^{-1}(y_1)$.
\end{proof}

\begin{exam}\emph{ In fact, we can calculate all the stable
sheaves in $\cE_2$ by local calculations. 
Let $C$ be the
union of two curves $\{(t,s)\,|\,  ts=0\}$ and consider the family
of stable maps locally given by
\[
x_1=t^2,\quad x_2=s,\quad x_3=at,\quad a=ts.
\]
When $a\ne 0$, it is a family of smooth cubics and when $a=0$, we
have a stable map in $\Theta^2$. Then on the line of $a\in \CC$,
$$\cE_0=\cE_1=\CC[t,s]=\CC[t,s,a]/(ts-a)$$ and $\cE_2$ is the kernel of
\[
\CC[t,s]\lra \CC[t,s]/(ts)\lra t\CC[t^2]
\]
The central fiber of $\cE_2$ is then
\[
\cE_2/a\cE_2=\CC[t^2,s]/(t^2s^2)\cong
\CC[x_1,\cdots,x_r]/(x_1x_2^2,x_3^2,x_2x_3^2,x_3-x_1x_2,x_4,\cdots,x_r).
\]
This is a Cohen-Macaulay curve and hence stable. Of course, this
is a curve with two components, one of which is a reduced line and
the other is the double line thickened in a quadric surface in a
$\PP^3$. }\end{exam}

\begin{rema}\emph{
We will see in the subsequent section that $\Gamma^2$ is a
$\PP^2_{(1,2,2)}$-bundle over a $\PP^{r-1}\times \PP^{r-1}$-bundle
over $\PP^r$ by GIT, where $\PP^2_{(1,2,2)}$ is the weighted
projective space with weights $(1,2,2)$. Of course, the
$\PP^{r-1}\times \PP^{r-1}$-bundle over $\PP^r$ above parameterizes
pairs of intersecting lines in $\PP^r$ and $\PP^2_{(1,2,2)}$
parameterizes double coverings of a line. The proof of Lemma
\ref{lem5} tells us that the family $\cE_2$ of stable sheaves on
$\Gamma^2_2-\Gamma^1_2\cup\Gamma^3_2$ remains constant on the fibers
$\PP^2_{(1,2,2)}$ but depends only on the fiber
$\PP\Ext^1(\cO_{C'},\cO_{L_2}(-1))$ of $\Gamma^2_2\to \Gamma^2_1$.
}\end{rema}

Let $q_3:Q_3\to Q_2$ be the blow-up along $\Theta_2^3$ and let
$p_3:\bM_3\to \bM_2$ be its quotient by $SL(2)$. Let $\Theta_3^3$ be
the exceptional divisor and $\Theta^1_3, \Theta_3^2$ be the proper
transforms of $\Theta^1_2,\Theta^2_2$ respectively. Let $\Gamma_3^i$
be the quotient of $\Theta_3^i$ by $SL(2)$. Let $\cE_3'$ be the
pull-back of $\cE_2$ to $\PP^r\times Q_3$. As we saw above, if $y\in
\Theta_3^3$, $\cE_3'|_{\PP^r\times \{y\}}\cong F\oplus \cO_L(-1)$
for some line $L$ and a stable sheaf $F$ whose Hilbert polynomial is
$2m+1$. Obviously, $F$ is the destabilizing sheaf and by the
existence of relative Quot scheme \cite[Chapter 2]{HL} again, we
obtain a quotient homomorphism
\[
\cE_3'|_{\PP^r\times \Theta_3^3}\twoheadrightarrow A_3\] such that
for $y\in \Theta_3^3$, $A_3|_{\PP^r\times\{y\}}\cong \cO_L(-1)$
for some line $L$ depending on $y$. Let $\cE_3$ be the kernel of
the epimorphism
\[
\cE_3'\twoheadrightarrow \cE_3'|_{\PP^r\times
\Theta_3^3}\twoheadrightarrow A_3 .
\]

\begin{lemm}\lab{lem6} $\cE_3$ is a family of stable sheaves on $\PP^r$ and
hence we obtain an invariant morphism $\Psi_3:Q_3\to \bS$ which
induces a morphism
$$\Phi_3:\bM_3\to \bS.$$ \end{lemm}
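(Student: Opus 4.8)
The plan is to proceed exactly as in the proofs of Lemmas~\ref{lem4} and~\ref{lem5}, the happy difference being that this time the elementary modification yields stable sheaves at \emph{every} point of the new exceptional divisor, so that no fourth blow-up is necessary. I would split $Q_3$ into $Q_3\setminus\Theta_3^3$ and $\Theta_3^3$. Over $Q_3\setminus\Theta_3^3$ the morphism $q_3$ is an isomorphism onto $Q_2\setminus\Theta_2^3$ and there $\cE_3$ equals the pull-back of $\cE_2$, so stability on this locus is exactly Lemma~\ref{lem5}(2). Flatness of $\cE_3$ over $Q_3$ is checked as before: from $0\to\cE_3\to\cE_3'\to A_3\to0$ and the fact that $\Theta_3^3$ is a Cartier divisor one obtains a four-term exact sequence of fibers showing $\chi(\cE_3|_{\PP^r\times\{y\}}(m))\equiv 3m+1$. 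Thus it remains only to prove stability of $\cE_3|_{\PP^r\times\{y\}}$ for $y\in\Theta_3^3$.

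For such $y$ the discussion preceding the lemma gives $\cE_3'|_{\PP^r\times\{y\}}\cong F\oplus\cO_L(-1)$ with $L$ a line, $F$ stable of Hilbert polynomial $2m+1$, and $A_3|_{\PP^r\times\{y\}}\cong\cO_L(-1)$. As in Lemmas~\ref{lem4} and~\ref{lem5}, reading the elementary modification over $\Spec\CC[\epsilon]/(\epsilon^2)$ shows that it interchanges the chosen destabilizing quotient with its complementary kernel, so that
\[
0\lra\cO_L(-1)\lra\cE_3|_{\PP^r\times\{y\}}\lra F\lra 0,
\]
the extension class being the image of the normal direction represented by $y$ under the Kodaira--Spencer map of $\cE_2$, i.e. under a homomorphism $N_{\Theta_2^3/Q_2}\to\Ext^1_{\PP^r}(F,\cO_L(-1))$. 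The first substantive step is to pin down this normal bundle — continuing the bookkeeping begun in the proof of Lemma~\ref{lem5}, where $\Theta^3$ is followed through the blow-up $q_2$ along $\Theta_1^2$ — and to show that Kodaira--Spencer embeds it into $\Ext^1_{\PP^r}(F,\cO_L(-1))$; this is precisely the assertion that no normal direction is killed, equivalently that no fourth blow-up is needed.

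Granting that the displayed sequence is nonsplit for every $y\in\Theta_3^3$, stability of $E:=\cE_3|_{\PP^r\times\{y\}}$ is elementary. First $E$ is pure of dimension one, since a zero-dimensional subsheaf would inject into the pure sheaf $F$. Next, for a proper subsheaf $G\subsetneq E$, write $G''=G\cap\cO_L(-1)$ and let $G'$ be the image of $G$ in $F$. Using that $F$ is stable with $r(F)=2$, that $r(\cO_L(-1))=1$ with $\chi(\cO_L(-1))=0$, and that any subsheaf of $\cO_L(-1)$ has nonpositive Euler characteristic, a short case analysis on $(G',G'')$ shows $\chi(G(m))/r(G)\le m$ in every case except $G''=0$, $G'=F$, in which case $G\cong F$ and $G\hookrightarrow E$ would split the sequence. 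Since the sequence is nonsplit and $m<m+\tfrac13=\chi(E(m))/r(E)$, no subsheaf destabilizes $E$, so $E$ is stable.

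Combining the two parts, $\cE_3$ is a flat family of stable sheaves on $\PP^r$ parameterized by $Q_3$. The universal property of Simpson's moduli space then gives a morphism $Q_3\to\cS imp^{3m+1}(\PP^r)$ whose image is irreducible and contains the locus of smooth rational cubics, hence lies in $\bS$; call it $\Psi_3$. Because $\cE_3$ is constructed $SL(2)$-equivariantly, $\Psi_3$ is invariant and descends through $Q_3\to\bM_3=Q_3/SL(2)$ to the desired $\Phi_3:\bM_3\to\bS$. The main obstacle is the Kodaira--Spencer computation of the second paragraph: identifying $N_{\Theta_2^3/Q_2}$ after the first two blow-ups and showing it injects into $\Ext^1_{\PP^r}(F,\cO_L(-1))$. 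Everything else is either immediate from Lemma~\ref{lem5} or a routine check.
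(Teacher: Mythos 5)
Your outline reproduces the paper's strategy faithfully: flatness and stability off $\Theta_3^3$ come from Lemma \ref{lem5}, the modification over $\Theta_3^3$ interchanges $F$ with the quotient $\cO_L(-1)$ to produce an extension $0\to\cO_L(-1)\to\cE_3|_{\PP^r\times\{y\}}\to F\to 0$ classified by the image of the normal direction under the Kodaira--Spencer map, and everything reduces to showing that $N_{\Theta_2^3/Q_2,y_2}\to\Ext^1_{\PP^r}(F,\cO_L(-1))$ is injective. Your careful case analysis showing that a \emph{nonsplit} extension of $F$ by $\cO_L(-1)$ is stable is actually more detailed than the paper, which dismisses this as obvious; that part is correct and welcome.

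However, the one step you explicitly defer --- ``granting that the displayed sequence is nonsplit'' --- is precisely the substantive content of the lemma, and without it the proof is incomplete. The paper's argument here has two genuinely different pieces. First it identifies $N_{\Theta_2^3/Q_2,y_2}\cong \Ext^1(\cO_L,\cO_L(-1))/\CC\cdot y_1\oplus\CC$, where the first summand is normal to $\Theta_2^3$ inside $\Theta_2^1$ and the $\CC$ is normal to the divisor $\Theta_2^1$ itself; on the first summand a direct computation with extension classes of the form $(y_1,\epsilon\tilde y)$ shows Kodaira--Spencer maps it isomorphically onto the subspace $\Ext^1(\cO_L,\cO_L(-1))/\CC\cdot y_1$ of $\Ext^1(F,\cO_L(-1))$ coming from \eqref{eq-c1}. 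The second piece is the harder one and is not a formal consequence of Lemmas \ref{lem4} and \ref{lem5}: to see that the remaining $\CC$-direction is not killed, the paper degenerates along the locus $\Lambda_2$ of planar stable maps (which meets $\Theta_2^1$ transversely along $\Theta_2^3$), uses that the nearby sheaves in $\cE_2$ are stable planar sheaves $0\to\cO_{C'_t}\to\cE_2|_t\to\cO_{p_t}\to 0$ supported in a fixed plane, and argues that if the resulting class $c\in\Ext^1(F,\cO_L(-1))$ had zero image in $\Ext^1(\cO_L(-1),\cO_L(-1))$ it would be forced to come from $y_1$ and hence vanish, contradicting stability for $t\ne 0$. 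You would need to supply an argument of this kind (or some substitute) for the normal-to-$\Theta_2^1$ direction; as written, the crux is named but not proved.
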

\begin{proof} Let $y\in \Theta_3^3$, $y_2=q_3(y)$,
$y_1=q_2(y_2)$. From the proofs of Lemmas \ref{lem4} and \ref{lem5},
we find that the normal space to $\Theta_2^3$ in $Q_2$ at $y_2$ is
$$N_{\Theta_2^3/Q_2,y_2}\cong
\Ext^1(\cO_L,\cO_L(-1))/\CC\cdot y_1\oplus \CC \cong\CC^{r-1}$$
where the summand $\Ext^1(\cO_L,\cO_L(-1))/\CC\cdot y_1$ is the
normal space of $\Theta^3_2$ in $\Theta^1_2$ while the direct
summand $\CC$ is the normal direction to the divisor $\Theta^1_2$.
The family $\cE_2$ restricted to $\Theta_2^3$ is always of the
splitting form $F\oplus \cO_L(-1)$ where $F$ is the extension of
$\cO_L$ by $\cO_L(-1)$, i.e. the double line in a plane containing
$L$. Hence the Kodaira-Spencer map
\[
T_{y_2}Q_2\lra \Ext^1_{\PP^r}(F\oplus \cO_L(-1),F\oplus \cO_L(-1))
\]
induces a map
\[
N_{\Theta_2^3/Q_2,y_2}\lra \Ext^1_{\PP^r}(F,\cO_L(-1)).
\]
We claim this is injective and hence after elementary modification
$\cE_3|_{\PP^r\times\{y\}}$ becomes a nontrivial extension of $F$ by
$\cO_L(-1)$ which is obviously stable.

From the exactness of $0\to \cO_L(-1)\to F\to\cO_L\to 0$, we obtain
an exact sequence
\begin{equation}\label{eq-c1}
0\to\Ext^1(\cO_L,\cO_L(-1))/\CC\cdot y_1 \to \Ext^1(F,\cO_L(-1))\to
\Ext^1(\cO_L(-1),\cO_L(-1))\to 0 .
\end{equation}
Suppose $y\in \PP \Ext^1(\cO_L,\cO_L(-1))/\CC\cdot y_1$ and fix
$\tilde y\in \Ext^1(\cO_L,\cO_L(-1))$ representing $y$. Then from
the proofs of Lemmas \ref{lem4} and \ref{lem5}, we find that the
direction of $y$ in $N_{\Theta_2^3/Q_2,y_2}$ is given by the family
of extensions
\[
0\lra \cO_L(-1)\oplus \cO_L(-1)\lra E\lra \cO_L\lra 0
\]
whose extension class is $(y_1,\epsilon\tilde y)$. By direct
calculation, the elementary modification at $\epsilon=0$ gives us
the nontrivial extension with extension class $(y_1,\tilde y)$.
This is just the thickening of $L$ in a $\PP^3$ determined by
$y_1$ and $y$. Therefore, we find that the Kodaira-Spencer map
sends the summand $\Ext^1(\cO_L,\cO_L(-1))/\CC\cdot y_1$ of
$N_{\Theta_2^3/Q_2,y_2}$ isomorphically onto the same subspace
$\Ext^1(\cO_L,\cO_L(-1))/\CC\cdot y_1$ of $\Ext^1(F,\cO_L(-1))$.

For the other summand $\CC$ of $N_{\Theta_2^3/Q_2,y_2}$, consider
the locus $\Lambda$ of stable maps in $Q_0$ whose images are
planar. Then by direct local calculation, it is easy to see that
this locus is of codimension $r-2$ and its proper transform
$\Lambda_2$ in $Q_2$ intersects with $\Theta^1_2$ transversely
along $\Theta^3_2$. We choose an analytic curve $g:D\to\Lambda_2$
from a small disk $D$ in $\CC$ to $\Lambda_2$, which passes
through $y_2$ and moves away from $\Theta^1_2$. Let $\cE_g$ be the
pullback of $\cE_2$ to $\PP^r\times D$ via $(\id_{\PP^r},g)$. For
$t\ne 0$, the image of the stable map parameterized by
$q_1(q_2(g(t)))$ is a singular cubic plane curve $C'_t$ and
$\cE_2$ at $t$ is an extension of a skyscraper sheaf $\cO_{p_t}$
by the structure sheaf $\cO_{C_t'}$ of $C_t'$ for some $p_t\in
C_t'$. Without loss of generality, by applying linear
transformations, we may assume that $C'_t$ are all contained in a
fixed $\PP^2$ in $\PP^r$ for $t\ne 0$. The restriction of $\cE_g$
to $\Spec\CC[t]/(t^2)$ is an extension
\[
0\lra t\cdot (F\oplus \cO_L(-1))\lra \cE_g\lra F\oplus \cO_L(-1)\lra
0
\]
by flatness, where $F$ is the double of $L$ in $\PP^2$. By the
stability of $\cE_g$ for $t\ne 0$, the extension class in
$\Ext^1(F\oplus \cO_L(-1),F\oplus \cO_L(-1))$ has nontrivial
component $c\in\Ext^1(F,\cO_L(-1))$. Furthermore, since $\cE_g$ for
$t\ne 0$ is a family of stable sheaves supported in the plane
$\PP^2$ determined by $y_1$, if the image of $c$ in
$\Ext^1(\cO_L(-1),\cO_L(-1))$ by \eqref{eq-c1} were zero, then $c$
would come from $y_1\in \Ext^1(\cO_L,\cO_L(-1))$ and hence $c$ would
be trivial. Hence every sheaf in the family $\cE_3$ is stable as
desired.
\end{proof}

There are three possibilities for $y\in \Theta^3_3$ and in each case
one can calculate the sheaf $\cE_3$ at the point as in the example
below:\begin{enumerate}
\item If $y\in \PP \Ext^1(\cO_L,\cO_L(-1))/\CC$, then
$\cE_3|_{\PP^r\times\{y\}}$ is the thickening of $L$ in a $\PP^3$.
\item If $y$ is the normal direction to $\Theta^1_2$, then $\cE_3$
at $y$ is a nontrivial extension of a skyscraper sheaf $\CC_p$ for
some point $p\in L$ by the triple thickening of $L$ in a $\PP^2$
which contains $L$. \item If $y$ is not in either of the direct
summands, then $\cE_3$ at $y$ is a triple thickening of $L$ in a
quadric cone in a $\PP^3$ containing $L$.
\end{enumerate}
As observed in Remark \ref{rem4.8} and the above proof, the family
of stable sheaves parameterized by $\Theta^3_3$ is independent of
the factor $(\PP^7)^s$, i.e. we may choose any stable map
$y_0=q_1(y_1)$ to $L\cong\PP^1$ in calculating the stable sheaves in
$\cE_3$.

\begin{exam}\emph{ Case (2):
Consider the family of maps $f_a:\PP^1\to \PP^2$, locally given by
\[
\CC[x,y,a]\lra \CC[t,a]=E
\]
where $x\mapsto a(\rho t+t^2)$, $y\mapsto t^3$, $a\mapsto a$ for
$\rho\in\CC$. Before applying elementary modifications, we have
\[
E|_{a=0}=\CC[t]=\CC[y]\oplus t\CC[y]\oplus t^2\CC[y].
\]
The modification on $Q_1$ is taking the kernel $E_1$ of
\[
\CC[t,a]\lra \CC[t,a]/(a)\cong \CC[t]\lra t\CC[y]\oplus t^2\CC[y].
\]
Hence $E_1$ consists of elements of the form $f(t^3)+a\cdot
g(t,a)$ for polynomials $f,g$. To obtain $E_1|_{a=0}$, we have to
take the quotient by the submodule generated by $a$. It is easy to
see that the quotient by $(a)$ is
\[ \CC[y]\oplus at\CC[y]\oplus at^2\CC[y]\cong \CC[y]\oplus
at\CC[y]\oplus x\CC[y].
\]
The modification on $Q_3$ is taking the kernel $E_3$ of
\[
E_1\to E_1|_{a=0}\cong \CC[y]\oplus at\CC[y]\oplus x\CC[y]\lra
at\CC[y].
\]
If we quotient out $E_3$ by the submodule generated by $a$, we
obtain $$\CC[y]\oplus x\CC[y]\oplus x^2\CC[y]+\CC\cdot
a^2t=\CC[x,y]/(x^3)+\CC_{(0,\rho^3)}$$ which is a nontrivial
extension of $\CC_p$ where $p=(0,\rho^3)$ by $\CC[x,y]/(x^3)$, the
triple thickening of the line $x=0$ in the $xy$-plane, because
$x^2=a^2t(y-\rho^3)$ in $E_3/aE_3$.}\end{exam}

\begin{exam}
\emph{Case (3): Consider the family of maps $f_a:\PP^1\to \PP^3$,
locally given by
\[
\CC[x,y,z,a]\lra \CC[t,a]=E
\]
where $x\mapsto at^2$, $y\mapsto a^2t$, $z\mapsto t^3$, $a\mapsto
a$. As above, $E|_{a=0}=\CC[t]=\CC[z]\oplus t\CC[z]\oplus
t^2\CC[z].$ The calculations as above show that
$$\cE_3/a\cE_3=\CC[z]\oplus x\CC[z]\oplus y\CC[z]$$
and that $xy=0$, $y^2=0$, $yz-x^2=0$. This is the thickening of $L$
in the quadric cone $yz=x^2$.}\end{exam}

\bigskip

\subsection{Blow-downs}

In this subsection, we show that $\Phi_3:\bM_3\to \bS$ factors
through three blow-downs
\begin{equation}\lab{eq4.9}
\xymatrix{\bM_3\ar[r]^{\pi_4} &\bM_4\ar[r]^{\pi_5}
&\bM_5\ar[r]^{\pi_6} &\bM_6}
\end{equation}
where $\pi_4$ is a weighted blow-down along $\Gamma^2_3$, $\pi_5$
is a weighted blow-down along the proper transform of $\Gamma_3^3$
and $\pi_6$ is the smooth blow-down along the proper transform of
$\Gamma_3^1$. We will study the local geometry of the divisors and
show that the divisor to be contracted at each stage is a
(weighted) projective bundle and there exists a blow-down map
contracting the projective fibers. Then it is easy to see that the
morphism $\Phi_3$ remains constant on each contracted fiber and
hence it factors through $\Phi_6:\bM_6\to \bS$. Finally one can
directly check that the induced morphism
$$\Phi_6:\bM_6\lra \bS$$ is bijective
and hence we obtain an isomorphism $\bM_6\cong \bS$.

\medskip

Let us start with $\Gamma^1$, which is isomorphic to $\cM_0(\PP
\cU)$ over $Gr(2,r+1)$. Recall that
\[
\Gamma^1=\Theta^1/SL(2)
\]
where $\Theta^1$ is given by \eqref{eq916-1}. Since all the
transformations will take place over $Gr(2,r+1)$, we fix a line
$L\cong \PP^1\subset \PP^r$ to simplify the notation. In
particular, $\Theta^1$ is a $\PP (\Sym^3(\CC^2)\otimes
\CC^2)^s$-bundle on $Gr(2,r+1)$. By Corollary \ref{cor-b3}, the
normal bundle of $\Theta^1$, restricted to a fiber of $\Theta^1\to
Gr(2,r+1)$ over $L$, is
$$\cO_{\PP^7}(-1)^{2r-2}=\Ext^1(\cO_L,\cO_L(-1))^{\oplus 2}\otimes
\cO(-1)$$ and hence an analytic neighborhood $U^1$ of $\Gamma^1$ in
$\bM$ is equivalent to a bundle over $Gr(2,r+1)$ with fiber
$$\tilde U^1:=\cO_{\PP^7}(-1)^{2r-2}/\!/SL(2).$$
Therefore, a fiber of $\Gamma_1^1\to Gr(2,r+1)$ over $L$ is
\[
\tilde \Gamma_1^1:=\left(\PP (\Sym^3(\CC^2)\otimes \CC^2)\times
\PP(\CC^{r-1}\otimes \CC^2)\right)/\!/_{\cO(1,\lambda)}SL(2)
\]
which is the GIT quotient of $\PP^7\times \PP^{2r-3}$ with
linearization $\cO(1,\lambda)=\cO_{\PP^7}(1)\otimes
\cO_{\PP^{2r-3}}(\lambda))$ for $0<\lambda<1$. (Note that for
$\lambda<1$, the stable set is $(\PP^7)^s\times \PP^{2r-3}$.) Here
$SL(2)$ acts trivially on $\CC^{r-1}$ and as standard matrix
multiplication on $\CC^2$ for $\PP(\CC^{r-1}\otimes \CC^2)$.
Furthermore, an analytic neighborhood $U^1_1$ of $\Gamma^1_1$ is the
quotient of the line bundle
\[
\tilde
U^1_1:=\cO_{\PP^7\times\PP^{2r-3}}(-1,-1)/\!/_{\cO(1,\lambda)}SL(2)
\]
since the blow-up of $\cO_{\PP^7}(-1)^{2r-2}$ along the zero section
is $\cO_{\PP^7\times\PP^{2r-3}}(-1,-1)$. Here the linearization of
$\cO_{\PP^7\times\PP^{2r-3}}(-1,-1)$ comes from the compactification
$$\PP (\cO_{\PP^7\times\PP^{2r-3}}(-1,-1)\oplus \cO)$$ with
linearization
\[ \pi^*\cO_{\PP^7\times\PP^{2r-3}}(1,\lambda)\otimes \cO_{\PP (\cO(-1,-1)\oplus \cO
)}(-\epsilon Z)
\]
where $Z\cong \PP^7\times\PP^{2r-3}$ is the zero section
$\PP(0\oplus \cO)$ of
$$\pi:\PP(\cO_{\PP^7\times\PP^{2r-3}}(-1,-1)\oplus \cO)\to
\PP^7\times\PP^{2r-3}$$ and $\epsilon$ is a sufficiently small
positive number.

When $\lambda>>0$, the quotient
\[
\tilde
U^1_5:=\cO_{\PP^7\times\PP^{2r-3}}(-1,-1)/\!/_{\cO(1,\lambda)}SL(2)
\]
is the blow-up of $$\tilde
U^1_6:=\cO_{\PP^{2r-3}}(-1)^{8}/\!/SL(2)$$ along the zero section
$$\tilde \Gamma^1_6:= \PP^{2r-3}/\!/SL(2)\cong Gr(2,r-1)$$ and we have
a blow-down map $\tilde U^1_5\lra \tilde U^1_6$ to a neighborhood of
$\tilde\Gamma^1_6$. This Grassmannian $\tilde\Gamma^1_6$
parameterizes the choice of a $\PP^3$ containing the line $L$ and
each point will give us a thickening of $L$ in the chosen $\PP^3$.

We will see that the blow-up process in the previous section can be
described by variation of GIT quotients \cite{Thad, DH}, as we vary
$\lambda$ from $1^-$ to $\infty$. It is easy to see by the
Hilbert-Mumford criterion that the quotient varies only at
$\lambda=1$ and $\lambda=3$. Then one finds the $\CC^*$-fixed locus
and the weight space decomposition of the normal bundle to each
fixed point component. It is just an elementary exercise to show
that the variations at $\lambda=1$ and $\lambda=3$ are flips, i.e. a
blow-up followed by a blow-down, as follows.
\begin{enumerate}
\item The flip at $\lambda=1$:
\[
\tilde\Gamma^1_1=\PP^7\times \PP^{2r-3}/\!/_{\cO(1,1^-)}SL(2)
\longleftarrow \tilde\Gamma_2^1 \longrightarrow \PP^7\times
\PP^{2r-3}/\!/_{\cO(1,2)}SL(2)
\]
is the composition of a blow-up and a blow-down. The first map is
the smooth blow-up along a $\PP^2_{(1,2,2)}$-bundle over
$\PP^1\times \PP^{r-2}$, where $\PP^2_{(1,2,2)}$ is the weighted
projective space with weights $(1,2,2)$. This blow-up center
coincides with the fiber of the intersection $\Gamma^1_1\cap
\Gamma^2_1$ over $Gr(2,r+1)$. The second map is the weighted
blow-up along a $\PP^{r-1}$-bundle over $\PP^1\times \PP^{r-2}$
with weights $(1,2,2)$ on each normal space.
\item The flip at $\lambda=3$:
\[
\PP^7\times \PP^{2r-3}/\!/_{\cO(1,2)}SL(2) \longleftarrow
\tilde\Gamma_4^1 \longrightarrow \PP^7\times
\PP^{2r-3}/\!/_{\cO(1,4)}SL(2)=:\tilde\Gamma_5^1
\]
is the composition of a blow-up and a blow-down. The first map is
the smooth blow-up along a $\PP^4_{(1,2,2,3,3)}$-bundle over
$\PP^1\times \PP^{r-2}$ where $\PP^4_{(1,2,2,3,3)}$ is the
weighted projective space with weights $(1,2,2,3,3)$. The second
map is the weighted blow-up along a $\PP^{r-3}$-bundle over
$\PP^1\times \PP^{r-2}$ with weights $(1,2,2,3,3)$ on each normal
space.
\end{enumerate}
When $\lambda>3$, $\tilde\Gamma^1_5=\PP^7\times
\PP^{2r-3}/\!/_{\cO(1,\lambda)}SL(2)$ is a $\PP^7$-bundle over
$$\PP(\CC^{r-1}\otimes\CC^2)/\!/SL(2)\cong Gr(2,r-1)=:\tilde
\Gamma^1_6.$$

If we let $\tilde \Gamma_3^1$ be the fiber product of $\tilde
\Gamma_2^1$ with $\tilde \Gamma_4^1$, we obtain the following
diagram.
\begin{equation}\lab{diag4.1}
\xymatrix{ &&\tilde\Gamma^1_3\ar[dl] \ar[dr]
\\
&\tilde\Gamma^1_2\ar[dl] \ar[dr] && \tilde\Gamma^1_4\ar[dl]
\ar[dr]
\\
\tilde\Gamma_1^1\ar[d]_{\PP^{2r-3}} &&\PP^7\times
\PP^{2r-3}/\!/_{\cO(1,2)}SL(2) && \tilde\Gamma_5^1\ar[d]^{\PP^7}\\
\cM_0(\PP^1,3)&&&& \tilde\Gamma_6^1 }
\end{equation}
The left and right vertical maps are projective bundles and all
other maps are blow-ups. The two maps from $\tilde\Gamma_3^1$ are
blow-ups because the blow-up centers in $\PP^7\times
\PP^{2r-3}/\!/_{\cO(1,2)}SL(2) $ are transversal. By our
construction, it is easy to see that $\Gamma^1_i$ is exactly a
$\tilde\Gamma_i^1$-bundle over $Gr(2,r+1)$ for $i\le 3$.

Similarly, we can study the variation of the GIT quotient
$$\tilde U^1_1=\cO_{\PP^7\times\PP^{2r-3}}(-1,-1)/\!/SL(2)$$ with
linearization
\[ \pi^*\cO_{\PP^7\times\PP^{2r-3}}(1,\lambda)\otimes \cO_{\cO(-1,1)}(-\epsilon Z)
\]
where $Z\cong \PP^7\times\PP^{2r-3}$ is the zero section as we vary
$\lambda$ from $1^-$ to $\infty$. As above, the GIT quotient varies
only at $\lambda=1$ and $\lambda=3$.
\begin{enumerate}\item
The wall crossing at $\lambda=1$ takes place over the $\CC^*$-fixed
point component $B_1$, which is the restriction of the line bundle
$\cO(-1,-1)$ to the flip base $\PP^1\times\PP^{r-2}$ for $\tilde
\Gamma^1$ at $\lambda=1$ above. The flip at $\lambda=1$ is the
composition
\[
\tilde U^1_1=\cO_{\PP^7\times
\PP^{2r-3}}(-1,-1)/\!/_{\lambda=1^-}SL(2) \longleftarrow \tilde
U_2^1 \longrightarrow \cO_{\PP^7\times
\PP^{2r-3}}(-1,-1)/\!/_{\lambda=2}SL(2)
\]
of the smooth blow-up along a $\PP^2_{(1,2,2)}$-bundle over $B_1$
and the weighted blow-up along a $\PP^{r-1}$-bundle over $B_1$.
\item The wall crossing at $\lambda=3$ takes place over the
$\CC^*$-fixed component $B_2\cong \PP^1\times \PP^{r-2}$, which is
the zero section of the restriction of the line bundle $\cO(-1,-1)$
to the flip base $\PP^1\times \PP^{r-2}$ for $\tilde\Gamma^1$ at
$\lambda=3$ above. The flip at $\lambda=3$ is the composition
\[
\cO_{\PP^7\times \PP^{2r-3}}(-1,-1)/\!/_{\lambda=2}SL(2)
\longleftarrow \tilde U_4^1 \longrightarrow \cO_{\PP^7\times
\PP^{2r-3}}(-1,-1)/\!/_{\lambda=4}SL(2)=\tilde U_5^1
\]
is the composition of the smooth blow-up along a
$\PP^4_{(1,2,2,3,3)}$-bundle over $B_2$ and the weighted blow-up
along a $\PP^{r-2}$-bundle over $B_2$ with weights $(1,2,2,3,3)$ on
each normal space.
\end{enumerate}
In summary, we have the following diagram of blow-ups for $\tilde
U^1_j$:
\begin{equation}\lab{diag4-c1}
\xymatrix{ &&&\tilde U^1_3\ar[dl] \ar[dr]
\\
&&\tilde U^1_2\ar[dl]  \ar[dr] && \tilde U^1_4\ar[dl] \ar[dr]
\\
&\tilde U_1^1\ar[dl]  &&\cO_{\PP^7\times
\PP^{2r-3}}(-1,-1)/\!/_{\lambda=2}SL(2) && \tilde U_5^1\ar[dr] \\
\tilde U^1 &&&&&& \tilde U_6^1 }
\end{equation}
Here $\tilde U^1_3$ is the fiber product of $\tilde U^1_2$ and
$\tilde U^1_4$ over $\cO_{\PP^7\times
\PP^{2r-3}}(-1,-1)/\!/_{\lambda=2}SL(2)$. As the blow-up centers in
$\cO_{\PP^7\times \PP^{2r-3}}(-1,-1)/\!/_{\lambda=2}SL(2)$ are
transverse, the two maps from $\tilde U^1_3$ to $\tilde U^1_2$ and
$\tilde U^1_4$ are blow-ups.

Let $U^1_j\subset\bM_j$, $j=1,2,3$ be the inverse image of the
neighborhood $U^1$ of $\Gamma^1$. Here $U^1$ is a bundle over
$Gr(2,r+1)$ with fiber $\tilde U^1$.  From our construction, it is
easy to check that the restrictions of the blow-ups
$$\bM_3\lra \bM_2\lra\bM_1\lra\bM$$ to $U^1$ coincide with the
three blow-ups on the left side of \eqref{diag4-c1}. If we can
check that the first blow-down $\tilde U^1_3\to \tilde U^1_4$ is
extended to a global contraction map $p_4:\bM_3\to \bM_4$ along
$\Gamma^2_3$, then simply by gluing with
$\bM_4-\Gamma^1_4\cup\Gamma^3_4$ where
$\Gamma^i_4=p_4(\Gamma^i_3)$, the blow-downs
$$\tilde U^1_4\lra \tilde U^1_5\lra \tilde U^1_6$$ extends to a
global maps
\[ \bM_4\lra \bM_5\lra\bM_6\]
which contracts along the proper transform $\Gamma^3_4$ of
$\Gamma^3_3$ and then along the proper transform $\Gamma^1_5$ of
$\Gamma^1_3$, because $\Gamma^1_j$ and $\Gamma^3_j$ are both
contained in $U^1_j$.

\medskip

So let us now study the blow-ups of $\Gamma^2$. Recall that
$$\Gamma^2=\Theta^2/SL(2)$$ where $\Theta^2$ is a $\PP^{r-1}$-bundle
over a bundle over $Gr(2,r+1)$ with fiber
\[
[\PP^1\times \PP(\Sym^2(\CC^2)\otimes \CC^2)]^s.
\]
As before, we fix $L\in Gr(2,r+1)$ to simplify our notation because
everything takes place over $Gr(2,r+1)$ in a neighborhood of
$\Theta^2$. Consider the variation of the GIT quotient
\[
\PP^1\times\PP(\Sym^2(\CC^2)\otimes \CC^2)\git_{\cO(1,\lambda)}
SL(2)
\]
as we vary $\lambda$ from $1$ to $0^+$. It is again an elementary
exercise of GIT variation (\cite{Thad,DH}) to see that the quotient
varies only at $\lambda=1/2$ but there are no stable points if
$\lambda<1/2$. The variation at $\lambda=1/2$ tells us that
\[
\PP^1\times\PP(\Sym^2(\CC^2)\otimes \CC^2)\git_{\cO(1,1)} SL(2)
\]
is a $\PP^2_{(1,2,2)}$-bundle over $\PP^1=L$. Since the
$\PP^1$-bundle over $Gr(2,r+1)$ is a $\PP^{r-1}$-bundle over
$\PP^r$, $\Gamma^2$ is a $\PP^2_{(1,2,2)}$-bundle over a
$(\PP^{r-1}\times \PP^{r-1})$-bundle over $\PP^r$. Obviously, the
$(\PP^{r-1}\times \PP^{r-1})$-bundle over $\PP^r$ parameterizes
(ordered) pairs of lines meeting at a point while $\PP^2_{(1,2,2)}$
parameterizes double coverings for the first (or second) line.

We can easily keep track of $\Gamma^2$ through the blow-ups
$p_1,p_2,p_3$. Firstly, $\Gamma_1^2$ is just the blow-up of
$(\PP^{r-1}\times \PP^{r-1})$ along the diagonal. Secondly,
$\Gamma_2^2$ is a $\PP^{r-1}$-bundle over $\Gamma_1^2$. From the
proof of Lemma \ref{lem5}, we see that the normal bundle to
$\Theta^2_1$ is independent of the fiber $\PP^2_{(1,2,2)}$ and
hence $\Gamma_2^2$ is in fact a $\PP^2_{(1,2,2)}\times
\PP^{r-1}$-bundle over a $bl_{\PP^{r-1}}(\PP^{r-1}\times
\PP^{r-1})$-bundle over $\PP^r$. Finally $\Gamma^2_3$ replaces
$\PP^2_{(1,2,2)}\times \PP^{r-1}$ by $\PP^2_{(1,2,2)}\times
bl_{\PP^1}\PP^{r-1}$. We claim that the $\PP^2_{(1,2,2)}$ fibers
of $\Gamma^2_3$ can be contracted to give us a map $\bM_3\to
\bM_4$. Indeed, this claim easily follows from the following two
observations. Firstly, from our description of the map $\tilde
U^1_3\to \tilde U^1_4$, a neighborhood of $U^1_3\cap\Gamma^2_3$ is
a $\cO_{\PP^2_{(1,2,2)}}(-1)$-bundle over $U^1_3\cap\Gamma^2_3$
and hence can be contracted to a $\CC^3$ bundle. Secondly, any
pair of distinct intersecting lines in $\PP^r$ can be sent to
another pair of distinct intersecting lines by the action of
$GL(r+1)$ and hence an analytic neighborhood of any point in
$\Gamma^2_3$ is isomorphic to an analytic neighborhood for a point
in $U^1_3\cap\Gamma^2_3$. Since the action of an element in
$GL(r+1)$ preserves the blow-down $U^1_3\to U^1_4$, we obtain the
desired map $p_4:\bM_3\to\bM_4$ which is a weighted blow-up along
$\Gamma^2_4$. Therefore, we have analytic maps
$\bM_3\to\bM_4\to\bM_5\to\bM_6$ contracting the divisors
$\Gamma^2_3, \Gamma^3_4,\Gamma^1_5$ respectively.

Finally from our construction of the stable sheaves in the previous
subsection, it is an easy but tedious exercise to check that
set-theoretically the map $\Phi_3:\bM_3\to \bS$ factors through the
maps $\bM_3\to\bM_4\to\bM_5\to\bM_6$ and hence we obtain an analytic
map
$$\Phi_6:\bM_6\lra \bS.$$ Furthermore, it is easy to see that
$\Phi_6$ is bijective. Therefore, we obtain the following by
Zariski's main theorem.
\begin{theo}\lab{mainthm}
$\bS$ is obtained from $\bM$ by blowing up along $\Gamma^1$,
$\Gamma^2_1$, $\Gamma^3_2$ and then blowing down along $\Gamma^2_3$,
$\Gamma^3_4$, $\Gamma^1_5$.
\end{theo}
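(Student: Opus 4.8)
The plan is to promote the birational map $\Phi:\bM\dashrightarrow\bS$, $f\mapsto f_*\cO_C$, to a morphism after a controlled sequence of three blow-ups, and then to show that the resulting morphism descends through three blow-downs to an isomorphism. By the stability lemmas above, the indeterminacy locus of $\Phi$ is exactly $\Gamma^1\cup\Gamma^2$, the loci of stable maps with linear and with bilinear image, and Corollary \ref{cor-b3} provides the smooth GIT models $\Theta^1,\Theta^2$ of these centers together with the normal bundle of $\Theta^1$ in $Q_0$. First I would blow up $\bM$ along $\Gamma^1$ and perform the elementary modification of $\cE_0$ along the exceptional divisor with respect to its destabilizing subsheaf; by Lemma \ref{lem4} the result $\cE_1$ is a flat family whose unstable locus is $\Gamma^2_1\cup\Gamma^3_1$. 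Blowing up again along $\Gamma^2_1$ and modifying (Lemma \ref{lem5}) gives $\cE_2$, whose unstable locus is the single divisor $\Gamma^3_2$; a third blow-up along $\Gamma^3_2$ and one further modification produces, by Lemma \ref{lem6}, a genuine family $\cE_3$ of stable sheaves, hence a morphism $\Phi_3:\bM_3\to\bS$.

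It then remains to contract, in order, the divisors $\Gamma^2_3$, $\Gamma^3_4$, $\Gamma^1_5$ and to verify that $\Phi_3$ factors through these contractions. The key technical device is the local model near $\Gamma^1$: since the normal bundle of $\Theta^1$ restricted to a $\PP^7$-fiber is $\cO_{\PP^7}(-1)^{\oplus 2r-2}=\Ext^1(\cO_L,\cO_L(-1))^{\oplus 2}\otimes\cO(-1)$, an analytic neighborhood $U^1$ of $\Gamma^1$ is a bundle over $Gr(2,r+1)$ with fiber $\cO_{\PP^7}(-1)^{\oplus 2r-2}/\!/SL(2)$, and the three blow-ups restricted to $U^1$ are realized by variation of the GIT quotients $\PP^7\times\PP^{2r-3}/\!/_{\cO(1,\lambda)}SL(2)$ and $\cO_{\PP^7\times\PP^{2r-3}}(-1,-1)/\!/_{\cO(1,\lambda)}SL(2)$ as $\lambda$ runs from $1^-$ to $\infty$, with walls only at $\lambda=1$ and $\lambda=3$. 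An elementary Hilbert--Mumford computation identifies each wall-crossing as a flip of the expected type — a smooth blow-up along a weighted projective bundle over $\PP^1\times\PP^{r-2}$ followed by a weighted blow-down — which exhibits $\Gamma^2_3$, $\Gamma^3_4$, $\Gamma^1_5$ as (weighted) projective bundles carrying exactly the normal bundles needed for contraction and identifies the contracted quotients. Because $GL(r+1)$ acts transitively on ordered pairs of intersecting lines, every point of $\Gamma^2_3$ has an analytic neighborhood isomorphic to one meeting $U^1_3$, so the fiberwise contraction of the $\PP^2_{(1,2,2)}$-fibers glues to a global map $p_4:\bM_3\to\bM_4$; gluing the two further blow-downs $\tilde U^1_4\to\tilde U^1_5\to\tilde U^1_6$ with the identity away from the relevant divisors yields $\bM_4\to\bM_5\to\bM_6$ contracting $\Gamma^3_4$ and then $\Gamma^1_5$.

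To finish, I would use the explicit description of $\cE_3|_{\PP^r\times\{y\}}$ over the exceptional divisors — the sheaves computed locally in the examples and remarks above — to see that $\Phi_3$ is constant along each contracted fiber, so that it factors as $\bM_3\to\bM_6\mapright{\Phi_6}\bS$, and a direct check over the finitely many types of boundary points shows $\Phi_6$ is bijective. Since $\bM_6$ is normal (obtained from the smooth $\bM_3$ by contracting projective bundles) and $\bS$ is reduced, Zariski's main theorem upgrades $\Phi_6$ to an isomorphism, and the generalized Riemann existence theorem makes the a priori only analytic contractions and spaces algebraic. I expect the hard part to be the middle step: showing that the global blow-down maps genuinely exist and that the loci $\Gamma^1_j,\Gamma^2_j,\Gamma^3_j$ meet transversely exactly as the variation-of-GIT picture predicts, so that the fiber products $\tilde\Gamma^1_3$ and $\tilde U^1_3$ are again blow-ups and the three-step contraction $\bM_3\to\bM_6$ is well defined.
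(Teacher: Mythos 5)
Your proposal follows essentially the same route as the paper: resolve the indeterminacy of $\Phi$ by the three blow-ups along $\Gamma^1$, $\Gamma^2_1$, $\Gamma^3_2$ with elementary modifications (Lemmas \ref{lem4}--\ref{lem6}), analyze the neighborhood of $\Gamma^1$ by variation of GIT quotients with walls at $\lambda=1,3$, globalize the contraction of $\Gamma^2_3$ using transitivity of $GL(r+1)$ on pairs of intersecting lines, and conclude via bijectivity of $\Phi_6$ and Zariski's main theorem. The approach and all key technical ingredients coincide with the paper's proof.
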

The following diagram summarizes the results of this paper.
\[
\xymatrix{ &&&\bM_3\ar[dl]_{\Gamma_2^3}\ar[dr]^{\Gamma_4^2}\\
&&\bM_2\ar[dl]_{\Gamma_1^2}&&\bM_4\ar[dr]^{\Gamma^3_5}\\
&\bM_1\ar[dl]_{\Gamma^1}&&&&\bM_5\ar[dr]^{\Gamma^1_6}&&\bH\ar[d]^{\bS(\PP\cU')}\\
\bM&&&&&&\bM_6\ar[r]^\cong & \bS }\] All the arrows are blow-ups and
the blow-up centers are indicated above the arrows.

\bigskip

\section{Cohomology calculation}
We use Theorem \ref{mainthm} to calculate the Betti numbers of
$\bS$, by the blow-up formula \cite{GH}. We define the Poincar\'e
polynomial of a topological space $X$ as
\[
P_t(X)=\sum_it^i\dim H^i(X).
\]
From \cite{KM}, the Poincar\'e polynomial of $\bM=\cM_0(\PP^r,3)$ is
\[
P_t(\bM)=\left(\frac{1-t^{2r+10}}{1-t^6}+2\frac{t^4-t^{2r+4}}{1-t^4}\right)
\frac{(1-t^{2r+2})}{(1-t^2)}\frac{(1-t^{2r+2})(1-t^{2r})}{(1-t^2)(1-t^4)}.
\]
The first blow-up adds
\[
P_t(\Gamma^1)\frac{t^2-t^{4r-4}}{1-t^2}=(1+t^2+2t^4+t^6+t^8)
\frac{1-t^{2r+2}}{1-t^2}\frac{1-t^{2r}}{1-t^4}\frac{t^2-t^{4r-4}}{1-t^2}.
\]
The second blow-up adds
\[
P_t(\Gamma^2_1)\frac{t^2-t^{2r}}{1-t^2}=(1+t^2+t^4)
\frac{1-t^{2r}}{1-t^2}\left(
\frac{1-t^{2r}}{1-t^2}+\frac{t^2-t^{2r-2}}{1-t^2}
 \right) \frac{1-t^{2r+2}}{1-t^2}\frac{t^2-t^{2r}}{1-t^2}.
\]
The third blow-up adds
\[
P_t(\Gamma^3_2)\frac{t^2-t^{2r-2}}{1-t^2}=\left(
(1+t^2)(1+t^2+2t^4+t^6+t^8)+t^2(1+t^2)(1+t^2+t^4)\right)\]
\[
\cdot\frac{1-t^{2r-2}}{1-t^2}\frac{1-t^{2r+2}}{1-t^2}\frac{1-t^{2r}}{1-t^4}\frac{t^2-t^{2r-2}}{1-t^2}.
\]
The first blow-down subtracts
\[
P_t(\Gamma^2_4)\cdot (t^2+t^4)=\left[\frac{1-t^{2r}}{1-t^2}
\left(\frac{1-t^{2r}}{1-t^2}+\frac{t^2-t^{2r-2}}{1-t^2}
 \right)+(1+t^2)\frac{1-t^{2r-2}}{1-t^2}\frac{t^2-t^{2r-2}}{1-t^2}\right]\]
\[ \cdot \frac{1-t^{2r}}{1-t^2}\frac{1-t^{2r+2}}{1-t^2} (t^2+t^4).
\]
The second blow-down subtracts
\[
P_t(\Gamma^3_5)\frac{t^2-t^{10}}{1-t^2}=
(1+t^2)\left(\frac{1-t^{2r-2}}{1-t^2}\right)^2
\frac{1-t^{2r+2}}{1-t^2}\frac{1-t^{2r}}{1-t^4}\frac{t^2-t^{10}}{1-t^2}.
\]
The third blow-down subtracts
\[
P_t(\Gamma^1_6)\frac{t^2-t^{16}}{1-t^2}=
\frac{1-t^{2r-2}}{1-t^2}\frac{1-t^{2r-4}}{1-t^4}
\frac{1-t^{2r+2}}{1-t^2}\frac{1-t^{2r}}{1-t^4}\frac{t^2-t^{16}}{1-t^2}.
\]

In summary, we obtain
\begin{theo}
The Poincar\'e polynomial of $\bS$ is
\[
\left(\frac{1-t^{2r+10}}{1-t^6}+2\frac{t^4-t^{2r+4}}{1-t^4}\right)
\frac{(1-t^{2r+2})}{(1-t^2)}\frac{(1-t^{2r+2})(1-t^{2r})}{(1-t^2)(1-t^4)}
\]
\[ + (1+t^2+2t^4+t^6+t^8)
\frac{1-t^{2r+2}}{1-t^2}\frac{1-t^{2r}}{1-t^4}\frac{t^2-t^{4r-4}}{1-t^2}
\]
\[
+ (1+t^2+t^4) \frac{1-t^{2r}}{1-t^2}\left(
\frac{1-t^{2r}}{1-t^2}+\frac{t^2-t^{2r-2}}{1-t^2}
 \right) \frac{1-t^{2r+2}}{1-t^2}\frac{t^2-t^{2r}}{1-t^2}
\]
\[
+ \left( (1+t^2)(1+t^2+2t^4+t^6+t^8)+t^2(1+t^2)(1+t^2+t^4)\right)\]
\[
\cdot\frac{1-t^{2r-2}}{1-t^2}\frac{1-t^{2r+2}}{1-t^2}\frac{1-t^{2r}}{1-t^4}\frac{t^2-t^{2r-2}}{1-t^2}
\]
\[
- \left[\frac{1-t^{2r}}{1-t^2}
\left(\frac{1-t^{2r}}{1-t^2}+\frac{t^2-t^{2r-2}}{1-t^2}
 \right)+(1+t^2)\frac{1-t^{2r-2}}{1-t^2}\frac{t^2-t^{2r-2}}{1-t^2}\right]\]
\[ \cdot \frac{1-t^{2r}}{1-t^2}\frac{1-t^{2r+2}}{1-t^2} (t^2+t^4)
\]
\[
- (1+t^2)\left(\frac{1-t^{2r-2}}{1-t^2}\right)^2
\frac{1-t^{2r+2}}{1-t^2}\frac{1-t^{2r}}{1-t^4}\frac{t^2-t^{10}}{1-t^2}
\]
\[
-\frac{1-t^{2r-2}}{1-t^2}\frac{1-t^{2r-4}}{1-t^4}
\frac{1-t^{2r+2}}{1-t^2}\frac{1-t^{2r}}{1-t^4}\frac{t^2-t^{16}}{1-t^2}.
\]
\end{theo}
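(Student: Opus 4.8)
The plan is to read the formula off directly from Theorem \ref{mainthm} by applying the blow-up formula for rational Betti numbers \cite{GH} six times. By that theorem, $\bS$ is reached from $\bM=\cM_0(\PP^r,3)$ by the three blow-ups $\bM\leftarrow\bM_1\leftarrow\bM_2\leftarrow\bM_3$ along $\Gamma^1$, $\Gamma^2_1$, $\Gamma^3_2$, followed by the three blow-downs \eqref{eq4.9} which contract the divisors $\Gamma^2_3$, $\Gamma^3_4$, $\Gamma^1_5$ onto centers I will call $\Gamma^2_4$, $\Gamma^3_5$, $\Gamma^1_6$. Starting from $P_t(\bM)$ (quoted above from \cite{KM}), I would add the correction term at each of the three blow-ups and subtract it at each of the three blow-downs, so that $P_t(\bS)=P_t(\bM)+(\text{three blow-up terms})-(\text{three blow-down terms})$; the content of the theorem is that this telescoping sum is the displayed expression, i.e.\ that the six corrections are exactly the six contributions ``the first blow-up adds\,\ldots'' through ``the third blow-down subtracts\,\ldots'' recorded just above the statement.

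Two preliminaries are needed. First, the blow-up formula $P_t(Bl_Z X)=P_t(X)+P_t(Z)\frac{t^2-t^{2c}}{1-t^2}$, $c=\operatorname{codim}(Z,X)$, must be applied rationally on our varieties (which carry only finite quotient singularities), and in particular to the \emph{weighted} blow-downs $\pi_4,\pi_5,\pi_6$: there the contracted divisor is a weighted projective bundle over its image, and since $\PP^n_{(w_0,\ldots,w_n)}$ has the rational cohomology of $\PP^n$, the same formula applies with $c$ one more than the relative dimension of that bundle. Thus the relevant codimensions are $2r-2$ for $\Gamma^1$ (Corollary \ref{cor-b3}), $r$ for $\Gamma^2_1$, $r-1$ for $\Gamma^3_2$, and $3,5,8$ for $\Gamma^2_4,\Gamma^3_5,\Gamma^1_6$, the contracted divisors having $\PP^2_{(1,2,2)}$-, $\PP^4_{(1,2,2,3,3)}$-, $\PP^7$-fibers by \S4.4. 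Second, I would repeatedly use that a (Zariski-locally trivial) (weighted) projective bundle of relative dimension $d$ over $B$ has Poincar\'e polynomial $P_t(B)(1+t^2+\cdots+t^{2d})$ by Leray--Hirsch.

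The substance of the proof is then computing the Poincar\'e polynomials of the six centers from the (iterated, weighted) projective-bundle descriptions assembled in \S4.1--\S4.4. Several are immediate: $\Gamma^1=\cM_0(\PP\cU,3)$ is a $\cM_0(\PP^1,3)$-bundle over $Gr(2,r+1)$ with $P_t(\cM_0(\PP^1,3))=1+t^2+2t^4+t^6+t^8$ and $P_t(Gr(2,r+1))=\frac{(1-t^{2r+2})(1-t^{2r})}{(1-t^2)(1-t^4)}$; $\Gamma^2_1$ is a $\PP^2_{(1,2,2)}$-bundle over the blow-up of $\PP^{r-1}\times\PP^{r-1}$ along the diagonal, all fibered over $\PP^r$; $\Gamma^3_5$ is a $(\PP^1\times\PP^{r-2}\times\PP^{r-2})$-bundle over $Gr(2,r+1)$; and $\Gamma^1_6$ is a $Gr(2,r-1)$-bundle over $Gr(2,r+1)$, the $Gr(2,r-1)$ recording a $\PP^3$ through the fixed line. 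The delicate ones are $\Gamma^3_2$ --- the proper transform of $\Gamma^3$ after the three blow-ups --- and $\Gamma^2_4$ --- the image of $\Gamma^2_3$ under the first contraction --- which must be followed through the flips of diagram \eqref{diag4.1} and its $\Gamma^2$-analogue, using the blow-up formula internally (e.g.\ for the diagonal in $\PP^{r-1}\times\PP^{r-1}$ and for $\PP^1\subset\PP^{r-1}$). I expect this bookkeeping to be the main obstacle: one must pin down $\Gamma^2_j$ and $\Gamma^3_j$ precisely at each intermediate stage, and the naive ``product of the obvious bundles'' guess is not always correct once proper transforms intervene.

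Once all six center polynomials are in hand, multiplying each by $\frac{t^2-t^{2c}}{1-t^2}$ and assembling the telescoping sum yields the asserted formula. As a consistency check I would specialize to $r=3$: the expression collapses to a concrete polynomial in $t$, which should agree term-by-term with the Betti numbers of $\bS(\PP^3)$ computed by Ellingsrud, Piene and Stromme \cite{EPS}.
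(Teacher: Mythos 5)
Your proposal follows exactly the route of the paper: start from $P_t(\bM)$ as computed in \cite{KM}, apply the (rational) blow-up formula once for each of the three blow-ups and three blow-downs of Theorem \ref{mainthm}, read off the codimensions ($2r-2$, $r$, $r-1$ for the blow-ups and $3,5,8$ for the contractions of the $\PP^2_{(1,2,2)}$-, $\PP^4_{(1,2,2,3,3)}$- and $\PP^7$-bundles) and the Poincar\'e polynomials of the six centers from the bundle descriptions of \S4, and telescope. Your identifications of the centers (including $\Gamma^3_5$ as a $\PP^1\times\PP^{r-2}\times\PP^{r-2}$-bundle and $\Gamma^1_6$ as a $Gr(2,r-1)$-bundle over $Gr(2,r+1)$) agree with the factors appearing in the paper's six correction terms, so the proposal is correct and essentially identical to the paper's proof.
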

In particular, when $r=3$, we obtain
\[
P_t(\bH)=P_t(\bS)=1+2t^2+6t^4+10t^6+16t^8+19t^{10}+22t^{12}+19t^{14}+16t^{16}+10t^{18}
+6t^{20}+2t^{22}+t^{24}
\]
which coincides with the calculation in \cite{EPS}.


\bibliographystyle{amsplain}

\end{document}